\chardef\showllabel=0  
\chardef\refcheck=0    
\theoremstyle{plain}
\newtheorem{theo}{Theorem}[section]
\newtheorem{prop}[theo]{Proposition}
\newtheorem{lemm}[theo]{Lemma}
\newtheorem{defi}[theo]{Definition}
\theoremstyle{definition}
\newtheorem{rema}[theo]{Remark}
\DeclareMathOperator{\cnx}{div}
\DeclareMathOperator{\diff}{d}
\def\llabel#1{\notag}
\def\dt{\diff \! t}
\def\dx{\diff \! x}
\def\dy{\diff \! y}
\def\dydx{\diff \! y \diff \! x}
\def\defn{\mathrel{:=}}
\def\eps{\varepsilon}
\def\blA{\big\lVert}
\def\brA{\big\lVert}
\def\la{\left\lvert}
\def\lA{\left\lVert}
\def\ra{\right\lvert}
\def\rA{\right\lVert}
\def\le{\leq}
\def\mez{\frac{1}{2}}
\def\ra{\right\rvert}
\def\rA{\right\rVert}
\def\tdm{\frac{3}{2}}
\def\xN{\mathbf{N}}
\def\xR{\mathbf{R}}
\def\xT{\mathbf{T}}
\def\xZ{\mathbf{Z}}
\def\ba{\begin{align}}
\def\be{\begin{equation}}
\def\defn{\mathrel{:=}}
\def\dt{\diff \! t}
\def\dx{\diff \! x}
\def\dy{\diff \! y}
\def\dydx{\diff \! y \diff \! x}
\def\e{\eqref}
\def\ee{\end{equation}}
\def\eps{\varepsilon}
\def\fract{\frac{\diff}{\dt}}
\def\la{\left\vert}
\def\lA{\left\Vert}
\def\le{\leq}
\def\mez{\frac{1}{2}}
\def\ra{\right\vert}
\def\rA{\right\Vert}
\def\xN{\mathbb{N}}
\def\xR{\mathbb{R}}
\def\xT{\mathbb{T}}
\def\xZ{\mathbb{Z}}
\def\partialx{\nabla}
\def\restr#1{\arrowvert_{#1}}
\def\tphi{\tilde\phi}
\def\lec{\lesssim}
\def\colb{\color{black}}
\definecolor{coloraaaa}{rgb}{0.8,0.8,0.8}
\definecolor{coloroooo}{rgb}{0.65,0.3,0.2}
\def\inon#1{\hbox{\ \ \ \ \ \ \ }\hbox{#1}}                
\def\inin#1{\inon{in~$#1$}}
\def\comma{ {\rm ,\qquad{}} }            
\def\supp{\mathop{\rm supp}\nolimits}    
\def\indeq{\qquad{}\!\!\!\!}                     
\numberwithin{equation}{section}
\title{Global-in-time weak solutions for an inviscid free surface fluid-structure problem without damping}
\author{Thomas Alazard}
\address{Thomas Alazard, Universit\'e Paris-Saclay, ENS Paris-Saclay, CNRS, Centre Borelli UMR9010, avenue des Sciences, F-91190 Gif-sur-Yvette France.}
\author{Igor Kukavica}
\address{Igor Kukavica, Department of Mathematics, University of Southern California, Los Angeles, CA 90089}
\author{Amjad Tuffaha}
\address{Amjad, Tuffaha, Department of Mathematics and Statistics, American University of Sharjah, Sharjah, UAE}
\begin{document}

\maketitle

\begin{abstract}
We consider the
Cauchy problem
for an inviscid irrotational fluid on a domain with a free boundary governed
by a fourth order linear elasticity equation.
We first derive the Craig-Sulem-Zakharov formulation of the problem and then
establish the existence
of a global weak solution in two
space dimensions, in the general case without a damping term, for any initial data with finite energy.
\end{abstract}

\section{Introduction}

In this paper, we examine the Cauchy problem associated with the incompressible Euler equations with free surface. Traditionally, the two main models used to investigate the Cauchy problem involve either the constant pressure boundary condition or the surface tension boundary condition, establishing a connection between pressure and the mean curvature of the boundary. 
In this paper, we consider a different type of boundary condition, stemming from the consideration of an elastic interface governed by an Euler-Bernoulli equation.
This problem models the dynamics of an incompressible fluid on a moving domain, with its free boundary evolving according to a fourth-order linear evolution equation forced by the fluid pressure. This describes aeroelastic problems involving inviscid potential flow-structure interactions in the low Mach limit.

This problem has already attracted a lot of attention for viscous equations, assuming in addition that the fluid structure has a damping effect on the free surface elevation. Our main goal is to initiate a study of the case without viscosity and without damping as well. Specifically, even in the absence of any dissipative effects in the problem, we will establish the existence of global-in-time weak solutions in one space dimension. It is noteworthy that we address initial data of any size within the energy space.

Our analysis builds upon established tools developed for studying the Cauchy theory concerning the water-wave equations in Eulerian coordinates. The latter problem has a rich history, dating back notably to the works of Zakharov~\cite{Zakharov1968,Zakharov}, Craig-Sulem~\cite{CrSu}, and Lannes~\cite{LannesJAMS,LannesLivre}, 
who introduced an analysis centered around the Dirichlet-to-Neumann operator. More precisely, we shall use several results from \cite{ABZ1,ABZ3,ABZ-memoir,AM}. We adopt this perspective and consider a formulation that involves a set of equations reduced to the free surface. These equations encompass the displacement function $\eta$ and the trace $\psi$ of the potential flow variable on the free surface. These two variables are connected through the Dirichlet-to-Neumann operator, which describes how the interface moves, and additional terms that consider the dynamics of the interface.

We derive uniform energy estimates 
for Galerkin approximations of the system. The fourth order boundary condition 
allows the propagation of regularity to the interface velocity through the Rellich inequality satisfied by the Dirichlet-to-Neumann operator. Yet, the challenge arises when attempting to handle the limit in the nonlinear term in its classical form. To overcome this, we resort to a novel form 
of the nonlinear term, as introduced by Lokharu~(\cite{LokharuJFM2021}), to study Stokes waves. We will show that this form coincides with the classical form when the solutions are sufficiently regular.

Our approach provides the first existence result of 
global-in-time weak solutions for inviscid free surface flow-structure interaction problem without any damping.

\subsection{The model}
Consider a fluid domain $\Omega$ whose boundary $\Sigma=\partial\Omega$ is a free surface given as the graph of 
some function~$\eta=\eta(t,x)$ which is periodic in $x$, so that at time $t$ we have
\begin{align*}
  \begin{split}
\Omega(t)&=\{ (x,y) \in \xT^d\times \xR\,;\, y < \eta(t,x)\},\\
\Sigma(t)&= \{ (x,y) \in \xT^d\times \xR\,;\, y = \eta(t,x)\},
  \end{split}
  \end{align*}
where $\xT^d=(\xR/2\pi\xZ)^{d}$
is a $d$-dimensional torus. 
We are interested in a problem from fluid mechanics 
where the time evolution of $\Sigma$ is dictated by two equations relating $\eta$ to the fluid velocity field 
$v\colon \Omega\to \xR^{d+1}$ and pressure $P\colon\Omega\to \xR$. 

On the one hand, we assume that the free surface is transported by the flow of a fluid located underneath the free surface. This means that $\eta$ satisfies a kinematic equation
\begin{equation*}
\partial_t \eta = \sqrt{1+ |\nabla \eta |^2} \, v\cdot n \qquad\text{on } \Sigma
,
\end{equation*}
where $n=\frac{1}{\sqrt{1+|\nabla\eta|^2}}\left(\begin{smallmatrix}-\nabla\eta \\ 1\end{smallmatrix}\right)$ is the outward unit 
normal to the free surface. On the other hand, 
we assume that $\eta$ satisfies the plate equation 
\begin{equation}
\partial_t^2\eta +\Delta^2\eta=f,
\label{EQ110}\end{equation}
where the source term $f$ satisfies
\begin{equation*}
f(t,x)=P(t,x,\eta(t,x)),
\end{equation*}
that is, $f$ is given by the evaluation 
at the free surface $\Sigma$ of the pressure of the fluid located underneath~$\Sigma$. 

Then one needs an equation relating $v$ and~$P$. 
This problem has already attracted a lot of attention in the case
where $v$ and $P$ satisfy various equations from fluid dynamics with
dissipation, as is the Navier-Stokes equations.
The latter problem has received a lot of attention. Since it involves different boundary conditions and different notions of solutions, and  since we will use completely different tools, we only refer the reader to the works \cite{MR2166981,MR2349865,MR2644917,MR3466847,GHL,MR3604365,L, LR, MRR, MC1,MR4540753,MC2,MR4039524,MR4450291,schwarzacher-su2023regularity} as well as, for related problems, to the survey papers~\cite{MR2922368,MR4188809}. 
Also, it is worth noting that the plate equation~\e{EQ110} does not incorporate any damping term.

 The goal of the present paper is to consider the case when the
 velocity satisfies the incompressible Euler equations
\begin{equation}\label{WW}
\left\{ \begin{aligned}
&\partial_t v+ v\cdot \nabla_{x,y} v +  \nabla_{x,y} (P +g y)=0 \quad &&\text{in } \Omega, \\[0.5ex]
&\cnx_{x,y} v =0 &&\text{in } \Omega,
\end{aligned} \right. 
\end{equation} 
where 
$g\ge 0$ is gravity constant (in particular 
we can assume that $g=0$) 
and $\nabla_{x,y}=(\nabla,\partial_y)$ with 
$\nabla=(\partial_{x_1},\ldots,\partial_{x_d})$. 
Local-in-time strong solutions for the incompressible Euler equations coupled with elastic moving interface have only been considered recently in~\cite{MR4596023}.
Our aim is to study the existence of global-in-time weak solutions.

We suppose that the flow is irrotational. Then  we have $v=\nabla_{x,y}\phi$ for some 
potential function 
$\phi=\phi(t,x,y)$ satisfying
\begin{equation}\llabel{t5}
\left\{
\begin{aligned}
&\partial_{t} \phi +\mez \la \nabla_{x,y}\phi\ra^2 +P +g y = 0 \quad&&\text{in }\Omega,\\
&\Delta_{x,y}\phi=0\quad&&\text{in }\Omega, 
\end{aligned}
\right.
\end{equation}
where $\Delta_{x,y}=\Delta+\partial_y^2$.

\subsection{Reformulation}
To investigate the existence of global-in-time weak solutions, we will utilize tools developed for the study of the Cauchy problem for the water-waves equation, 
adopting an approach akin to the classical paper of Zakharov~\cite{Zakharov1968,Zakharov}.
We  provide the rest of definitions and 
classical results in Section~\ref{S:2}, ensuring that the paper is self-contained.

Instead of working with the system in the variables $(\eta, \phi)$, we focus on the unknowns $(\eta, \psi)$, where
\begin{equation*}
\psi(t,x) = \phi(t,x,\eta(t,x))
\end{equation*}
represents the trace of $\phi$ on the free surface~$\Sigma$. A key observation is that we can subsequently derive the properties of $\phi$ and $v$, given that the function $\phi$ is harmonic in~$\Omega$.
Importantly, this reformulation reduces the problem to two unknowns, both functions of time $t$ and the horizontal space variable~$x$.

The Dirichlet-to-Neumann operator, denoted by $G(\eta)$, is defined by
\begin{align*}
  \begin{split}
G(\eta)\psi =
\sqrt{1+|\partialx\eta|^2}\,
\partial _n \phi\arrowvert_{y=\eta},
  \end{split}
\end{align*}
where $\partial_n=n\cdot\nabla_{x,y}$
is the normal derivative.
We refer to Section~\ref{S:2} for the rigorous definition.

\begin{prop}\label{IpropN}
Consider two functions $\eta,\psi \in C^\infty(\xT^d)$.
Introduce the linear operator $L$, defined by
\begin{equation*}
L\eta=g \eta+\Delta^2\eta.
\end{equation*}
The fluid-structure problem reads
  \begin{equation}
   \left\{
   \begin{aligned}
   &\partial_t \eta = G(\eta)\psi,\\
   &\partial_t \psi +\partial_t^2\eta+N(\eta,\psi) +L\eta= 0,
   \end{aligned}
   \right.
   \label{IEQ28}
  \end{equation}
where $N(\eta,\psi)$ may be expressed as 
  \begin{equation}\label{I5}
   N(\eta,\psi) =  \frac{1}{2} \vert \nabla_{x} \psi \vert^{2} -\frac{1}{2}  \frac{( G(\eta) \psi + \nabla_{x} \psi \cdot \nabla_{x} \eta)^2}{1+|\nabla\eta|^2}.
  \end{equation}
  \end{prop}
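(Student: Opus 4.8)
The plan is to derive \eqref{IEQ28} from the Zakharov/Craig–Sulem computation for the potential flow, reading off the plate equation \eqref{EQ110} and the Bernoulli equation at the free surface. First I would record the kinematic identity: since $\phi$ is harmonic in $\Omega(t)$ and $\psi(t,x)=\phi(t,x,\eta(t,x))$, the chain rule gives $\partial_t\psi=(\partial_t\phi)\eval+(\partial_y\phi)\eval\,\partial_t\eta$ and $\nabla_x\psi=(\nabla_{x}\phi)\eval+(\partial_y\phi)\eval\,\nabla_x\eta$. Solving this linear system for the boundary traces of $\nabla_{x,y}\phi$ in terms of $\nabla_x\psi$, $\nabla_x\eta$ and $G(\eta)\psi$ is the standard Craig–Sulem manipulation; it uses the defining relation $G(\eta)\psi=(\partial_y\phi)\eval-\nabla_x\eta\cdot(\nabla_x\phi)\eval$. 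From this one gets the classical formulas
\begin{equation*}
(\partial_y\phi)\eval=\frac{G(\eta)\psi+\nabla_x\eta\cdot\nabla_x\psi}{1+|\nabla_x\eta|^2},\qquad
(\nabla_x\phi)\eval=\nabla_x\psi-(\partial_y\phi)\eval\,\nabla_x\eta,
\end{equation*}
and hence $\partial_t\eta=(\partial_y\phi)\eval-\nabla_x\eta\cdot(\nabla_x\phi)\eval=G(\eta)\psi$, which is the first equation of \eqref{IEQ28}.

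Next I would evaluate the Bernoulli equation $\partial_t\phi+\tfrac12|\nabla_{x,y}\phi|^2+P+gy=0$ on $\Sigma(t)$, i.e. at $y=\eta(t,x)$, obtaining
\begin{equation*}
(\partial_t\phi)\eval+\tfrac12\bigl(|(\nabla_x\phi)\eval|^2+|(\partial_y\phi)\eval|^2\bigr)+P(t,x,\eta)+g\eta=0 .
\end{equation*}
Using $(\partial_t\phi)\eval=\partial_t\psi-(\partial_y\phi)\eval\,\partial_t\eta=\partial_t\psi-(\partial_y\phi)\eval\,G(\eta)\psi$ and substituting the two boundary-trace formulas above, a short algebraic simplification turns $\tfrac12(|(\nabla_x\phi)\eval|^2+|(\partial_y\phi)\eval|^2)-(\partial_y\phi)\eval\,G(\eta)\psi$ into exactly $N(\eta,\psi)$ as written in \eqref{I5}; indeed, writing $B=(\partial_y\phi)\eval$ one checks $|(\nabla_x\phi)\eval|^2+B^2=|\nabla_x\psi|^2-2B\,\nabla_x\eta\cdot\nabla_x\psi+B^2|\nabla_x\eta|^2+B^2=|\nabla_x\psi|^2-2B(G(\eta)\psi+\nabla_x\eta\cdot\nabla_x\psi)+B^2(1+|\nabla_x\eta|^2)+2B\,G(\eta)\psi$, and since $B^2(1+|\nabla_x\eta|^2)=B(G(\eta)\psi+\nabla_x\eta\cdot\nabla_x\psi)$ this collapses to $|\nabla_x\psi|^2-B(G(\eta)\psi+\nabla_x\eta\cdot\nabla_x\psi)+2B\,G(\eta)\psi$, so that halving and subtracting $B\,G(\eta)\psi$ yields $\tfrac12|\nabla_x\psi|^2-\tfrac12 B(G(\eta)\psi+\nabla_x\eta\cdot\nabla_x\psi)=\tfrac12|\nabla_x\psi|^2-\tfrac12\frac{(G(\eta)\psi+\nabla_x\eta\cdot\nabla_x\psi)^2}{1+|\nabla_x\eta|^2}$. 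Therefore the evaluated Bernoulli relation becomes $\partial_t\psi+N(\eta,\psi)+g\eta+f=0$ with $f=P(t,x,\eta)$.

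Finally I would substitute the plate equation \eqref{EQ110}, namely $f=\partial_t^2\eta+\Delta^2\eta$, into the last identity, which gives $\partial_t\psi+\partial_t^2\eta+N(\eta,\psi)+g\eta+\Delta^2\eta=0$; recognizing $L\eta=g\eta+\Delta^2\eta$ yields the second equation of \eqref{IEQ28}. For the smoothness hypotheses $\eta,\psi\in C^\infty(\xT^d)$ all these manipulations are justified: $G(\eta)$ and the harmonic extension are well defined and smooth in the relevant variables by the results recalled in Section~\ref{S:2}. The only genuinely delicate point, and the one I expect to require the most care, is the rigorous justification of the boundary chain-rule identities and the existence/regularity of the velocity potential $\phi$ underlying the reduction — i.e. going from the pointwise Euler/Bernoulli formulation on $\Omega(t)$ to the closed system in $(\eta,\psi)$ on $\xT^d$; the purely algebraic identity producing \eqref{I5} is routine once the traces are in hand.
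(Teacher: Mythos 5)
Your proposal is correct and follows essentially the same route as the paper's proof: both evaluate the Bernoulli equation at the free surface, use the chain-rule identities for the boundary traces of $\nabla_{x,y}\phi$ together with the kinematic relation, and simplify the resulting quadratic terms to reach \eqref{I5}. The only cosmetic difference is that the paper first records the intermediate form $N=\tfrac12|V|^2-\tfrac12 B^2+B(V\cdot\nabla\eta)$ (by substituting $\partial_t\eta=B-V\cdot\nabla\eta$) before rewriting it via the $B,V$ identities, whereas you substitute $\partial_t\eta=G(\eta)\psi$ and carry out the algebra in one stroke.
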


To establish the global existence in time for \e{IEQ28}, we will introduce a Galerkin-type approximation. We aim to prove uniform estimates for the solutions to these approximations by employing a Hamiltonian-type energy estimate. However, it turns out that passing through the limit in the nonlinear term $N(\eta,\psi)$ in its classical form~\e{I5} is not tractable. In response, we adopt an alternative approach, relying on a novel form of the nonlinear term, inspired by the flow force function introduced by Lokharu~(\cite{LokharuJFM2021}) to study Stokes waves. We will prove the following proposition, which gives an 
alternative form of $N(\eta,\psi)$ and states that it coincides with the classical form when solutions exhibit sufficient regularity.

\begin{prop}
\label{IP02}
Let $s>d/2+1$. For all $\eta\in H^{s+1/2}(\xT^d)$ and all 
$\psi\in H^{s}(\xT^d)$, there holds
\begin{equation}
   N(\eta,\psi)=\cnx \int_{-\infty}^\eta \partial_y\phi\nabla \phi\dy
,
   \label{IEQ31}
  \end{equation}
where $\phi=\phi(x,y)$ is the harmonic extension of $\psi$ given by Proposition~\ref{Lemma:decayinfty2} below.
\end{prop}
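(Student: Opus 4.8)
The plan is to establish the identity \eqref{IEQ31} by a direct computation using the harmonic extension $\phi$ and the defining boundary value problem it satisfies, and then to match the result against the classical expression \eqref{I5}. First I would expand the divergence on the right-hand side. Writing $F(x) = \int_{-\infty}^{\eta(x)} \partial_y \phi(x,y)\,\nabla\phi(x,y)\,\dy$, the chain rule gives a boundary contribution from differentiating the upper limit $\eta(x)$, namely $\partial_y\phi(x,\eta)\,\nabla\phi(x,\eta)\,\nabla\eta(x)$, plus the interior term $\int_{-\infty}^\eta \cnx(\partial_y\phi\,\nabla\phi)\,\dy$. In the interior term, $\cnx(\partial_y\phi\,\nabla\phi) = \nabla\partial_y\phi\cdot\nabla\phi + \partial_y\phi\,\Delta\phi$, and since $\phi$ is harmonic, $\Delta\phi = -\partial_y^2\phi$, so $\partial_y\phi\,\Delta\phi = -\partial_y\phi\,\partial_y^2\phi = -\tfrac12\partial_y(|\partial_y\phi|^2)$, while $\nabla\partial_y\phi\cdot\nabla\phi = \tfrac12\partial_y(|\nabla\phi|^2)$. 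Hence the interior integrand is a total $y$-derivative, $\tfrac12\partial_y\big(|\nabla\phi|^2 - |\partial_y\phi|^2\big)$, and integrating from $-\infty$ to $\eta$ yields $\tfrac12\big(|\nabla\phi|^2 - |\partial_y\phi|^2\big)\big|_{y=\eta}$, using the decay of $\nabla_{x,y}\phi$ at $y\to-\infty$ furnished by Proposition~\ref{Lemma:decayinfty2}.

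Next I would rewrite the traces at $y=\eta$ in terms of the surface quantities $\psi = \phi\eval$, $\nabla\psi = (\nabla\phi + \partial_y\phi\,\nabla\eta)\eval$, and $G(\eta)\psi = (\partial_y\phi - \nabla\phi\cdot\nabla\eta)\eval$. Solving these for the traces of $\nabla\phi$ and $\partial_y\phi$ at the surface gives the standard formulas
\begin{equation*}
\partial_y\phi\eval = \frac{G(\eta)\psi + \nabla\psi\cdot\nabla\eta}{1+|\nabla\eta|^2},
\qquad
\nabla\phi\eval = \nabla\psi - \big(\partial_y\phi\eval\big)\nabla\eta.
\end{equation*}
Substituting these into the boundary contribution $\partial_y\phi\,\nabla\phi\cdot\nabla\eta\eval$ and into $\tfrac12(|\nabla\phi|^2 - |\partial_y\phi|^2)\eval$, collecting terms, and simplifying should produce exactly $\tfrac12|\nabla\psi|^2 - \tfrac12\frac{(G(\eta)\psi + \nabla\psi\cdot\nabla\eta)^2}{1+|\nabla\eta|^2}$, which is $N(\eta,\psi)$ as given by \eqref{I5}. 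This is essentially the same algebraic identity that underlies the classical formula $G(\eta)\psi = \partial_t\eta$ and the Bernoulli relation, so the bookkeeping, while slightly tedious, is routine.

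The genuine analytic content — and the main obstacle — is making sense of the right-hand side of \eqref{IEQ31} and of every step above at the stated regularity $\eta \in H^{s+1/2}$, $\psi\in H^s$ with $s > d/2+1$, rather than for $C^\infty$ data. One must verify that the harmonic extension $\phi$ from Proposition~\ref{Lemma:decayinfty2} has enough regularity and decay so that $\partial_y\phi\,\nabla\phi$ is integrable in $y$ on $(-\infty,\eta(x)]$ with an integrable tail, that the resulting $F(x)$ lies in a space on which $\cnx$ acts (at least in the sense of distributions, with the identity then holding in, say, $H^{s-3/2}$ or after testing against smooth functions), and that the differentiation under the integral sign and the integration by parts in $y$ are justified. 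I would handle this by first proving the identity for smooth data, where all manipulations are legitimate, and then passing to the limit: approximate $(\eta,\psi)$ by smooth $(\eta_n,\psi_n)$ in the $H^{s+1/2}\times H^s$ topology, use the continuity of $\eta\mapsto\phi$ on the relevant elliptic-regularity scale (and of $G(\eta)$, as recalled in Section~\ref{S:2}) to pass to the limit in both sides, with the right-hand side converging in a negative-order Sobolev space. The flattening change of variables $y\mapsto y-\eta(x)$ that sends $\Omega$ to the fixed strip is the natural device for carrying out these estimates uniformly.
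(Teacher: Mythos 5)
Your proposal follows essentially the same route as the paper: the same formal computation (boundary term from the moving upper limit plus an interior integrand that harmonicity turns into a total $y$-derivative, yielding $\tfrac12(|V|^2-B^2)+B\,V\cdot\nabla\eta$, which matches $N(\eta,\psi)$ via the trace identities for $B$ and $V$), followed by justification for smooth data using the decay of $\nabla_{x,y}\phi$ and then a density argument at the stated regularity based on the flattening change of variables and the continuity/contraction estimates for the harmonic extension and the Dirichlet-to-Neumann operator. This is precisely the structure of the paper's proof, which carries out the smooth-case justification with explicit cutoffs in $y$ and the limit passage via Lemma~\ref{L01} and Propositions~\ref{P:DN2}--\ref{P:DN3}.
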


\subsection{Main theorem}\label{secm}
We now present our main result concerning the existence of global-in-time weak solutions in the two-dimensional case, specifically for $d=1$. First, we introduce a definition of a weak solution for the fluid-structure problem~\eqref{IEQ28}.
\begin{defi}
Let~$T>0$. 
A pair $(\eta,\psi)\in L^\infty((0,T);H^2(\xT)\times H^1(\xT))$ is a weak solution of the problem \eqref{IEQ28} 
on a time interval $[0,T)$
if the following three conditions hold:
\begin{enumerate}[(i)]
\item $\eta \in C([0,T);H^{2-\epsilon}(\xT))$, for every $\epsilon>0$,\\
\item $\psi\in C([0,T);H^{1-\epsilon}(\xT))$, for every $\epsilon>0$,\\
\item the equations \eqref{IEQ28} are satisfied in the sense of distributions
with
  \begin{equation}
   N(\eta,\psi)
   =\cnx \int_{-\infty}^\eta \partial_y\phi \, \partial_{x}\phi\dy
   ,
   \label{IEQ65}
  \end{equation}
where, at a given time $t\in (0,T)$, the function $\phi=\phi(t,x,y)$ is the harmonic extension of $\psi$ given by Proposition~\ref{Lemma:decayinfty2} below.
\end{enumerate}
\end{defi}

\begin{defi}
A pair $(\eta,\psi)\in L^\infty((0,+\infty);H^2(\xT)\times H^1(\xT))$ is a global weak-solution of the problem \eqref{IEQ28} if the above conditions hold on the time interval 
$[0,+\infty)$, or, equivalently, when the restriction to any time interval $[0,T)$ gives a weak solution on~$[0,T)$.
\end{defi}
\begin{rema}
As we will discuss in details later, note that, based on (i) and (ii), we have $\nabla_{x,y}\phi(t)
\in L^2(\Omega(t))$,
uniformly in time,
and thus the expression in \eqref{IEQ65} is well-defined as a distribution
(it is a divergence of an $L^{1}$ function).
\end{rema}

We can now state our main result, which asserts that for any initial data in the energy space, there exists a global-in-time weak solution.

\begin{theo}
\label{T1}
For any initial data $(\eta_0,\psi_0)\in H^2(\xT)\times H^1(\xT)$, 
there exists a global weak solution $(\eta,\psi)$ 
to the system \eqref{IEQ28} satisfying 
$(\eta,\psi)\arrowvert_{t=0}=(\eta_0,\psi_0)$.
\end{theo}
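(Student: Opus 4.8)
The plan is to construct the global weak solution as a limit of Galerkin approximations and then pass to the limit, with the key innovation being the use of the Lokharu-type form \eqref{IEQ65} of the nonlinear term $N(\eta,\psi)$ rather than the classical form \eqref{I5}. First, I would set up the approximation: for each $n\in\xN$ let $\Pi_n$ denote the projection onto Fourier modes $|k|\le n$, and consider the finite-dimensional ODE system obtained by projecting \eqref{IEQ28}, treating the second-order-in-time equation for $\eta$ together with the first-order equation for $\psi$. Since $\Pi_n$ commutes with $G(\eta_n)$ only approximately, some care is needed — one replaces $G(\eta)\psi$ and $N(\eta,\psi)$ by $\Pi_n G(\Pi_n\eta)\Pi_n\psi$ and $\Pi_n N(\Pi_n\eta,\Pi_n\psi)$ — and local existence of $(\eta_n,\psi_n)$ follows from the Cauchy-Lipschitz theorem, using the smoothness of the Dirichlet-to-Neumann operator on the finite-dimensional space (here one invokes the results from Section~\ref{S:2}).

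The second and central step is the \emph{uniform energy estimate}. The natural quantity is the Hamiltonian
\begin{equation*}
\mathcal{E}(t)=\frac12\int_{\xT}\psi\, G(\eta)\psi\dx+\frac12\int_{\xT}\big( g\,\eta^2+|\Delta\eta|^2+|\partial_t\eta|^2\big)\dx,
\end{equation*}
which for the exact system is conserved (the first two terms are the fluid kinetic plus gravitational energy, the $|\Delta\eta|^2$ term is the elastic energy, and $|\partial_t\eta|^2$ is the structure kinetic energy). Differentiating in time and using both equations of \eqref{IEQ28}, the pressure term $N(\eta,\psi)$ must cancel against the variation of the kinetic energy $\frac12\int\psi G(\eta)\psi$; this is exactly the shape-derivative identity for $G(\eta)$ together with the definition of $N$. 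For the Galerkin system the same computation goes through up to commutator errors from $\Pi_n$, which one shows are lower order and closeable by Gronwall, giving $\mathcal{E}(t)$ bounded uniformly in $n$ and in $t$ on any $[0,T)$. This yields uniform bounds $\eta_n$ in $L^\infty(0,T;H^2(\xT))$, $\partial_t\eta_n$ in $L^\infty(0,T;L^2(\xT))$, and — via the Rellich inequality for $G(\eta)$ as advertised in the introduction — $\psi_n$ in $L^\infty(0,T;H^1(\xT))$. Crucially, the fourth-order term propagates enough regularity on $\eta$ that $\partial_t\eta=G(\eta)\psi$ is controlled, which in turn via the plate equation controls $\partial_t\psi$ in a negative-order space, so $\psi_n$ is equicontinuous in time with values in $H^{-N}$; interpolating with the $H^1$ bound gives $\psi_n\in C([0,T);H^{1-\epsilon})$, and similarly $\eta_n\in C([0,T);H^{2-\epsilon})$, by Aubin-Lions/Arzel\`a-Ascoli.

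The third step is \emph{passage to the limit}. Along a subsequence, $\eta_n\to\eta$ and $\psi_n\to\psi$ in $C([0,T);H^{2-\epsilon})\times C([0,T);H^{1-\epsilon})$ and weak-$*$ in the energy spaces. The linear terms $\partial_t^2\eta_n$, $L\eta_n$, $\partial_t\psi_n$ pass to the limit in the distributional sense immediately. For $\partial_t\eta_n=G(\eta_n)\psi_n$ one uses continuity of $(\eta,\psi)\mapsto G(\eta)\psi$ in the relevant topologies (a standard consequence of the contraction estimates for $G$ recalled in Section~\ref{S:2}). The main obstacle — and the reason for the reformulation — is the nonlinear term: in the classical form \eqref{I5}, $N(\eta,\psi)$ involves $(G(\eta)\psi+\partial_x\psi\,\partial_x\eta)^2$, a product of two factors each only bounded in $L^2_x$, hence not obviously convergent. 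Using instead \eqref{IEQ65}, $N(\eta,\psi)=\partial_x\int_{-\infty}^\eta \partial_y\phi\,\partial_x\phi\dy$, where $\phi$ is the harmonic extension of $\psi$ in the fixed-in-$x$ half-space below $\eta$; since $\nabla_{x,y}\phi_n\to\nabla_{x,y}\phi$ strongly in $L^2_{\mathrm{loc}}$ of the (converging) domains — this uses that $\eta_n\to\eta$ uniformly and elliptic estimates for the extension, Proposition~\ref{Lemma:decayinfty2} — the quadratic expression $\partial_y\phi_n\,\partial_x\phi_n$ converges in $L^1$, and the outer $\partial_x$ is harmless at the level of distributions. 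Proposition~\ref{IP02} guarantees that for the (smooth) Galerkin solutions this form agrees with the classical one, so the approximating equations are genuinely \eqref{IEQ28}. Finally the initial condition $(\eta,\psi)|_{t=0}=(\eta_0,\psi_0)$ is preserved because of the $C([0,T);H^{2-\epsilon}\times H^{1-\epsilon})$ convergence and because $\Pi_n(\eta_0,\psi_0)\to(\eta_0,\psi_0)$; a diagonal argument over $T\to\infty$ upgrades the solution to a global one. I expect the delicate points to be (a) justifying the commutator estimates so that the energy identity closes for the Galerkin system, and (b) the strong $L^2$ convergence of $\nabla_{x,y}\phi_n$ on moving domains needed to pass to the limit in \eqref{IEQ65}.
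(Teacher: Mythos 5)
Your overall skeleton is the same as the paper's (Fourier--Galerkin truncation, conservation of the Hamiltonian, an improved bound giving $\psi_n\in L^\infty_t H^1_x$, compactness, and passage to the limit in the divergence form \eqref{IEQ65} of $N$ via convergence of the harmonic extensions on the moving domains). However, the proposal glosses over precisely the three points that carry the technical weight, and two of them are genuine gaps as stated. First, the step ``via the Rellich inequality \dots\ $\psi_n\in L^\infty(0,T;H^1)$'' does not go through directly: the energy only controls $\partial_t\eta_n=J_nG(\eta_n)\psi_n$ in $L^2$, not $G(\eta_n)\psi_n$ itself, and the Rellich estimate needs the full $\Vert G(\eta_n)\psi_n\Vert_{L^2}$. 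One must control the missing piece $(I-J_n)G(\eta_n)\psi_n$, and this is where the paper invokes the paradifferential comparison $G(\eta)=\la D_x\ra+R(\eta)$ with a smoothing remainder (Proposition~\ref{coro:paraDN1}), the algebraic fact $(I-J_n)\la D_x\ra\psi_n=0$, and a two-step bootstrap ($H^{1/2}\to H^{3/4}\to H^1$). Without this (or an equivalent commutator argument), the $H^1$ bound on $\psi_n$ is unproved. Second, your claim that ``the plate equation controls $\partial_t\psi$ in a negative-order space'' is circular: the second equation contains $\partial_t^2\eta_n$, which is not uniformly bounded by the energy, and expressing it through the shape-derivative formula reintroduces $\partial_t\psi_n$. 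The paper circumvents this by estimating instead $\partial_t\bigl(\psi_n+J_nG(\eta_n)\psi_n\bigr)=-J_nN(\eta_n,\psi_n)-J_nL\eta_n$ in $L^\infty_tH^{-2}_x$, obtaining compactness for $u_n=\psi_n+J_nG(\eta_n)\psi_n$, and then recovering compactness of $\psi_n$ by applying the inverses $R_n(\eta_n)=(I+J_nG(\eta_n))^{-1}$, which are bounded on $H^\mu$, $-1/2\le\mu\le 1/2$, uniformly in $n$ (Lemma~\ref{L:Rn}). Some substitute for this indirect argument is needed; as written, your equicontinuity claim for $\psi_n$ has no proof.

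Two further remarks. The local solvability of the Galerkin system is not a plain application of Cauchy--Lipschitz, because the system is second order in time and the equations couple $\partial_t\psi_n$ with $\partial_t^2\eta_n$; the paper introduces the augmented unknown $\theta=\partial_t\eta$ and must invert the operator matrix, which again reduces to the invertibility of $I+J_nG(\eta)$ on $L^2_n$ (Lemma~\ref{L:Rn}), exploiting the positivity of $G(\eta)$ --- this should at least be acknowledged. Finally, your expectation of ``commutator errors closeable by Gronwall'' in the energy identity is unnecessary pessimism: with the truncation chosen as in \eqref{A1}, the solution lies in the range of $J_n$, the projections drop out of the pairings by self-adjointness, and the energy \eqref{EQ91} is conserved exactly; global existence of the Galerkin solutions then follows from the conservation law together with the trace inequality (for $d=1$ the $L^\infty$ version \eqref{EQ149} suffices), with no Gronwall growth. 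Your treatment of the limit in the nonlinearity via \eqref{IEQ65} and the comparison of harmonic extensions is in line with the paper (Lemma~\ref{L01}), provided the strong convergence of $\psi_k$ in $C_tH^{1/2}$ is first secured, which again rests on the compactness step you have not justified.
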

\begin{rema}
Note that $\eta$ and $\psi$ are continuous in the
Sobolev spaces specified in (i) and (ii) so the values of $\eta$ and
$\psi$ at the time~$0$ are well-defined.
\end{rema}
\begin{rema}
Our proof nearly yields a similar outcome for $d=2$. Specifically, we establish the existence and uniqueness of smooth solutions to the Galerkin approximation (refer to Proposition~\ref{T2}; note that for $d=2$, even the solvability of the Galerkin approximation requires a nontrivial trace inequality involving only the ${\rm BMO}$-norm of the slope). Moreover, we could extend this result to $d=2$ by substituting the term involving the bi-Laplacian, $\Delta^2\eta$, with any strictly greater power, that is $\Delta^{2+\epsilon}\eta$ 
with $\eps>0$. However, pursuing this result would entail non trivial
technical problems, and it is not a physical equation. Consequently,
we refrain from exploring this direction in this paper.
\end{rema}

\section{Reformulation of the water-wave problem}\label{S:2}

In this section, we begin by recalling various definitions and 
results on the Dirichlet-to-Neumann operator. Then, we 
will prove Proposition~\ref{IpropN} and Proposition~\ref{IP02} stated in the introduction about the reformulation of the equations.

\subsection{Harmonic extension and the Dirichlet-to-Neumann operator}

We follow the approach
to the classical water-wave problem introduced by Zakharov~(\cite{Zakharov1968,Zakharov}).
As already mentioned, rather than studying 
the system in $(\eta,\phi)$, we work with the pair of unknowns 
$(\eta, \psi)$, 
where
\begin{equation*}
\psi(t,x) = \phi(t,x,\eta(t,x))
\end{equation*}
is the trace of $\phi$ at the free surface~$\Sigma$. 

For better understanding, we recall the definitions and 
classical results starting with smooth functions, and we will later generalize to functions with limited regularity.

We first recall the following elementary result (see~\cite{A-Berkeley}).

\begin{prop}\label{Lemma:decayinfty}
Consider two functions $\eta,\psi \in C^\infty(\xT^d)$, and introduce the domain
\begin{equation*}
\Omega\defn \{\,(x,y)\in\xT^d\times\xR \,;\, y<\eta(x)\,\}.
\end{equation*}
There exists a unique smooth function 
$\phi\in C^\infty(\overline{\Omega})$ such that 
$\nabla_{x,y}\phi\in H^\infty(\Omega)=\bigcap_{k\in\mathbb{N}}H^{k}(\Omega)$,
which satisfies
\begin{equation}
\llabel{defiphi2}
\left\{
\begin{aligned}
&\Delta_{x,y} \phi =0 \quad \text{in}\quad
\Omega,\\
&\phi (x,\eta(x))=\psi(x).
\end{aligned}
\right.
\end{equation} 
Moreover, for any multi-index $\alpha\in\xN^d$ and any $\beta\in \xN$ with $\la\alpha\ra+\beta\geq1$, 
we have
\be\label{decaytozero}
\partial_x^\alpha\partial_y^\beta\phi\in L^2(\Omega)\quad \text{and}\quad
\lim_{y\to-\infty}\sup_{x\in\xT^{d}}\bigl| \partial_x^\alpha\partial_y^\beta\phi(x,y)\bigr|=0.
\ee
\end{prop}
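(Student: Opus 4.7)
The plan is to straighten the free boundary by the diffeomorphism $(x,y)\mapsto(x,z)$ with $z=y-\eta(x)$, which maps $\Omega$ onto the half-cylinder $\xT^d\times(-\infty,0)$. In the new variables, $\Delta_{x,y}$ is transformed into a smooth uniformly elliptic operator $P$ whose coefficients depend polynomially on the derivatives of $\eta$ and belong to $L^\infty$. To absorb the non-homogeneous boundary value, I introduce the Poisson-type lift
\begin{equation*}
\Phi(x,z)\defn\sum_{k\in\xZ^d}e^{|k|z}\widehat\psi(k)\, e^{ik\cdot x},
\end{equation*}
which is harmonic on $\xT^d\times(-\infty,0)$, equals $\psi$ at $z=0$, and whose derivatives of positive order decay exponentially as $z\to-\infty$ (this uses the rapid decay of $\widehat\psi(k)$ in $k$ granted by $\psi\in C^\infty(\xT^d)$).

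Writing the unknown harmonic extension in the straightened variables as $\Phi+u$, the correction $u$ must solve $Pu=-P\Phi$ with $u|_{z=0}=0$ and decay at $-\infty$. Since $P\Phi$ is smooth and exponentially decaying at $-\infty$, a standard variational argument in the homogeneous Sobolev space $\{u:\nabla_{x,z}u\in L^2,\ u|_{z=0}=0\}$ produces a unique weak solution with $\nabla_{x,z}u\in L^2$. Tangential differentiation (which commutes with $P$ up to smooth coefficients) combined with elliptic regularity up to the flat boundary $z=0$ then iteratively upgrades $u$ to a smooth function with $\nabla_{x,z}u\in H^\infty$. Undoing the change of variables yields $\phi\in C^\infty(\overline\Omega)$ with $\nabla_{x,y}\phi\in H^\infty(\Omega)$. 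Uniqueness is immediate: the difference $w$ of two candidate solutions is harmonic on $\Omega$ with $w|_\Sigma=0$ and $\nabla w\in L^2(\Omega)$, so integration by parts (justified by the decay at $-\infty$) gives $\int_\Omega|\nabla w|^2=0$, forcing $w\equiv 0$.

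For the decay claim~\e{decaytozero}, I observe that on the strip $y<\eta_{\min}\defn\min_x\eta(x)$ the function $\phi$ is harmonic and $2\pi$-periodic in $x$, so Fourier decomposition gives
\begin{equation*}
\phi(x,y)=(A_0+B_0 y)+\sum_{k\neq 0}\bigl(A_k e^{|k|y}+B_k e^{-|k|y}\bigr)e^{ik\cdot x},\qquad y<\eta_{\min}.
\end{equation*}
The requirement $\nabla\phi\in L^2(\Omega)$ (inherited from the variational construction) forces $B_0=0$ and $B_k=0$ for $k\neq 0$. Any derivative $\partial_x^\alpha\partial_y^\beta$ with $|\alpha|+\beta\geq 1$ annihilates the constant term and produces a series whose $k$-th coefficient carries a factor $|k|^{|\alpha|+\beta}e^{|k|y}$; by the rapid decay of $\widehat\psi(k)$, the series converges uniformly in $x$ and decays at least like $e^{y}$ as $y\to-\infty$, which yields both conclusions of~\e{decaytozero}.

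The main technical obstacle I expect is the variational setup on the unbounded cylinder: one cannot simply work in $H^1$ (the zero horizontal frequency has no decay in $z$ imposed a priori), so one must use a homogeneous Sobolev space and justify boundedness of the linear form $v\mapsto-\int(P\Phi)v$ by a weighted Hardy-type inequality exploiting the vanishing trace at $z=0$ and the exponential decay of $P\Phi$. Once this coercive framework is in place, the remaining steps---elliptic regularity up to the boundary and the Fourier decay argument---are routine.
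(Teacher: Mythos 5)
The paper does not actually prove this proposition: it is recalled from the lecture notes \cite{A-Berkeley}, so there is no internal proof to compare against. Your proposal is the standard argument (flatten the boundary, lift the data by the flat Poisson extension, solve for the correction variationally in a homogeneous space with zero trace, bootstrap with elliptic regularity, and read off the decay from the Fourier representation in a lower strip), and it is essentially correct. Two small points. First, in the decay step the coefficients $A_k$ of the expansion on $\{y<\eta_{\min}\}$ are \emph{not} $\widehat\psi(k)$, since the boundary is curved; the rapid (in fact, only mild) decay you need is that of the Fourier coefficients of $\phi(\cdot,y_0)$ on a horizontal slice $y_0<\eta_{\min}$, which follows from interior regularity of harmonic functions or simply from $\nabla_{x,y}\phi\in L^2$, and then the factor $e^{|k|(y-y_0)}$ does all the work; this is a trivially fixable misattribution. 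Second, the Hardy-type inequality you anticipate can be avoided entirely by writing the transformed Laplacian in divergence form, $P=\cnx_{x,z}(A\nabla_{x,z}\cdot)$ with $A$ uniformly elliptic, and taking as right-hand side the functional $v\mapsto-\int A\nabla_{x,z}\Phi\cdot\nabla_{x,z}v$, which is bounded on your homogeneous space by Cauchy--Schwarz since $\nabla_{x,z}\Phi\in L^2$ (the Hardy route also works and is consistent with the weighted space $H^{1,0}$ used in the paper, but it is not needed).
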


Note that we use the convention $\mathbb{N}=\{0,1,2,\ldots\}$.

We shall say that $\phi$ is the harmonic extension of $\psi$ to~$\Omega$.

\begin{rema}
One can also extend the definition of the 
harmonic extension to functions with limited regularity; see 
Section~\ref{S:4.2}.
\end{rema}

To state the equations that govern the evolution of 
$(\eta,\psi)$, 
we now introduce the Dirichlet-to-Neumann operator. This operator finds applications in various areas of analysis, such as harmonic analysis, inverse problems, and spectral theory, among others.
It also plays a central role 
in the study of the water-wave problem, dating back to the seminal work of Craig and Sulem~\cite{CrSu}. 
By definition, this operator associates with a function defined on the boundary of an open set $\Omega$ the normal derivative of its harmonic extension. In our context, it proves more convenient to introduce the coefficient
$\sqrt{1+|\nabla \eta|^2}$
in the definition.

\begin{defi}\label{C1:defiBV}
Consider two smooth functions $\eta,\psi$ in $C^\infty(\xT^d)$, and denote by $\phi$ the harmonic extension of $\psi$ given by Proposition~\ref{Lemma:decayinfty}.\\
$i)$ 
The Dirichlet-to-Neumann operator, denoted by $G(\eta)$, is 
defined by
\begin{align*}
  \begin{split}
G(\eta)\psi =
\sqrt{1+|\partialx\eta|^2}\,
\partial _n \phi\arrowvert_{y=\eta},
  \end{split}
\end{align*}
where
$\partial_n=n\cdot\nabla_{x,y}$
is the normal derivative.

$ii)$ We also define the operators
\begin{align*}
  \begin{split}
B(\eta)\psi&=(\partial_y\phi)\arrowvert_{y=\eta},\\
V(\eta)\psi&=(\nabla_x \phi)\arrowvert_{y=\eta}.
  \end{split}
  \end{align*}
\end{defi}

\begin{rema}
\label{R01}
$i)$ In the sequel, we shall always use $B$ and $V$ as compact notations for $B(\eta)\psi$ and~$V(\eta)\psi$. 

$ii)$ To clarify the notations, note that
\begin{equation}\llabel{dn}
\begin{aligned}
\sqrt{1+ |\partialx \eta |^2} \,\partial_n \phi \big\arrowvert_{y=\eta(x)}&=
(\partial_y \phi - \partialx \eta \cdot \partialx \phi )\Big\arrowvert_{y=\eta(x)}\\ 
&=(\partial_y \phi)(x,\eta(x))-\partialx \eta (x)\cdot (\partialx \phi)(x,\eta(x)).
\end{aligned}
\end{equation}
\end{rema}

In the next statement, we recall 
two elementary identities satisfied by the quantities $B$ and~$V$.

\begin{lemm}\label{cancellation}
For any $\eta,\psi \in C^\infty(\xT^d)$, we have the identities 
\begin{align}
&B= \frac{G(\eta)\psi+\partialx \eta\cdot\nabla\psi}{1+|\partialx  \eta|^2},\label{defi:BV}\\
&V=\nabla\psi-B \nabla \eta.\label{defi:BV2}
\end{align}
\end{lemm}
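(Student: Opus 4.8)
The plan is to prove the two identities \eqref{defi:BV} and \eqref{defi:BV2} by a direct computation starting from the chain rule applied to the trace relation $\psi(x)=\phi(x,\eta(x))$, together with the definition of $G(\eta)\psi$ from Definition~\ref{C1:defiBV}. Throughout, write $B=(\partial_y\phi)\arrowvert_{y=\eta}$ and $V=(\nabla_x\phi)\arrowvert_{y=\eta}$ as in Remark~\ref{R01}, and recall from \eqref{dn} that $G(\eta)\psi=(\partial_y\phi-\nabla\eta\cdot\nabla_x\phi)\arrowvert_{y=\eta}=B-\nabla\eta\cdot V$.

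\textbf{Step 1: the chain rule identity.} Differentiating $\psi(x)=\phi(x,\eta(x))$ in $x$ gives
\begin{equation*}
\nabla\psi=(\nabla_x\phi)(x,\eta(x))+(\partial_y\phi)(x,\eta(x))\,\nabla\eta=V+B\,\nabla\eta,
\end{equation*}
which upon rearranging is exactly \eqref{defi:BV2}: $V=\nabla\psi-B\,\nabla\eta$. This requires only the smoothness of $\eta$ and $\phi\in C^\infty(\overline\Omega)$ guaranteed by Proposition~\ref{Lemma:decayinfty}, so the chain rule is legitimate.

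\textbf{Step 2: solve for $B$.} Substitute the expression for $V$ from Step 1 into the formula $G(\eta)\psi=B-\nabla\eta\cdot V$:
\begin{equation*}
G(\eta)\psi=B-\nabla\eta\cdot(\nabla\psi-B\,\nabla\eta)=B\bigl(1+|\nabla\eta|^2\bigr)-\nabla\eta\cdot\nabla\psi.
\end{equation*}
Solving for $B$ yields $B=\dfrac{G(\eta)\psi+\nabla\eta\cdot\nabla\psi}{1+|\nabla\eta|^2}$, which is \eqref{defi:BV}. Then \eqref{defi:BV2} follows by plugging this back into the Step 1 identity, completing the proof.

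There is essentially no obstacle here: the only points to be careful about are (a) that $\phi$ is smooth up to the boundary so the traces and the chain rule make sense — this is precisely the content of Proposition~\ref{Lemma:decayinfty} — and (b) keeping the normal-derivative bookkeeping in \eqref{dn} consistent with the sign conventions for $n$. The argument is purely algebraic once the chain rule is written down, so I would present it in the two short steps above.
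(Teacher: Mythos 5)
Your proposal is correct and follows essentially the same route as the paper: the chain rule applied to $\psi(x)=\phi(x,\eta(x))$ gives \eqref{defi:BV2}, and substituting into $G(\eta)\psi=B-V\cdot\nabla\eta$ and solving for $B$ gives \eqref{defi:BV}. Nothing further is needed.
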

\begin{proof}
The chain rule implies 
\begin{equation*}
\nabla \psi=\nabla (\phi(x,\eta(x)))=(\nabla \phi +(\partial_y \phi)\nabla \eta)\arrowvert_{y=\eta}=V+B\nabla \eta
,
\end{equation*}
from where 
we get \eqref{defi:BV2}.
On the other hand, by the definition of the operator $G(\eta)$, we have
\begin{equation*}
G(\eta)\psi=\big( \partial_y \phi-\nabla \eta\cdot \nabla \phi\big)\arrowvert_{y=\eta}=B-V\cdot \nabla \eta,
\end{equation*}
so the identity \e{defi:BV} for $B$  follows from
\eqref{defi:BV2}.
\end{proof}

The expression $G(\eta)\psi$ is linear in $\psi$, but depends nonlinearly on~$\eta$, 
which is the main difficulty in the analysis of free boundary problems. The elucidation of the dependence on $\eta$ is facilitated by the following fundamental identity (we refer to Lannes's original article or his book for more details; see~\cite{LannesJAMS,LannesLivre}). 

\begin{prop}[Lannes's shape derivative formula]\label{P:shape}
For all $\eta,\psi,\zeta\in C^\infty(\xT^d)$, the shape derivative of the Dirichlet-to-Neumann operator is given by
 \be\llabel{n:shape}
  \lim_{\eps\rightarrow 0} \frac{1}{\eps}\bigl( G(\eta+\eps \zeta)\psi -G(\eta)\psi\bigr)
  = -G(\eta)(B\zeta) -\cnx (V\zeta),
 \ee
where $B=B(\eta)\psi$ and $V=V(\eta)\psi$.
\end{prop}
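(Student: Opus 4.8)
The plan is to establish the shape derivative formula by differentiating the boundary value problem defining $\phi$ with respect to the domain. Since $\phi = \phi(\eta)$ depends on $\eta$ through both the equation $\Delta_{x,y}\phi = 0$ in $\Omega$ and the Dirichlet condition $\phi(x,\eta(x)) = \psi(x)$, the first step is to introduce the ``good unknown'' $\dot\phi$ obtained by differentiating $\phi$ along the flattened variable. Concretely, write $\phi_\eps$ for the harmonic extension of $\psi$ associated with the domain $\Omega_\eps$ defined by $\eta + \eps\zeta$, set $\dot\phi = \lim_{\eps\to 0}\eps^{-1}(\phi_\eps - \phi)$ computed in a fixed set of coordinates, and observe that $\dot\phi$ is harmonic in $\Omega$. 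The subtlety is the boundary condition: differentiating $\phi_\eps(x, \eta(x) + \eps\zeta(x)) = \psi(x)$ in $\eps$ at $\eps = 0$ gives $\dot\phi\arrowvert_{y=\eta} + \zeta\,(\partial_y\phi)\arrowvert_{y=\eta} = 0$, i.e. $\dot\phi\arrowvert_{y=\eta} = -\zeta B$. Hence $\dot\phi$ is the harmonic extension of $-B\zeta$.

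The second step is to differentiate the expression $G(\eta)\psi = (\partial_y\phi - \nabla\eta\cdot\nabla\phi)\arrowvert_{y=\eta}$ directly. Writing this trace at $y = \eta(x)$ and differentiating in $\eps$, one picks up three contributions: the term where the trace height moves (producing $\zeta\,\partial_y$ applied to $\partial_y\phi - \nabla\eta\cdot\nabla\phi$), the term where $\nabla\eta$ is differentiated (producing $-\nabla\zeta\cdot\nabla\phi\arrowvert_{y=\eta} = -\nabla\zeta\cdot V$), and the term where $\phi$ itself is differentiated (producing $(\partial_y\dot\phi - \nabla\eta\cdot\nabla\dot\phi)\arrowvert_{y=\eta} = G(\eta)\dot\phi = -G(\eta)(B\zeta)$ by Step 1). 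The remaining task is to check that the first two contributions combine to $-\cnx(V\zeta)$. Using $\Delta_{x,y}\phi = 0$ to rewrite $\partial_y^2\phi = -\Delta\phi$, together with the chain-rule identity $\nabla(\,\cdot\arrowvert_{y=\eta}) = (\nabla + \nabla\eta\,\partial_y)(\cdot)\arrowvert_{y=\eta}$ and the identities $B$, $V$ of Lemma~\ref{cancellation}, one expands $\cnx(V\zeta) = \zeta\,\cnx V + V\cdot\nabla\zeta$ and matches terms; the harmonicity of $\phi$ is exactly what makes the ``moving height'' term collapse onto $\cnx V$ plus lower-order pieces that cancel against the $\nabla\eta$ contribution.

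The main obstacle is the rigorous justification of the differentiation under the trace: one must show that $\eps \mapsto \phi_\eps$ is differentiable in a suitable function space (say with values in $H^\infty$ of a fixed reference domain after pulling back by a diffeomorphism straightening the top boundary), and that traces and tangential/normal derivatives commute with this limit. The cleanest route is to transport everything to the fixed strip $\{(x,z) : z < 0\}$ (or a fixed domain) via the change of variables $y = \rho(x,z)$ with $\rho$ depending smoothly on $\eps$, turning $\Delta_{x,y}\phi = 0$ into a variable-coefficient elliptic equation $P_\eps \tilde\phi_\eps = 0$ with smooth dependence on $\eps$, then differentiating the elliptic problem and invoking elliptic regularity plus uniqueness (Proposition~\ref{Lemma:decayinfty}) to get $\tilde{\dot\phi} \in H^\infty$. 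Once differentiability is secured, the algebra of Steps 1--2 is a short computation. Since smoothness of all data is assumed here, no low-regularity issues intervene, and the decay at $y \to -\infty$ from Proposition~\ref{Lemma:decayinfty} guarantees all the integrations by parts and trace manipulations are legitimate.
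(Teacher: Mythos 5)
The paper does not prove Proposition~\ref{P:shape}: it states the identity and refers to Lannes~\cite{LannesJAMS,LannesLivre} for the argument. Your plan is precisely the standard Lannes-style proof---pull back to a fixed domain, show that $\eps\mapsto\phi_\eps$ is differentiable by elliptic theory, identify $\dot\phi$ as the harmonic function in $\Omega$ with Dirichlet trace $-B\zeta$ (so its Neumann data is $-G(\eta)(B\zeta)$), and then differentiate $G(\eta)\psi=(\partial_y\phi-\nabla\eta\cdot\nabla\phi)\arrowvert_{y=\eta}$ directly---and the three contributions you list in Step~2 are the correct ones. The only thing I would tighten is the last sentence of Step~2: there is no cancellation between pieces of the moving-height term and the $\nabla\eta$-contribution. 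From $\partial_{x_i}V_j=(\partial_{x_i}\partial_{x_j}\phi+\partial_{x_i}\eta\,\partial_y\partial_{x_j}\phi)\arrowvert_{y=\eta}$ and harmonicity $\partial_y^2\phi=-\Delta\phi$, one has the exact identity
\[
\partial_y\bigl(\partial_y\phi-\nabla\eta\cdot\nabla\phi\bigr)\arrowvert_{y=\eta}
=-\bigl(\Delta\phi+\nabla\eta\cdot\partial_y\nabla\phi\bigr)\arrowvert_{y=\eta}
=-\cnx V,
\]
so the moving-height contribution is exactly $-\zeta\,\cnx V$, the $\nabla\eta$-contribution $-\nabla\zeta\cdot V$ is exactly the other piece of $-\cnx(V\zeta)=-\zeta\,\cnx V-V\cdot\nabla\zeta$, and the $\phi$-contribution is $-G(\eta)(B\zeta)$, with nothing left over. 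Step~3 is stated as a program rather than carried out, but it is the right program, and since $\eta,\psi,\zeta$ are assumed smooth (with the decay from Proposition~\ref{Lemma:decayinfty} justifying the trace manipulations) it is exactly how the formula is established in the cited references.
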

\begin{rema}
This identity extends to functions with limited regularity; however, it is worth noting that the smooth case is the only instance
when we apply the formula.
\end{rema}

The previous result allows us to compute the derivatives of~$G(\eta)\psi$. In particular, we will use the following corollary 
of the shape derivative formula.

\begin{prop}\label{P:t}
For all functions $\eta,\psi\in C^\infty([0,T]\times \xT^d)$, 
the time derivative of the Dirichlet-to-Neumann map is given by 
\begin{equation}
\label{n:time}
\partial_t \left( G(\eta) \psi \right) = G(\eta)( \partial_{t} \psi -B \partial_{t} \eta ) -\cnx( V  \partial_{t} \eta).
  \end{equation}
\end{prop}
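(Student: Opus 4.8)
The plan is to obtain \eqref{n:time} as a direct consequence of Lannes's shape derivative formula (Proposition~\ref{P:shape}), treating the time derivative as a limit of difference quotients and using that $t\mapsto(\eta(t,\cdot),\psi(t,\cdot))$ is smooth into $C^\infty(\xT^d)$. The key point is that $G(\eta)\psi$ depends on two arguments, one entering nonlinearly ($\eta$) and one linearly ($\psi$), so the time derivative splits into a ``shape'' part and a ``linear'' part.

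Concretely, I would first write, for fixed $t$ and small $h$,
\begin{equation*}
G(\eta(t+h))\psi(t+h)-G(\eta(t))\psi(t)
=\bigl(G(\eta(t+h))-G(\eta(t))\bigr)\psi(t+h)
+G(\eta(t))\bigl(\psi(t+h)-\psi(t)\bigr).
\end{equation*}
Divide by $h$ and let $h\to0$. In the second term, linearity of $G(\eta(t))$ in its argument and continuity give $G(\eta(t))(\partial_t\psi(t))$. In the first term, write $\eta(t+h)=\eta(t)+h\,\zeta_h$ with $\zeta_h\to\partial_t\eta(t)$ in $C^\infty$; applying Proposition~\ref{P:shape} (and using that the convergence is uniform enough in the perturbation direction, together with the linearity in $\psi$ to replace $\psi(t+h)$ by $\psi(t)$ in the limit) yields $-G(\eta)(B\,\partial_t\eta)-\cnx(V\,\partial_t\eta)$, where $B=B(\eta(t))\psi(t)$ and $V=V(\eta(t))\psi(t)$. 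Adding the two contributions and regrouping the $G(\eta)$ terms gives
\begin{equation*}
\partial_t\bigl(G(\eta)\psi\bigr)=G(\eta)(\partial_t\psi)-G(\eta)(B\,\partial_t\eta)-\cnx(V\,\partial_t\eta)
=G(\eta)(\partial_t\psi-B\,\partial_t\eta)-\cnx(V\,\partial_t\eta),
\end{equation*}
which is \eqref{n:time}.

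The main obstacle is making the passage to the limit in the first term fully rigorous: Proposition~\ref{P:shape} as stated is a one-parameter (Gateaux) derivative at a fixed base point $\eta(t)$ in the direction $\zeta$, whereas here both the base point and the direction vary with $h$. I would handle this either by invoking the (stronger, $C^1$-in-$\eta$) version of the shape derivative formula that is available in the references \cite{LannesJAMS,LannesLivre,ABZ-memoir} — giving a genuine Fréchet derivative $\eta\mapsto G(\eta)\psi$ with derivative $\zeta\mapsto -G(\eta)(B\zeta)-\cnx(V\zeta)$, so that the chain rule applies directly — or, staying elementary, by writing $G(\eta(t+h))-G(\eta(t))=\int_0^1 \frac{d}{d\sigma}G(\eta(t)+\sigma(\eta(t+h)-\eta(t)))\,d\sigma$, evaluating the integrand via Proposition~\ref{P:shape}, and passing to the limit under the integral using smoothness and continuity of $(\eta,\psi)\mapsto(B(\eta)\psi, V(\eta)\psi)$ as maps on $C^\infty(\xT^d)$. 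Everything else — linearity of $G(\eta)$ in $\psi$, continuity of the operators, and the final regrouping — is routine.
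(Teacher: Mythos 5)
The paper provides no explicit proof of this proposition, stating only that it is a ``corollary of the shape derivative formula'' (Proposition~\ref{P:shape}). Your argument is correct and is exactly the natural derivation of \eqref{n:time} from that formula --- splitting the difference quotient into a shape part (handled by Proposition~\ref{P:shape}) and a linear-in-$\psi$ part --- and you correctly identify and resolve the one genuine subtlety, namely that both the base point $\eta(t)$ and the perturbation direction $\zeta_h=(\eta(t+h)-\eta(t))/h$ vary with $h$.
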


\subsection{Functions with limited regularity}\label{S:4.2}
In order to state the next proposition, we recall two notations from \cite{ABZ1},
$\mathscr{D}_0$ and
the space $H^{1,0}(\Omega)$, where 
$\Omega$ is of the form 
$\Omega=\{(x,y): x\in \xT^d,y<\eta(x)\}$ for some Lipschitz function $\eta$.
Denote by~$\mathscr{D}$ the space of functions~$u\in C^{\infty}(\Omega)$ such that
$\nabla_{x,y}u\in L^2(\Omega)$. We then define~$\mathscr{D}_0$ 
as the subspace of functions~$u\in \mathscr{D}$ such that~$u$ is
equal to~$0$ in a neighborhood of~$\partial\Omega$.
Denote by~$H^{1,0}( \Omega)$ the 
space of functions~$u$ on~$\Omega$
such that there exists a sequence~$(u_n)\in \mathscr{D}_0$ for which
$$
u_n  \rightarrow  u \text{ in } L^2( \Omega, (1+|y|)^{-1}\dydx)
,\qquad
\nabla_{x,y} u_n  \rightarrow \nabla_{x,y} u \text{ in } L^2( \Omega, \dydx)
.
$$
Then $H^{1,0}(\Omega)$ is a Hilbert space when equipped with the norm
  \begin{equation*}
   \Vert u\Vert_{H^{1,0}(\Omega)}
   = \Vert \nabla_{x,y}u\Vert_{L^{2}(\Omega)}.
  \end{equation*}

The following statement provides an existence of the harmonic
extension and describes its behavior.
Compared to Proposition~\ref{Lemma:decayinfty}, it assumes that $\eta$ and $\psi$ have a limited regularity; we refer to~\cite[Chapter~$4$]{A-Berkeley} 
for the proof. 

\begin{prop}\label{Lemma:decayinfty2}
Consider a Lipschitz 
function $\eta\in W^{1,\infty}(\xT^d)$, and introduce the domain
\begin{equation*}
\Omega\defn \{\,(x,y)\in\xT^d\times\xR \,;\, y<\eta(x)\,\}.
\end{equation*}
For all $\psi\in H^\mez(\xT^d)$, 
there exists a unique smooth function 
$\phi\in H^{1,0}(\Omega)$, with
\be\label{EQ41a}
\lA \phi\rA_{H^{1,0}(\Omega)}\le C(\lA \nabla\eta\rA_{L^\infty})\lA \psi\rA_{H^\mez}
\ee
and such that 
\begin{equation}\llabel{defiphi1}
\left\{
\begin{aligned}
&\Delta_{x,y} \phi =0 \inin{\Omega},\\
&\phi (x,\eta(x))=\psi(x),
\end{aligned}
\right.
\end{equation}
where the second equation holds in the sense of trace. 

Moreover, for any $h>\lA \eta\rA_{L^\infty}$ and 
for any multi-index $\alpha\in\xN^d$ and any $\beta\in \xN$ with $\la\alpha\ra+\beta\geq1$,
one has
  \begin{equation}
   \partial_x^\alpha\partial_y^\beta\phi\in L^2(\{y<-h\})   
   \label{EQ61}
  \end{equation}
and
$$
\lim_{y\to-\infty}\sup_{x\in\xT^{d}}\bigl| \partial_x^\alpha\partial_y^\beta\phi(x,y)\bigr|=0.
$$
\end{prop}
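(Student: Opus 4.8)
The statement to prove is Proposition~\ref{Lemma:decayinfty2}, the existence, uniqueness, and decay of the harmonic extension for Lipschitz $\eta$ and $\psi\in H^{1/2}$. The plan is to obtain the extension by a variational (Lax--Milgram) argument in the Hilbert space $H^{1,0}(\Omega)$, and then to upgrade regularity and establish decay by interior elliptic estimates far from the boundary.

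\textbf{Existence and uniqueness via a variational formulation.} First I would reduce to the case of zero boundary data by subtracting a fixed lift. Since $\eta\in W^{1,\infty}(\xT^d)$ and $\psi\in H^{1/2}(\xT^d)$, one can build an explicit lift $\underline\psi$ of $\psi$ into $\Omega$ with $\nabla_{x,y}\underline\psi\in L^2(\Omega)$ and compact vertical support near the top; a convenient choice is $\underline\psi(x,y)=\chi(y-\eta(x))\, e^{(y-\eta(x))\langle D_x\rangle}\psi(x)$ with $\chi$ a cutoff equal to $1$ near $0$, whose $L^2$ gradient bound is exactly the classical Poisson-kernel estimate $\|\nabla_{x,y}\underline\psi\|_{L^2}\lesssim C(\|\nabla\eta\|_{L^\infty})\|\psi\|_{H^{1/2}}$. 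Then I seek $\phi=\underline\psi+u$ with $u\in H^{1,0}(\Omega)$ solving $\int_\Omega \nabla_{x,y}u\cdot\nabla_{x,y}w = -\int_\Omega \nabla_{x,y}\underline\psi\cdot\nabla_{x,y}w$ for all $w\in H^{1,0}(\Omega)$. The bilinear form is the $H^{1,0}$ inner product, hence coercive and bounded by definition of the norm, and the right-hand side is a bounded linear functional on $H^{1,0}(\Omega)$; Lax--Milgram (here just the Riesz representation theorem) gives a unique $u$, with $\|u\|_{H^{1,0}}\le\|\nabla_{x,y}\underline\psi\|_{L^2}$, which yields \eqref{EQ41a}. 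Uniqueness of $\phi$ follows since any two solutions differ by an element of $H^{1,0}(\Omega)$ that is harmonic with zero trace, which must vanish by testing against itself. One must check that the trace of an $H^{1,0}(\Omega)$ function on the Lipschitz graph $\Sigma$ is well-defined in $H^{-1/2}$ (or better) and that $\phi|_{\Sigma}=\psi$ in that sense; this uses the density of $\mathscr D_0$ built into the definition of $H^{1,0}$ together with a standard trace inequality on Lipschitz domains after flattening the boundary by the bi-Lipschitz map $(x,y)\mapsto(x,y-\eta(x))$.

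\textbf{Smoothness and decay away from the interface.} Once $\phi$ is a finite-energy weak solution of $\Delta_{x,y}\phi=0$ in $\Omega$, interior elliptic regularity is immediate: $\phi\in C^\infty$ in the open set $\Omega$, and in particular in every slab $\{y<-h\}$ with $h>\|\eta\|_{L^\infty}$, which lies at positive distance from $\Sigma$. For the quantitative decay, fix $h'>h>\|\eta\|_{L^\infty}$ and note $\nabla_{x,y}\phi\in L^2(\{y<-h\})$ since it is controlled by $\|\phi\|_{H^{1,0}(\Omega)}$; then on the slab $\{y<-h'\}$, harmonicity plus the Poincar\'e inequality in the $y$-variable (using periodicity in $x$ and the vanishing of $\phi$ at $-\infty$ in $L^2$ that follows from $\partial_y\phi\in L^2$ on half-lines) lets me run a Caccioppoli/mean-value iteration on balls of unit radius to bound $\|\partial_x^\alpha\partial_y^\beta\phi\|_{L^\infty(\{y<-h'-1\})}$ by $\|\nabla_{x,y}\phi\|_{L^2(\{-h'-2<y\})}$-type tails. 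Since the $L^2$ tail $\int_{\{y<-R\}}|\nabla_{x,y}\phi|^2$ tends to $0$ as $R\to\infty$, the corresponding sup over $x$ of $|\partial_x^\alpha\partial_y^\beta\phi(x,y)|$ tends to $0$ as $y\to-\infty$, giving \eqref{EQ61} and the stated limit.

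\textbf{Main obstacle.} The routine analytic core (Lax--Milgram plus interior regularity) is straightforward; the delicate points are, first, making the function space $H^{1,0}(\Omega)$ and the notion of trace on the Lipschitz graph genuinely rigorous — in particular establishing the trace inequality and the density statements on an \emph{unbounded} domain with a merely Lipschitz top boundary and a weight $(1+|y|)^{-1}$ at $-\infty$ — and second, justifying the decay at $y\to-\infty$ without any a priori pointwise control, which forces one to argue through $L^2$ tails and elliptic estimates rather than an explicit Poisson kernel. Fortunately all of this is carried out in \cite[Chapter~4]{A-Berkeley} and \cite{ABZ1}, so in the paper it suffices to set up the variational problem, invoke those references for the functional-analytic framework, and record the decay via the Caccioppoli argument above.
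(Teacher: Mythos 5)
Your proposal is correct and follows essentially the same route as the source the paper relies on: the paper gives no proof of Proposition~\ref{Lemma:decayinfty2} but refers to \cite[Chapter~4]{A-Berkeley} (see also \cite{ABZ1}), where the harmonic extension is constructed exactly as you describe --- an explicit lift of $\psi$ with the Poisson-kernel $H^{1/2}$ energy bound, Lax--Milgram in $H^{1,0}(\Omega)$ for the remainder, and interior elliptic (Caccioppoli-type) estimates in the slabs $\{y<-h\}$ yielding \eqref{EQ61} and the decay of derivatives from the vanishing $L^2$ tails of $\nabla_{x,y}\phi$. The only points worth flagging are minor: the statement ``$\phi\in H^{1,0}(\Omega)$'' is a mild abuse of notation (your $\phi=\underline\psi+u$ with $u\in H^{1,0}(\Omega)$ and $\nabla_{x,y}\phi\in L^2(\Omega)$ is the intended meaning), and your uniqueness step implicitly uses that a finite-energy harmonic function with zero trace lies in $H^{1,0}(\Omega)$ (so that it is an admissible test function and that the resulting constant must vanish), which is part of the functional framework you correctly defer to the cited references.
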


Note that $\nabla_{x,y}\phi$ belongs only to $L^2(\Omega)$, 
so it is not obvious that one can consider the trace 
$\partial_n \phi\arrowvert_{\partial\Omega}$. However, 
since $\Delta_{x,y}\phi=0$, one can express the normal 
derivative in terms of the tangential derivatives and 
$\sqrt{1+|\nabla \eta|^2}\partial_n \phi\arrowvert_{\partial\Omega}$ 
is well-defined and belongs to~$H^{-\mez}(\xT^d)$. 
As a result, for any $\eta\in W^{1,\infty}(\xT^d)$, 
one can define the Dirichlet-to-Neumann map as an operator satisfying
\begin{equation}
G(\eta)\in \mathcal{L}(H^{\mez}(\xT^d),H^{-\mez}(\xT^d)).
\label{EQ122}\end{equation}

Recall also that $G(\eta)$ is a positive operator, in the sense that, for all $\eta\in W^{1,\infty}(\xT^d)$ and all $\psi\in H^{1/2}(\xT^d)$, 
the divergence theorem implies that
$$
\langle \psi,G(\eta)\psi\rangle_{L^2}
=\int_{\xT^d} \psi G(\eta)\psi\dx=\int_{\Sigma}\phi\partial_n\phi\diff\!\sigma=\iint_{\{y<\eta\}}\la \nabla_{x,y}\phi\ra^2\dydx\ge 0,
$$
where $\phi$ is the harmonic extension of $\psi$.

We shall also use a stronger result. Namely, we recall that the classical trace inequality in a Sobolev space 
(such trace inequalities have been systematically studied by Lannes~\cite{LannesLivre} as well as Leoni and Tice~\cite{MR3989147}):\begin{equation}
\int_{\xT^d}
     \psi G(\eta)\psi \dx\ge C(\lA\nabla\eta\rA_{L^\infty})\lA \psi\rA_{\dot{H}^\mez}^2,
\label{EQ149}\end{equation}
where the homogeneous Sobolev norm is defined by
$$
\lA \psi\rA_{\dot{H}^\mez}=\blA \psi-\hat{\psi}(0)\brA_{H^\mez} \quad\text{with}\quad
\hat \psi(0)
   =
   \frac{1}{(2\pi)^{d}}
   \int_{\xT^d}
   \psi(x) \dx.
$$
We also refer to \cite[Proposition $4.3.2$]{A-Berkeley} (see also~\cite[Appendix $B.1$]{AZ-2023virial}) for the proof of the following refined inequality:
\be\label{N01}
\int_{\xT^d}
     \psi G(\eta)\psi \dx\ge \frac{K}{1+\lA\nabla\eta\rA_{L^\infty}}
     \lA \psi\rA_{\dot{H}^\mez}^2.
\ee
Let us also mention that we will need to use later (cf \e{Ntrace}) a refined version of this trace inequality.

Classical results from functional analysis imply that
if $\eta\in C^\infty(\xT^d)$, then, for any 
$s\ge 1/2$,
the operator $G(\eta)$ is bounded from $H^s(\xT^d)$ into 
$H^{s-1}(\xT^d)$. This result extends to 
the case where $\eta$ 
has limited regularity (see \cite{CN,Gunther-Prokert-SIAM-2006,LannesJAMS,WuInvent}). 
In particular, one has the following estimate 
from~\cite[Theorem $3.12$]{ABZ3}.

\begin{prop}[from~\cite{ABZ3}]\label{P:DN2}
Let~$d\ge 1$,~$s>1+\frac{d}{2}$, and~$\frac 1 2 \leq \sigma \leq s$. 
Then there exists a non-decreasing function~$\mathcal{F}\colon\xR_+\rightarrow\xR_+$ 
such that, for all 
$\eta\in H^{s}(\xT^d)$ and~$f\in H^{\sigma}(\xT^d)$, we have 
$G(\eta)f\in H^{\sigma-1}(\xT^d)$, together with the estimate
 \begin{equation}\label{ests+2}
  \lA G(\eta)f \rA_{H^{\sigma-1}}\le 
  \mathcal{F}\bigl(\| \eta \|_{H^{s}}\bigr)\lA f\rA_{H^{\sigma}}.
 \end{equation}
\end{prop}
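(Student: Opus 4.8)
\emph{Strategy.} The plan is to flatten the fluid domain onto a fixed strip, turn the Laplace equation into a uniformly elliptic equation with variable coefficients, and then run an elliptic regularity estimate; the one delicate point is that the coefficients are only of Sobolev class $H^{s-1}$, which forces the use of paradifferential calculus. Concretely, by Proposition~\ref{Lemma:decayinfty2} the harmonic extension $\phi$ of $f$ satisfies $\nabla_{x,y}\phi\in L^2(\Omega)$ with decay as $y\to-\infty$, so after multiplication by a cutoff it suffices to work in the slab $\{\eta(x)-h<y<\eta(x)\}$ for some fixed $h>\lA\eta\rA_{L^\infty}$. Introduce a regularizing change of variables $(x,z)\in\xT^d\times(-1,0)\mapsto(x,\varrho(x,z))$ \`a la Lannes, with $\varrho(x,0)=\eta(x)$, $\partial_z\varrho\geq c>0$, and $\varrho(x,z)-z$ smoothing in $x$ for $z<0$ (built from a mollified version of $\eta$, such as $e^{z\langle D_x\rangle}\eta$). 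Setting $v(x,z)=\phi(x,\varrho(x,z))$, the equation $\Delta_{x,y}\phi=0$ becomes $\cnx_{x,z}(A\nabla_{x,z}v)=0$ on the strip, with $v\arrowvert_{z=0}=f$ and $A$ a symmetric, uniformly positive matrix depending on $\nabla_{x,z}\varrho$; tame (Moser) and composition estimates --- licit since $s-1>d/2$, so $H^{s-1}(\xT^d)$ is an algebra embedded in $L^\infty$ --- bound $A-I$ in the relevant norms by a non-decreasing function of $\lA\eta\rA_{H^s}$.

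\emph{Elliptic estimate via paralinearization.} The target is $v\in H^{\sigma+1/2}$ near $z=0$ (the standard half-derivative elliptic gain in the normal direction), since then $\nabla_{x,z}v\arrowvert_{z=0}\in H^{\sigma-1}(\xT^d)$, and reading $G(\eta)f=(\partial_y\phi-\nabla\eta\cdot\nabla\phi)\arrowvert_{y=\eta}$ (cf.\ Remark~\ref{R01}) off this trace yields both the membership $G(\eta)f\in H^{\sigma-1}(\xT^d)$ and the estimate. The obstruction is that classical elliptic regularity would require $A$ roughly in $H^{\sigma-1/2}$, whereas at $z=0$ it is only $H^{s-1}$, i.e.\ exactly half a derivative short. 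One removes it by paralinearizing: $\cnx_{x,z}(A\nabla_{x,z}v)=\cnx_{x,z}(T_A\nabla_{x,z}v)+R$ with $R$ smoother by almost one derivative (Bony's paraproduct estimates), followed by a paradifferential factorization $\partial_z^2+\dots=(\partial_z-T_a)(\partial_z-T_{\mathcal A})+(\text{smoothing})$ with $a,\mathcal A$ symbols of order $1$ and $\mathcal A$ elliptic. Solving the two first-order-in-$z$ equations, and using that paradifferential operators of order $1$ map $H^{\mu}$ to $H^{\mu-1}$ while $T_A$ is bounded under mere $L^\infty$/H\"older control of $A$, gives $\partial_z v\arrowvert_{z=0}=T_{\mathcal A}(v\arrowvert_{z=0})+(\text{smoother})\in H^{\sigma-1}$ with the right norm. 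Equivalently, one may invoke the Alazard--M\'etivier paralinearization $G(\eta)f=T_\lambda f+R(\eta)f$, with $\lambda$ a symbol of order $1$ and $R(\eta)$ of order $0$.

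\emph{Endpoint, interpolation, and the function $\mathcal F$.} For $\sigma=\mez$ the estimate is elementary: $\lA G(\eta)f\rA_{H^{-1/2}}\le C\lA\nabla_{x,y}\phi\rA_{L^2(\Omega)}\le C(\lA\nabla\eta\rA_{L^\infty})\lA f\rA_{H^{1/2}}$ by \e{EQ41a} and the conormal representation of the normal trace. Proving the estimate for $\sigma$ in a neighborhood of $s$ as above and interpolating the linear map $f\mapsto G(\eta)f$ between these two endpoints covers the whole range $\mez\le\sigma\le s$; the non-decreasing function $\mathcal F$ is produced by the tame estimates above --- polynomial in $\lA\eta\rA_{H^s}$, up to the exponential-type constant coming from the composition estimate for $\varrho$. \textbf{The main obstacle} is precisely this half-derivative loss in the flattened coefficients: the paradifferential calculus of the second step is what makes the sharp threshold $s>1+d/2$ work, and it is the technical core of the argument in \cite[Theorem~3.12]{ABZ3}.
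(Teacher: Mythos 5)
The paper does not prove this proposition — it is quoted verbatim from \cite[Theorem~3.12]{ABZ3} and the proof is deferred to that reference — so the comparison must be with the argument there. Your sketch (regularizing Lannes-type diffeomorphism built from $e^{z\langle D_x\rangle}\eta$, paralinearization of the flattened variable-coefficient elliptic equation, factorization into first-order paradifferential operators in the normal variable to obtain the half-derivative boundary gain, the elementary variational endpoint at $\sigma=\mez$, and interpolation in $\sigma$) is a faithful rendering of the ABZ proof and is correct.
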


The following contraction estimate (which is Theorem~$3.9$ in \cite{ABZ3}) will be very useful when
showing the local existence of solutions 
to the Galerkin systems as well as 
for passing to
the limit. 

\begin{prop}[from~\cite{ABZ3}]
\label{P01}
There exists a non-decreasing
function~$\mathcal{F}\colon\mathbb{R}_{+}\to\mathbb{R}_{+}$
such that
  \begin{equation}
   \lA \bigl( G(\eta_1)-G(\eta_2)\bigr) f\rA_{H^{-\mez}}
   \leq
   \mathcal{F}(\lA (\eta_1,\eta_2)\rA_{W^{1,\infty}})
      \lA \eta_1-\eta_2\rA_{W^{1,\infty}}  \lA f\rA_{H^{\mez}}
    ,
   \label{EQ33}
  \end{equation}
for all~$\eta_1,\eta_2\in W^{1,\infty}(\xT^d)$
and $f\in H^{1/2}(\xT^d)$.
\end{prop}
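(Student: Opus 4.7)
By density, it suffices to establish the inequality when $\eta_1,\eta_2$ and $f$ are smooth, provided the constant depends only on $\lA(\eta_1,\eta_2)\rA_{W^{1,\infty}}$. The plan is to combine the variational characterization of $G(\eta)$ with a common flattening of the two fluid domains, which reduces the problem to an elliptic estimate for a difference on a fixed strip.

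I would introduce the change of variables $\rho_i\colon(x,z)\mapsto(x,z+\eta_i(x))$, mapping $\Omega_0\defn\xT^d\times(-\infty,0)$ onto $\Omega_i=\{y<\eta_i\}$. Let $\phi_i$ be the harmonic extension of $f$ in $\Omega_i$ from Proposition~\ref{Lemma:decayinfty2}, and set $u_i\defn\phi_i\circ\rho_i$. A direct computation shows that $u_i$ satisfies
\begin{align*}
\cnx_{x,z}\bigl(A(\nabla\eta_i)\nabla_{x,z}u_i\bigr)=0 \inin{\Omega_0},\qquad u_i\arrowvert_{z=0}=f,
\end{align*}
where $A(\nabla\eta)$ is a symmetric matrix, bounded and uniformly elliptic in terms of $\lA\nabla\eta\rA_{L^\infty}$, depending smoothly on $\nabla\eta$. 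In particular, $\lA A(\nabla\eta_1)-A(\nabla\eta_2)\rA_{L^\infty}\le\mathcal{F}(\lA(\eta_1,\eta_2)\rA_{W^{1,\infty}})\lA\nabla\eta_1-\nabla\eta_2\rA_{L^\infty}$.

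Next, I would estimate the $H^{-1/2}$ norm by duality. For a test function $g\in H^{1/2}(\xT^d)$ with $\lA g\rA_{H^{1/2}}\le 1$, let $v_i$ be the flattening through $\rho_i$ of the harmonic extension of $g$ in $\Omega_i$. Green's identity in the flattened variables gives
\begin{align*}
\langle G(\eta_i)f,g\rangle=\int_{\Omega_0}A(\nabla\eta_i)\nabla_{x,z}u_i\cdot\nabla_{x,z}v_i\,dz\,dx.
\end{align*}
Telescoping the difference for $i=1$ and $i=2$ produces
\begin{align*}
\langle(G(\eta_1)-G(\eta_2))f,g\rangle=\int_{\Omega_0}\bigl[A(\nabla\eta_1)-A(\nabla\eta_2)\bigr]\nabla u_1\cdot\nabla v_1\,dz\,dx+R_u+R_v,
\end{align*}
with $R_u=\int_{\Omega_0}A(\nabla\eta_2)\nabla(u_1-u_2)\cdot\nabla v_1$ and $R_v=\int_{\Omega_0}A(\nabla\eta_2)\nabla u_2\cdot\nabla(v_1-v_2)$.

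The main term is bounded by the pointwise estimate above, combined with the energy bounds $\lA\nabla_{x,z}u_i\rA_{L^2(\Omega_0)}\lec\lA f\rA_{H^{1/2}}$ and $\lA\nabla_{x,z}v_i\rA_{L^2(\Omega_0)}\lec\lA g\rA_{H^{1/2}}$, which follow from the variational formulation together with \e{EQ41a}. For $R_u$ and $R_v$, observe that $u_1-u_2$ solves $\cnx(A(\nabla\eta_1)\nabla(u_1-u_2))=-\cnx((A(\nabla\eta_1)-A(\nabla\eta_2))\nabla u_2)$ with zero Dirichlet data at $z=0$; testing with $u_1-u_2$ and invoking ellipticity gives $\lA\nabla(u_1-u_2)\rA_{L^2}\lec\lA\eta_1-\eta_2\rA_{W^{1,\infty}}\lA f\rA_{H^{1/2}}$, and an identical argument handles $v_1-v_2$. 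Taking the supremum over $g$ of unit $H^{1/2}$-norm then closes the estimate. The main obstacle is the low regularity: since $A(\nabla\eta_i)$ is merely $L^\infty$, no Sobolev regularity beyond $L^2$ is available on $\nabla u_i$ or $\nabla v_i$; the estimate succeeds precisely because the target space $H^{-1/2}$ is dual to $H^{1/2}$, which matches the $L^2$ regularity of the gradients produced by the flattened harmonic extensions.
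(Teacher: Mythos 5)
The paper does not actually prove Proposition~\ref{P01}: it is imported verbatim from \cite{ABZ3} (Theorem~3.9 there), so the only in-paper analogue of your argument is Lemma~\ref{L01}. Your proof is correct in substance and is essentially a self-contained version of the standard variational proof: flatten both domains by $(x,z)\mapsto(x,z+\eta_i(x))$, so that $u_i=\phi_i\circ\rho_i$ solves $\cnx_{x,z}(A(\nabla\eta_i)\nabla_{x,z}u_i)=0$ on the strip with a symmetric, uniformly elliptic matrix $A(\nabla\eta_i)$ quadratic in $\nabla\eta_i$; use the Dirichlet-form characterization $\langle G(\eta_i)f,g\rangle=\int_{\Omega_0}A(\nabla\eta_i)\nabla u_i\cdot\nabla v_i$ (which is exactly how $G(\eta)$ is defined at this regularity); telescope; bound the coefficient difference in $L^\infty$ by $\lA\eta_1-\eta_2\rA_{W^{1,\infty}}$; and control $\nabla(u_1-u_2)$, $\nabla(v_1-v_2)$ in $L^2$ by testing the difference equation with the difference itself, which has zero trace. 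This is the same device the paper uses to prove Lemma~\ref{L01} (comparison of two harmonic extensions after a common change of variables), pushed one step further to the operator level by duality against $g\in H^{1/2}$; it buys a proof at exactly Lipschitz regularity, matching the statement, and is in the spirit of the proof in \cite{ABZ3}. Two small points need attention. First, the opening reduction ``by density it suffices to treat smooth $\eta_1,\eta_2,f$'' is both unnecessary and not quite legitimate: smooth functions are not dense in $W^{1,\infty}$, and passing to the limit in $G(\eta_i^\eps)f$ would itself require a continuity-in-$\eta$ statement of the very type being proved; better to run the variational argument directly for Lipschitz $\eta_i$ and $f\in H^{1/2}$, which it supports without change. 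Second, on the unbounded strip one should say explicitly that $u_1-u_2\in H^{1,0}(\Omega_0)$ (zero trace, gradient in $L^2$), hence lies in the closure of compactly supported functions and is an admissible test function, so the energy identity used to absorb $\lA\nabla(u_1-u_2)\rA_{L^2}$ is justified; with that remark, and the analogous one for $v_1-v_2$, the estimate closes as you describe and the duality $\lA w\rA_{H^{-1/2}}=\sup_{\lA g\rA_{H^{1/2}}\le1}\langle w,g\rangle$ yields \eqref{EQ33}.
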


We also have the following 
contraction estimate from~\cite[Theorem~5.2]{ABZ3}, allowing us to control the difference in a higher-order norm. 

\begin{prop}[from~\cite{ABZ3}]\label{P:DN3}Assume that~$s>1+\frac{d}{2}$. 
There exists a non-decreasing function~$\mathcal{F}$ such that
\begin{equation*}
\lA \left[ G(\eta_1)-G(\eta_2)\right] f\rA_{H^{s-\tdm}}\le 
\mathcal{F} \bigl(\lA (\eta_1,\eta_2)\rA_{H^{s+\mez}}\bigr)\lA \eta_1-\eta_2\rA_{H^{s-\mez}}  \lA f\rA_{
H^s},
\end{equation*}
for all~$\eta_1,\eta_2\in H^{s+\mez}(\xT^d)$ and all~$f\in H^s(\xT^d)$.
\end{prop}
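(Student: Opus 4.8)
The plan is to combine the shape-derivative formula of Proposition~\ref{P:shape} with the exact identity $G(\eta)B(\eta)f+\cnx\bigl(V(\eta)f\bigr)=0$, which reduces the sought half-derivative gain to elementary bookkeeping together with one commutator estimate for $G(\eta)$. By density it is enough to treat $\eta_1,\eta_2,f\in C^\infty(\xT^d)$; the passage to the limit in $H^{s-\tdm}$ is then routine, interpolating between the $H^{-\mez}$ contraction estimate of Proposition~\ref{P01} and the boundedness $G(\eta)\colon H^{s}\to H^{s-1}$ from Proposition~\ref{P:DN2}.

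Set $\eta_\tau=\eta_2+\tau(\eta_1-\eta_2)$ and $\zeta=\eta_1-\eta_2$ for $\tau\in[0,1]$, and write $B_\tau=B(\eta_\tau)f$, $V_\tau=V(\eta_\tau)f$, so that $\lA\eta_\tau\rA_{H^{s+\mez}}\le\lA(\eta_1,\eta_2)\rA_{H^{s+\mez}}$ uniformly in $\tau$. By Proposition~\ref{P:shape},
\begin{equation*}
G(\eta_1)f-G(\eta_2)f=-\int_0^1\Bigl(G(\eta_\tau)(B_\tau\zeta)+\cnx(V_\tau\zeta)\Bigr)\dtau,
\end{equation*}
so it suffices to bound $G(\eta_\tau)(B_\tau\zeta)+\cnx(V_\tau\zeta)$ in $H^{s-\tdm}$, uniformly in $\tau$, by $\mathcal F(\lA\eta_\tau\rA_{H^{s+\mez}})\lA\zeta\rA_{H^{s-\mez}}\lA f\rA_{H^s}$. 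Here I would first record the identity $G(\eta)B(\eta)f+\cnx(V(\eta)f)=0$: if $\phi$ is the harmonic extension of $f$, then $\partial_y\phi$ is the harmonic extension of $B(\eta)f$, whence $G(\eta)B(\eta)f=(\partial_y^2\phi-\nabla\eta\cdot\nabla_x\partial_y\phi)\eval$, while the chain rule gives $\cnx(V(\eta)f)=(\Delta_x\phi+\nabla\eta\cdot\nabla_x\partial_y\phi)\eval$, and the two add up to $(\Delta_{x,y}\phi)\eval=0$. Combining this with the Leibniz rule $\cnx(V_\tau\zeta)=\zeta\cnx V_\tau+\nabla\zeta\cdot V_\tau$ and the definition of the commutator $[G(\eta_\tau),\zeta]\,g=G(\eta_\tau)(\zeta g)-\zeta G(\eta_\tau)g$ yields
\begin{equation*}
G(\eta_\tau)(B_\tau\zeta)+\cnx(V_\tau\zeta)=\zeta\bigl(G(\eta_\tau)B_\tau+\cnx V_\tau\bigr)+[G(\eta_\tau),\zeta]B_\tau+\nabla\zeta\cdot V_\tau=[G(\eta_\tau),\zeta]B_\tau+\nabla\zeta\cdot V_\tau.
\end{equation*}

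It remains to estimate the two surviving terms. Lemma~\ref{cancellation} and Proposition~\ref{P:DN2} give $\lA B_\tau\rA_{H^{s-1}}+\lA V_\tau\rA_{H^{s-1}}\le\mathcal F(\lA\eta_\tau\rA_{H^{s+\mez}})\lA f\rA_{H^s}$, and since $s>1+d/2$ the Sobolev product rule gives $\lA\nabla\zeta\cdot V_\tau\rA_{H^{s-\tdm}}\lesssim\lA\zeta\rA_{H^{s-\mez}}\lA V_\tau\rA_{H^{s-1}}$, which is admissible. The only genuinely nontrivial ingredient is the commutator estimate
\begin{equation*}
\lA[G(\eta),\zeta]\,g\rA_{H^{s-\tdm}}\le\mathcal F(\lA\eta\rA_{H^{s+\mez}})\lA\zeta\rA_{H^{s-\mez}}\lA g\rA_{H^{s-1}},
\end{equation*}
a Kato--Ponce-type bound for the Dirichlet-to-Neumann operator; applied with $g=B_\tau$ and integrated in $\tau$, it closes the argument.

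The crux of the proof is thus this commutator estimate, and it is here that the paradifferential machinery of \cite{ABZ1,ABZ3} enters, as well as the reason $\mathcal F$ must be allowed to depend on $\lA\eta\rA_{H^{s+\mez}}$ (one derivative above $H^{s-\mez}$): paralinearising $G(\eta)=T_{\lambda(\eta)}+R(\eta)$ with principal symbol $\lambda(\eta;x,\xi)=\sqrt{(1+|\nabla\eta|^2)|\xi|^2-(\nabla\eta\cdot\xi)^2}$, the commutator $[T_{\lambda(\eta)},\zeta]$ is a paradifferential operator of order $0$ whose symbol is proportional to $\partial_\xi\lambda(\eta)\cdot\nabla\zeta$ — so it carries exactly $\nabla\eta\in H^{s-\mez}$ and $\nabla\zeta\in H^{s-\tdm}$, which is what makes the estimate tame — while the smoothing remainder $R(\eta)$ and the lower-order terms of the symbolic calculus are controlled by the tame bounds of \cite{ABZ3}. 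Without this structure one only gets $[G(\eta),\zeta]g\in H^{s-2}$, half a derivative short. (An equivalent, and arguably more robust, alternative is to avoid differentiating in $\tau$ altogether: flatten both domains $\{y<\eta_j\}$ to a fixed half-space by a regularised change of variables and apply the tame elliptic estimates of \cite{ABZ3} to the difference of the pulled-back harmonic extensions of $f$, whose source term is linear in $\eta_1-\eta_2$.)
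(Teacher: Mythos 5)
First, a point of comparison: the paper offers no proof of this proposition at all — it is imported verbatim from \cite[Theorem~5.2]{ABZ3} — so there is no internal argument to match. In \cite{ABZ3} the estimate is obtained essentially by tame elliptic estimates for the difference of the two flattened boundary-value problems, i.e.\ much closer to the alternative you sketch in your closing parenthesis than to the shape-derivative route you actually develop.

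Your reduction itself is sound: the identity $G(\eta)B(\eta)f+\cnx\bigl(V(\eta)f\bigr)=0$ is correct (for smooth data $\partial_y\phi$ is indeed the harmonic extension of $B(\eta)f$), the integration of the shape derivative along $\eta_\tau$ is legitimate for smooth data, the bound $\Vert\nabla\zeta\cdot V_\tau\Vert_{H^{s-\tdm}}\lesssim\Vert\zeta\Vert_{H^{s-\mez}}\Vert V_\tau\Vert_{H^{s-1}}$ holds precisely because $s>1+d/2$, and the limiting argument via Proposition~\ref{P01} is routine. The genuine gap is the commutator estimate $\Vert[G(\eta),\zeta]g\Vert_{H^{s-\tdm}}\le\mathcal F(\Vert\eta\Vert_{H^{s+\mez}})\Vert\zeta\Vert_{H^{s-\mez}}\Vert g\Vert_{H^{s-1}}$, which you yourself call the crux and then do not prove: it carries the entire content of the proposition, namely the half-derivative gain with only $H^{s-\mez}$ control of $\zeta$. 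Moreover, the one-line justification you offer is not correct as stated: in the range $1+d/2<s\le\tdm+d/2$ one only has $\nabla\zeta\in H^{s-\tdm}$, which is \emph{not} contained in $L^\infty$, so $[T_{\lambda},\zeta]$ is not an order-zero paradifferential operator with symbol $\partial_\xi\lambda\cdot\nabla\zeta$ in any standard class. A correct argument must (i) replace multiplication by $\zeta$ with its paraproduct decomposition and estimate each resulting piece ($[T_\lambda,T_\zeta]g$, $T_{G(\eta)g}\zeta$, the Bony remainders, \dots), where the numerology closes only thanks to $s>1+d/2$, and (ii) handle $G(\eta)-T_\lambda$ applied to $\zeta g$ and to $g$ via the remainder estimate of Proposition~\ref{coro:paraDN1} used at the shifted regularity $s+\mez$ of $\eta$, so that one may take $\eps=\mez$ (note also that the paper states that proposition only for $d=1$, whereas the present statement is for all $d$). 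That is work of the same order as the cited theorem itself, so as written your proposal reduces the statement to an unproved lemma rather than proving it.
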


%

Using the Fourier transform, it is easy to check that $G(0)=\la D_x\ra$. Furthermore, it 
is possible to compare $G(\eta)$ and~$\la D_x\ra$ 
thanks to the Bony
paradifferential calculus~\cite{Bony}, using in addition Alinhac's paracomposition operators~\cite{Alipara}. 
The first results in this direction are due to 
Alazard-M\'etivier~\cite{AM}, and 
Alazard-Burq-Zuily~\cite{ABZ3}, following an earlier work 
by~Lannes~\cite{LannesJAMS}. In particular, Proposition~$3.13$ in \cite{ABZ3} asserts that one can compare $G(\eta)$ to an explicit operator for any $\eta$ which is, by the Morrey embedding, in the H\"older space $C^{1+\epsilon}$ for some~$\epsilon>0$. 

\begin{prop}[from~\cite{ABZ3}]\label{coro:paraDN1}
Assume that $d=1$ and consider real numbers $s$, $\sigma$, and $\eps$ such that
\begin{equation*}
s>\frac{3}{2},\qquad \frac{1}{2}\leq \sigma \leq s-\mez,\qquad 
0<\eps\leq \mez, \qquad \eps< s-\frac{3}{2}.
\end{equation*}
Then there exists a non-decreasing function~$\mathcal{F}\colon\xR_+\rightarrow\xR_+$ such that 
\begin{equation*}
R(\eta)f\defn G(\eta)f- \la D_x\ra f
\end{equation*}
satisfies
\begin{equation*}
\lA R(\eta)f\rA_{H^{\sigma-1+\eps}(\xT)}\le \mathcal{F} \bigl(\| \eta \|_{H^{s}(\xT)}\bigr)\lA f\rA_{H^{\sigma}(\xT)}.
\end{equation*}
\end{prop}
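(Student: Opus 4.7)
The plan is to combine Bony's paradifferential calculus with a straightening change of variables, following the strategy underlying the results of \cite{AM,ABZ3}. First I would flatten the free surface via a diffeomorphism $(x,z)\mapsto(x,\rho(x,z))$ sending $\xT\times\{z<0\}$ onto $\Omega=\{y<\eta(x)\}$ with $\rho(x,0)=\eta(x)$. Pulling the harmonic extension $\phi$ back to the half-strip, the function $\varphi(x,z)=\phi(x,\rho(x,z))$ satisfies an elliptic equation $(\partial_z^2+\alpha\,\partial_x^2+2\beta\,\partial_x\partial_z+\gamma\,\partial_z)\varphi=0$ whose coefficients are smooth functions of $\partial_x\rho$ and $1/\partial_z\rho$. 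The Dirichlet-to-Neumann operator can then be read at $z=0$ as an explicit tangential combination of $\partial_z\varphi$ and $\partial_x\varphi$.

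Next I would paralinearize this elliptic operator and factor it, in the paradifferential sense, as the product of a forward and a backward parabolic-type operator in the variable $z$. This yields a representation of $G(\eta)$ as a paradifferential operator with principal symbol
\begin{equation*}
\lambda(x,\xi)=\sqrt{(1+(\partial_x\eta)^2)\xi^2-(\partial_x\eta\,\xi)^2}.
\end{equation*}
The simplification specific to $d=1$ is that $\lambda$ reduces identically to $|\xi|$, independent of $x$; the corresponding paradifferential operator $T_\lambda$ thus coincides with the Fourier multiplier $\Dx$ up to a regularizing remainder. Consequently $R(\eta)=G(\eta)-\Dx$ is the sum of such a remainder and of strictly lower-order paradifferential terms whose symbols depend on derivatives of~$\eta$.

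Converting this formal statement into the Sobolev estimate of the proposition requires paradifferential symbolic calculus with nonsmooth symbols. Since $\eta\in H^s$ with $s>\tdm$, Sobolev embedding places $\partial_x\eta$ in the H\"older class $C^{s-\tdm}$, and the composition and remainder estimates for paradifferential operators with $C^\rho$ coefficients yield a gain of $\min(s-\tdm,\mez)$ derivatives over the principal part $\Dx$. This accounts for the two restrictions $\eps<s-\tdm$ and $\eps\le\mez$ in the hypothesis. The non-decreasing dependence of $\mathcal{F}$ on $\|\eta\|_{H^s}$ comes from the tame estimates satisfied by paraproducts, together with Proposition~\ref{P:DN2} for the control of the low-frequency contribution.

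The main obstacle is the nonsmooth change of variables: the diffeomorphism $\rho$ inherits only Sobolev regularity from $\eta$, and naive conjugation of paradifferential operators by such maps loses derivatives. I would handle this via Alinhac's paracomposition operators, which conjugate paradifferential operators by maps of limited regularity with a quantitative control on the remainder. A related technical point is tracking the dependence on $\eta$ through the coefficients $\alpha,\beta,\gamma$ in the flattened equation; here Moser-type estimates, combined with the algebra property $H^s\cdot H^s\subset H^s$ (since $s>\tdm>\mez$), close the argument.
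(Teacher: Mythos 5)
Your sketch follows exactly the route the paper relies on: it simply cites Proposition~3.13 of \cite{ABZ3} (paralinearization of $G(\eta)$ as $T_\lambda$ plus an $\eps$-smoothing remainder, proved via flattening, paradifferential factorization of the elliptic equation, and Alinhac's paracomposition) and observes, as you do, that in $d=1$ the symbol $\lambda=((1+(\partial_x\eta)^2)\xi^2-(\partial_x\eta\,\xi)^2)^{1/2}$ reduces to $|\xi|$, so that $T_\lambda-\la D_x\ra$ is smoothing and $R(\eta)=G(\eta)-\la D_x\ra$ inherits the stated bound, with the restrictions $\eps\le\mez$, $\eps<s-\tdm$ coming from the limited $C^{s-\tdm}$ regularity of $\partial_x\eta$. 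This matches the paper's treatment (including the remark that the $\xR$ proof carries over to the periodic setting), so there is nothing to correct.
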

\begin{rema}
In \cite{ABZ3}, this result is proven for Sobolev spaces over $\xR$,
but the same proof applies for periodic functions. Also, in \cite{ABZ3},
this result is proven in arbitrary dimension and $R(\eta)$ is defined as 
$R(\eta)f\defn G(\eta)f- T_\lambda f$ where $T_\lambda$ is the paradifferential operator with the symbol $\lambda=((1+|\nabla\eta|^2)|\xi|^2-(\nabla\eta\cdot\xi)^2)^{1/2}$. In the space dimension $d=1$, note that 
$\lambda=|\xi|$, and then $T_\lambda-\la D_x\ra$ is a smoothing operator, 
bounded from $H^s$ to $H^r$ for any $s,r\in\xR$.
\end{rema}

When passing to the limit, we need to obtain the convergence of the
harmonic function $\phi$ using the convergence of $\eta$ and~$\psi$.
To do this, we shall use an estimate comparing the harmonic extensions associated to different functions $\eta$ and~$\psi$. More precisely, 
given $(\eta_k,\psi_k)$, for $k=1,2$, we would like to compare
the two harmonic extensions~$\phi_k$. Since they are defined in different domains, we change 
a variable in the second solution so they are both defined on
$\{z<\eta_1\}$. Let us set
  \begin{align*}
  \begin{split}
   \tilde \phi_2(x,y)
   =
   \phi_2\bigl(x,y-(\eta_1(x)-\eta_2(x))\bigr)
  \end{split}
  \end{align*}
and, for consistency, we also introduce
$
   \tilde \phi_1(x,y)
   =
   \phi_1(x,y)
$.
We choose to work on the domain $\Omega_1\defn \{y<\eta_1\}$, rather than on the flat domain
$\{y<0\}$, so that the leading part of the equation is the Laplacian.

\begin{lemm}
\label{L01}
Consider
  \begin{equation*}
   (\eta_k, \psi_k)
   \in W^{1,\infty}(\xT^d)\times H^{\mez}(\xT^d)
    \comma k=1,2   
  \end{equation*}
with the corresponding functions $\phi_k$ and~$\tilde \phi_k$, for $k=1,2$.
Then, there exists a non-decreasing function $C\colon \xR_+\to\xR_+$
such that
  \begin{align}
  \begin{split}
   \Vert \tilde\phi_k\Vert_{H^{1,0}(\Omega_1)}
   &\le 
   C\big(\Vert ( \nabla\eta_1,\nabla\eta_2)\Vert_{L^{\infty}}\big)
   \Vert \psi_k\Vert_{H^{\mez}}
   ,
  \end{split}
   \label{EQ41}
  \end{align}
for $k=1,2$, 
and
  \begin{align}
  \begin{split}
   \Vert \tilde \phi_1-\tilde \phi_2\Vert_{H^{1,0}(\Omega_1)}
   &\le 
      M
   \Vert \eta_1-\eta_2\Vert_{W^{1,\infty}}
   +M
   \Vert \psi_1-\psi_2\Vert_{H^{\mez}}
  \end{split}
   \label{EQ44}
  \end{align}
where$$
M=C\big(\Vert ( \nabla\eta_1,\nabla\eta_2)\Vert_{L^{\infty}}+\Vert ( \psi_1,\psi_2)\Vert_{H^{\mez}}\big).
$$
\end{lemm}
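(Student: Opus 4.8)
The plan is to establish \eqref{EQ41} and \eqref{EQ44} by writing down the elliptic problems satisfied by $\tilde\phi_k$ and $\tilde\phi_1-\tilde\phi_2$ on the common domain $\Omega_1$, and then running energy estimates in the space $H^{1,0}(\Omega_1)$. First I would treat \eqref{EQ41}: the bound on $\tilde\phi_1=\phi_1$ is immediate from \eqref{EQ41a} in Proposition \ref{Lemma:decayinfty2}, while for $\tilde\phi_2$ one notes that the change of variables $(x,y)\mapsto(x,y-(\eta_1(x)-\eta_2(x)))$ is a bi-Lipschitz diffeomorphism from $\Omega_1$ onto $\Omega_2$ with Jacobian $1$, whose Lipschitz constants are controlled by $\lVert(\nabla\eta_1,\nabla\eta_2)\rVert_{L^\infty}$; pulling back $\nabla_{x,y}\phi_2\in L^2(\Omega_2)$ gives $\nabla_{x,y}\tilde\phi_2\in L^2(\Omega_1)$ with the stated bound, again by \eqref{EQ41a}. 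It is convenient here to record the divergence-form equation $\cnx_{x,y}(A_k\nabla_{x,y}\tilde\phi_k)=0$ in $\Omega_1$, where $A_k$ is the symmetric matrix produced by the change of variables ($A_1=\mathrm{Id}$), since it is uniformly elliptic with bounds depending only on $\lVert(\nabla\eta_1,\nabla\eta_2)\rVert_{L^\infty}$, and $\tilde\phi_k\eval=\psi_k$ in the sense of trace.

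For \eqref{EQ44}, set $w=\tilde\phi_1-\tilde\phi_2$. Subtracting the two equations gives
\begin{equation*}
\cnx_{x,y}(A_1\nabla_{x,y}w)=\cnx_{x,y}\big((A_2-A_1)\nabla_{x,y}\tilde\phi_2\big)\quad\text{in }\Omega_1,
\end{equation*}
with boundary trace $w\eval=\psi_1-\psi_2$. The key structural point is that $A_2-A_1$ is a smooth function of $(\nabla\eta_1,\nabla\eta_2)$ vanishing when $\eta_1=\eta_2$, so that $\lVert A_2-A_1\rVert_{L^\infty}\lesssim C(\lVert(\nabla\eta_1,\nabla\eta_2)\rVert_{L^\infty})\lVert\eta_1-\eta_2\rVert_{W^{1,\infty}}$. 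To handle the inhomogeneous boundary data I would first lift $\psi_1-\psi_2$ by a bounded extension operator $E\colon H^{1/2}(\xT^d)\to H^{1,0}(\Omega_1)$ (with norm depending only on $\lVert\nabla\eta_1\rVert_{L^\infty}$), reduce to the case of zero trace for $w-E(\psi_1-\psi_2)$, and then test the equation against $w-E(\psi_1-\psi_2)$ itself. Using the uniform ellipticity of $A_1$ (coercivity of the left side in the $H^{1,0}$ norm), Cauchy–Schwarz on the right side, the bound just mentioned for $A_2-A_1$, and the bound \eqref{EQ41} for $\lVert\tilde\phi_2\rVert_{H^{1,0}(\Omega_1)}\lesssim C(\cdots)\lVert\psi_2\rVert_{H^{1/2}}$, an absorption argument yields
\begin{equation*}
\lVert\nabla_{x,y}w\rVert_{L^2(\Omega_1)}\lesssim C\big(\lVert(\nabla\eta_1,\nabla\eta_2)\rVert_{L^\infty}\big)\Big(\lVert\psi_2\rVert_{H^{1/2}}\lVert\eta_1-\eta_2\rVert_{W^{1,\infty}}+\lVert\psi_1-\psi_2\rVert_{H^{1/2}}\Big),
\end{equation*}
which is exactly \eqref{EQ44} after absorbing $\lVert\psi_2\rVert_{H^{1/2}}$ into the constant $M$.

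The main obstacle is a technical one: making the integration-by-parts / testing argument rigorous in the weighted Hilbert space $H^{1,0}(\Omega_1)$ on an unbounded domain. One must check that the boundary term at $y=-\infty$ genuinely vanishes — this is where the decay \eqref{EQ61} and the limit statement in Proposition \ref{Lemma:decayinfty2} enter — and that the density of $\mathscr{D}_0$ in the right topology legitimizes using $w$ itself (minus its lift) as a test function; the pairing $\langle\cnx_{x,y}((A_2-A_1)\nabla\tilde\phi_2),\,\cdot\,\rangle$ must be interpreted via the $L^2$ gradient. A secondary point of care is verifying that the pulled-back coefficient matrix $A_2$ is genuinely only a function of $\nabla\eta_1$ and $\nabla\eta_2$ (not of $\eta_1-\eta_2$ itself), which is what makes $A_2-A_1$ small in terms of the $W^{1,\infty}$ difference; this is a direct computation with the chain rule for the shear change of variables. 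Everything else is routine elliptic energy estimates.
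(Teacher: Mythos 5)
Your proposal is correct and follows essentially the same route as the paper: the same shear change of variables onto the common domain $\Omega_1$, the observation that the pulled-back equation has a divergence-form right-hand side whose coefficients depend only on $\nabla(\eta_1-\eta_2)$ (the paper writes this as $\Delta_{x,y}\tilde\phi_2=\cnx f+\partial_y g$, which is exactly your $\cnx_{x,y}\bigl((A_2-I)\nabla_{x,y}\tilde\phi_2\bigr)$ up to sign), and then an $H^{1,0}(\Omega_1)$ energy estimate for the difference with boundary datum $\psi_1-\psi_2$. The only presentational difference is that the paper quotes the variational estimate for $-\Delta_{x,y}\phi=\cnx f+\partial_y g$ with given trace, whereas you sketch its proof via a lifting and coercivity; this is the same argument.
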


\begin{proof}
Since $\phi_1=\tilde{\phi}_1$ by notation, when $k=1$, the inequality \e{EQ41} is just~\e{EQ41a}.
For $k=2$, 
set $\eta=\eta_2-\eta_1$. 
In this proof, given a function $f=f(x,y)$, we denote
$$
\tilde{f}(x,y)=f(x,y+\eta(x))
$$
(except of course when $f=\phi_1$). 
To avoid any confusion with notation, recall that $\nabla=\nabla_x$ and $\Delta=\Delta_x$; in other words, these differential operators act only on the $x$ variables. We denote by $\nabla_{x,y}$ and $\Delta_{x,y}$ the operators acting on the full $(x, y)$ variables.

To prove \e{EQ41} when $k=2$, we begin by using a change of variables $(x,y)\mapsto (x,y+\eta(x))$
to get
\be\label{EQ40.5}
\begin{aligned}
\iint_{\Omega_1}\big\vert \widetilde{{(\nabla_{x,y}{\phi_2})}}\big\vert^2\dydx
&=\iint_{\{y<\eta_2\}}\la \nabla_{x,y}\phi_2\ra^2\dydx
\\&
\le  C(\Vert \nabla\eta_2\Vert_{L^{\infty}})
   \Vert \psi_2\Vert_{H^{\mez}}.
\end{aligned}
\ee
Then we use the chain rule to write
\be\label{EQ40.7}
\widetilde{(\partial_y\phi_2)}
=\partial_y \tilde{\phi_2},\quad
\widetilde{(\nabla\phi_2)}
=\nabla\tilde{\phi}_2-\partial_y \tilde{\phi_2}\nabla\eta,
\ee
and by the triangle inequality, we deduce from~\e{EQ40.5} that
 \begin{align}
  \begin{split}
   \Vert \tilde\phi_2\Vert_{H^{1,0}(\Omega_1)}
   &\le 
   C\big(\Vert ( \nabla\eta_1,\nabla\eta_2)\Vert_{L^{\infty}}\big)
   \Vert \psi_2\Vert_{H^{\mez}}.
  \end{split}
   \label{EQ41b}
  \end{align}
Now, by \e{EQ40.7}, we have 
$\partial_y^2\tilde{\phi_2}=\widetilde{(\partial_y^2\phi_2)}$
and 
\begin{align*}
\Delta \tilde{\phi_2}&=\widetilde{(\Delta\phi_2)}
+2\widetilde{({\nabla_x\partial_y\phi_2})}\cdot\nabla\eta
+
\widetilde{({\partial_y\phi_2})}\Delta\eta
+
\widetilde{({\partial_y^2\phi_2})}|\nabla\eta|^2
\\
&=\widetilde{(\Delta\phi_2)}
+2\partial_y\big(\nabla\tilde{\phi_2}-\partial_y\tilde{\phi_2}\nabla\eta)\cdot\nabla\eta+
\partial_y\tilde{\phi_2}\Delta\eta
+\partial_y^2\tilde{\phi_2}|\nabla\eta|^2.
\end{align*}
Next, observe that
$$
\partial_y\tilde{\phi_2}\Delta\eta=
\cnx \big(\partial_y\tilde{\phi_2}\nabla\eta\big)
-\partial_y\big(\nabla\tilde{\phi_2}\cdot\nabla\eta\big)
$$
and then use $\Delta_{x,y}\phi_2=0$ to conclude that
$$
\Delta_{x,y}\tilde{\phi_2}=
\partial_y\Big(\nabla\tilde{\phi_2}\cdot\nabla\eta
-\partial_y\tilde{\phi_2}|\nabla\eta|^2\Big)
+\cnx \big(\partial_y\tilde{\phi_2}\nabla\eta\big).
$$
Setting
$$
f=\partial_y\tilde{\phi_2}\nabla\eta,\quad 
g=\nabla\tilde{\phi_2}\cdot\nabla\eta
-\partial_y\tilde{\phi_2}|\nabla\eta|^2,
$$
the difference $\tilde{\phi}_2-\tilde{\phi}_1$ obeys
  \begin{equation}\llabel{EQ77}
 \left\{
  \begin{aligned}
   &
   \Delta_{x,y} (\tphi_2-\tphi_1)
   =
   \cnx f+\partial_{y} g   \inin{\{y<\eta_1\}},
   \\&
   (   \tphi_2-\tphi_1)
   \restr{y=\eta_1}
   = \psi_2-\psi_1.
  \end{aligned}
  \right.
\end{equation}

Similarly to
Proposition~\ref{Lemma:decayinfty},
given $f, g\in L^{2}(\Omega_1)$,
there exists a 
unique variational solution 
$\phi$ to the problem
$$
-\Delta_{x,y}\phi=\cnx f+\partial_y g \quad\text{in }\Omega_1,\quad 
  \phi\arrowvert_{y=\eta_1}=\psi
  ,
$$
which satisfies the inequality
  \begin{align*}
    \begin{split}
   \Vert \phi\Vert_{H^{1,0}(\Omega_1)}
 \le 
   C(\Vert \nabla\eta_1\Vert_{L^{\infty}})
   (
   \Vert (f,g)\Vert_{L^{2}(\Omega_1)}
   +
   \Vert \psi\Vert_{H^{\mez}}
   )
   .
  \end{split}
  \end{align*}
By combining this estimate with \e{EQ41b}, we immediately get \e{EQ44}, and the lemma is proven.
\end{proof}

\subsection{A Craig-Sulem formulation}

The following statement provides a variant 
of the so-called Craig--Sulem formulation (see~\cite{CrSu,CrSuSu}).
It is derived under the assumption that all the relevant quantities are
smooth.

\begin{prop}\label{propN}
Set
  \begin{equation}\label{N=}
   N(\eta,\psi) = \mez \vert V\vert^2 - \mez B^2 + B(V \cdot\nabla \eta)
   ,
  \end{equation}
and introduce the linear operator $L$ defined by
\begin{equation}
L\eta=g \eta+\Delta^2\eta.
\label{EQ125}
\end{equation}
Then the fluid-structure problem reads
  \begin{equation}
   \left\{
   \begin{aligned}
   &\partial_t \eta = G(\eta)\psi,\\
   &\partial_t \psi +\partial_t^2\eta+N(\eta,\psi) +L\eta= 0.
   \end{aligned}
   \right.
   \label{EQ28}
  \end{equation}
In addition, $N$ may be expressed as 
  \begin{equation}\label{N2=}
   N(\eta,\psi) =  \frac{1}{2} \vert \nabla\psi \vert^{2} -\frac{1}{2} B ( G(\eta) \psi + \nabla \psi \cdot \nabla\eta)
  .
  \end{equation}
  \end{prop}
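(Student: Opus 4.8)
The plan is to derive \eqref{EQ28} and the two formulas for $N(\eta,\psi)$ from the irrotational Euler system, working throughout with the smooth harmonic extension $\phi$ furnished by Proposition~\ref{Lemma:decayinfty}. First, the kinematic equation: differentiating $\psi(t,x)=\phi(t,x,\eta(t,x))$ and using the chain rule gives $\partial_t\psi=(\partial_t\phi)\eval+(\partial_y\phi)\eval\,\partial_t\eta$, while the transport condition $\partial_t\eta=\sqrt{1+|\nabla\eta|^2}\,v\cdot n$ together with the identity in Remark~\ref{R01}(ii) rewrites as $\partial_t\eta=(\partial_y\phi-\nabla\eta\cdot\nabla\phi)\eval=G(\eta)\psi$, which is the first equation of \eqref{EQ28}. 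Here I use that $v=\nabla_{x,y}\phi$ and the definition of $G(\eta)$.

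Second, the Bernoulli equation. Evaluating $\partial_t\phi+\tfrac12|\nabla_{x,y}\phi|^2+P+gy=0$ at $y=\eta(t,x)$ and substituting $(\partial_t\phi)\eval=\partial_t\psi-B\,\partial_t\eta$ yields
\begin{equation*}
\partial_t\psi - B\,\partial_t\eta + \tfrac12\bigl(|V|^2+B^2\bigr) + f + g\eta = 0,
\end{equation*}
where $f=P\eval$ and I used $(\nabla_{x,y}\phi)\eval=(V,B)$. Now invoke the plate equation \eqref{EQ110} in the form $f=\partial_t^2\eta+\Delta^2\eta$ to replace $f$, and note $g\eta+\Delta^2\eta=L\eta$. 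It remains to identify $-B\,\partial_t\eta+\tfrac12|V|^2+\tfrac12 B^2$ with $N(\eta,\psi)$ as defined in \eqref{N=}. Since $\partial_t\eta=G(\eta)\psi=B-V\cdot\nabla\eta$ by Lemma~\ref{cancellation} (combining \eqref{defi:BV} and \eqref{defi:BV2}), we get $-B\,\partial_t\eta=-B^2+B(V\cdot\nabla\eta)$, so the sum is $\tfrac12|V|^2-\tfrac12 B^2+B(V\cdot\nabla\eta)$, which is exactly \eqref{N=}. This gives the second equation of \eqref{EQ28}.

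Third, the equivalence with \eqref{N2=}. Starting from \eqref{N=} I substitute $V=\nabla\psi-B\nabla\eta$ from \eqref{defi:BV2}: then $|V|^2=|\nabla\psi|^2-2B\,\nabla\psi\cdot\nabla\eta+B^2|\nabla\eta|^2$ and $V\cdot\nabla\eta=\nabla\psi\cdot\nabla\eta-B|\nabla\eta|^2$, so
\begin{equation*}
N=\tfrac12|\nabla\psi|^2-B\,\nabla\psi\cdot\nabla\eta+\tfrac12 B^2|\nabla\eta|^2-\tfrac12 B^2+B\nabla\psi\cdot\nabla\eta-B^2|\nabla\eta|^2=\tfrac12|\nabla\psi|^2-\tfrac12 B^2(1+|\nabla\eta|^2).
\end{equation*}
Finally, using \eqref{defi:BV} in the form $B(1+|\nabla\eta|^2)=G(\eta)\psi+\nabla\eta\cdot\nabla\psi$, the term $\tfrac12 B^2(1+|\nabla\eta|^2)=\tfrac12 B\bigl(G(\eta)\psi+\nabla\psi\cdot\nabla\eta\bigr)$, yielding \eqref{N2=}. (This is also consistent with the introduction's formula \eqref{I5} after one more application of \eqref{defi:BV}.)

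I do not anticipate a serious obstacle here; the proof is a bookkeeping exercise in the chain rule and the algebraic identities of Lemma~\ref{cancellation}. The one point requiring a little care is the justification that $\partial_t\phi$ has a well-defined trace on $\Sigma$ and that the chain-rule manipulations at $y=\eta(t,x)$ are legitimate — but under the blanket smoothness hypothesis ($\eta,\psi\in C^\infty$, hence $\phi\in C^\infty(\overline\Omega)$ by Proposition~\ref{Lemma:decayinfty}) this is immediate, and one should simply remark that all identities are pointwise identities between smooth functions of $(t,x)$.
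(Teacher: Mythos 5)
Your proof is correct and follows essentially the same route as the paper: evaluate the Bernoulli equation at $y=\eta$, use the chain rule $(\partial_t\phi)\arrowvert_{y=\eta}=\partial_t\psi-B\,\partial_t\eta$, substitute the plate equation for the pressure trace, apply $\partial_t\eta=B-V\cdot\nabla\eta$ to obtain \eqref{N=}, and then use \eqref{defi:BV}--\eqref{defi:BV2} to pass to \eqref{N2=}. The only difference is that you carry out the algebra for \eqref{N2=} in full detail whereas the paper simply indicates which identities to use; your computation is correct.
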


\begin{proof}
By the definition of $G(\eta)\psi$, 
we get the first equation in \eqref{EQ28} directly from the kinematic boundary condition.

Now, by evaluating  the equation
\begin{equation*}
\partial_t \phi+ \frac{1}{2} \la \nabla_{x,y} \phi \ra^2 +  P +g y =0
\end{equation*}
on the free surface, we obtain
\begin{equation*}
\partial_t\psi-B\partial_t\eta
+\mez |V|^2+\mez B^2+\partial_t^2\eta+L\eta=0.
\end{equation*}
Since $\partial_t \eta=G(\eta)\psi=B-V\cdot\nabla\eta$, it follows that
\begin{equation*}
\partial_t\psi+\partial_t^2\eta +\mez |V|^2-\mez B^2+B (V\cdot \nabla\eta)+L\eta=0,
\end{equation*}
which gives the second equation in~\eqref{EQ28}. The identity \eqref{N2=} can be obtained from \eqref{N=} using the expressions \eqref{defi:BV} and~\eqref{defi:BV2}.
\end{proof}

Here we restate Proposition~\ref{IP02}.

\begin{prop}
\label{P02}
Let $s>d/2+1$. For all $\eta\in H^{s+1/2}(\xT^d)$ and all 
$\psi\in H^{s}(\xT^d)$, there holds
  \begin{equation}
   N(\eta,\psi)=\cnx \int_{-\infty}^\eta \partial_y\phi\nabla \phi\dy
   \label{EQ31}
  \end{equation}
in the $L^{1}(\mathbb{T}^{d})$-sense.
\end{prop}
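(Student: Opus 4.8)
The plan is to verify the identity \eqref{EQ31} by a direct computation using the harmonicity of $\phi$ together with the trace identities \eqref{defi:BV} and \eqref{defi:BV2} from Lemma~\ref{cancellation}. The key observation is that the integrand $\partial_y\phi\,\nabla\phi$ is a smooth vector field on $\{y<\eta\}$ (by Proposition~\ref{Lemma:decayinfty2}, since $\eta\in H^{s+1/2}$ with $s>d/2+1$ is smooth enough for the harmonic extension to be $C^1$ up to the boundary, and the decay \eqref{EQ61} justifies the integrability down to $y=-\infty$), so the divergence can be computed by differentiating under the integral sign. Writing $F(x)=\int_{-\infty}^{\eta(x)}\partial_y\phi(x,y)\,\nabla\phi(x,y)\,\dy$, I would compute $\cnx F$ by the Leibniz rule: a boundary term coming from differentiating the upper limit $\eta(x)$, plus the interior term $\int_{-\infty}^\eta \cnx(\partial_y\phi\,\nabla\phi)\,\dy$.

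The boundary term is $\bigl(\partial_y\phi\,\nabla\phi\bigr)\big|_{y=\eta}\cdot\nabla\eta = B\,(V\cdot\nabla\eta)$, using the definitions of $B$ and $V$. For the interior term, I expand $\cnx(\partial_y\phi\,\nabla\phi)=\nabla\partial_y\phi\cdot\nabla\phi+\partial_y\phi\,\Delta\phi$. Since $\Delta_{x,y}\phi=0$ we have $\Delta\phi=-\partial_y^2\phi$, so the integrand becomes $\nabla\partial_y\phi\cdot\nabla\phi-\partial_y\phi\,\partial_y^2\phi=\partial_y\bigl(\tfrac12|\nabla\phi|^2\bigr)-\partial_y\bigl(\tfrac12(\partial_y\phi)^2\bigr)=\partial_y\bigl(\tfrac12|\nabla\phi|^2-\tfrac12(\partial_y\phi)^2\bigr)$. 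Here I must be slightly careful: $\nabla$ acts on $x$ only, so $\nabla\partial_y\phi\cdot\nabla\phi=\partial_y(\tfrac12|\nabla\phi|^2)$ genuinely holds. Integrating in $y$ from $-\infty$ to $\eta$ and using the decay \eqref{EQ61} (which kills the contribution at $y=-\infty$), the interior term equals $\tfrac12|\nabla\phi|^2\big|_{y=\eta}-\tfrac12(\partial_y\phi)^2\big|_{y=\eta}=\tfrac12|V|^2-\tfrac12 B^2$. Adding the two contributions gives $\cnx F=\tfrac12|V|^2-\tfrac12 B^2+B(V\cdot\nabla\eta)$, which is exactly $N(\eta,\psi)$ by the definition \eqref{N=}.

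To make this rigorous and obtain the identity in the $L^1(\xT^d)$-sense, I would first establish the needed regularity: by Proposition~\ref{Lemma:decayinfty} (or its limited-regularity counterpart), for $\eta\in H^{s+1/2}$ and $\psi\in H^s$ with $s>d/2+1$, the extension $\phi$ is smooth in the open domain with $\nabla_{x,y}\phi$ and its derivatives lying in the appropriate $L^2$ spaces, and the traces $B=B(\eta)\psi$, $V=V(\eta)\psi$ are well-defined functions (indeed in $H^{s-1}$). In particular $N(\eta,\psi)\in L^1(\xT^d)$, and the right-hand side of \eqref{EQ31} is a priori only a distribution (divergence of the $L^1$ vector field $x\mapsto F(x)$); the claim is that it is in fact the $L^1$ function $N(\eta,\psi)$. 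The cleanest way to phrase this is to test against an arbitrary $\chi\in C^\infty(\xT^d)$: $\langle \cnx F,\chi\rangle = -\int_{\xT^d} F\cdot\nabla\chi\,\dx = -\int_{\xT^d}\!\int_{-\infty}^{\eta(x)}\partial_y\phi\,\nabla\phi\cdot\nabla\chi(x)\,\dydx$, then push $\nabla\chi$ through the $y$-integral, integrate by parts in $x$ over the fixed-$x$ slices (picking up the $\nabla\eta$ boundary term), and use $\Delta_{x,y}\phi=0$ and the decay at $y=-\infty$ exactly as above. Fubini is justified because $\partial_y\phi\,\nabla\phi\in L^1(\Omega)$.

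\textbf{Main obstacle.} The computation itself is short; the real work is the justification of differentiation under the integral sign and the integration by parts in the presence of the moving upper limit $y=\eta(x)$ and the unbounded domain. Concretely, one must ensure that all the boundary terms at $y=\eta$ are genuine traces of $L^1_{loc}$ (indeed $H^{s-1}$) functions — so that, e.g., $(\partial_y\phi\,\nabla\phi)|_{y=\eta}=B\,V$ makes sense pointwise a.e. — and that the terms at $y=-\infty$ vanish, which is where the decay estimate \eqref{EQ61} of Proposition~\ref{Lemma:decayinfty2} is essential. A convenient device is to flatten the domain via $(x,y)\mapsto(x,y-\eta(x))$ so that the upper limit becomes the constant $0$ and the derivative of the limit is traded for extra coefficients involving $\nabla\eta$ in the transformed Laplacian; then differentiation under the integral and integration by parts are completely standard on the fixed strip $\{y<0\}$, and one transports the result back. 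Either way, the only substantive point is bookkeeping of the boundary terms, and the identity $\Delta\phi=-\partial_y^2\phi$ together with the algebraic identities \eqref{defi:BV}--\eqref{defi:BV2} (needed only to recognize the result as $N$ in the form \eqref{N2=} if one prefers that expression) does the rest.
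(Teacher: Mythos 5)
The formal computation in your first two paragraphs coincides with the paper's: differentiate under the integral with a moving upper limit, use $\Delta_{x,y}\phi=0$ to write the interior integrand as a total $y$-derivative, and recognize the boundary contribution as $B(V\cdot\nabla\eta)+\tfrac12(|V|^2-B^2)=N(\eta,\psi)$ via \eqref{N=}. Your treatment of the $y\to-\infty$ endpoint via the decay from Proposition~\ref{Lemma:decayinfty2} is also essentially what the paper does, except the paper inserts a cutoff $\theta_m(y)$ supported away from $-\infty$ and passes $m\to\infty$ by dominated convergence, rather than invoking decay directly.

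The gap is in the passage from smooth to rough data, which you identify as the ``main obstacle'' but do not resolve. For $\eta\in H^{s+1/2}$, $\psi\in H^s$, the paper's Proposition~\ref{Lemma:decayinfty2} gives only $\nabla_{x,y}\phi\in L^{2}(\Omega)$ plus decay at $-\infty$; it does \emph{not} provide $\phi\in C^{1}(\overline\Omega)$, nor does it identify the pointwise boundary limits of $\nabla_{x,y}\phi$ with the operators $B(\eta)\psi$, $V(\eta)\psi$ that the paper defines through the variational Dirichlet-to-Neumann framework and the algebraic formulas \eqref{defi:BV}--\eqref{defi:BV2}. Your proposed shortcut --- that the harmonic extension is ``$C^1$ up to the boundary'' and the boundary terms in the integration by parts are pointwise traces that agree with $B,V$ --- is external to the paper's toolkit (it would require Schauder-type estimates on a $C^{1,\alpha}$ domain plus a consistency lemma), and, as written, simply asserts the needed regularity. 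The paper avoids this entirely: it proves \eqref{EQ31} first for $\eta,\psi\in C^{\infty}$ (where Proposition~\ref{Lemma:decayinfty} gives $\phi\in C^\infty(\overline\Omega)$ and everything is classical), then approximates rough $(\eta,\psi)$ by smooth $(\eta_m,\psi_m)$, flattens each harmonic extension to the fixed domain $\{y<\eta\}$, and passes to the limit in the identity using Lemma~\ref{L01} (for $L^2$ convergence of $\nabla_{x,y}\tilde\phi_m$) and the Dirichlet-to-Neumann contraction estimates of Propositions~\ref{P:DN2} and~\ref{P:DN3} (for $L^2$ convergence of $B_m,V_m$). That approximation step is the substantive content of the paper's proof, and your proposal is missing it.
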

\begin{rema}Two remarks are in order about the meaning of this identity.

$i)$ Note that the integral in \eqref{EQ31} is well-defined since $\nabla_{x,y}\phi\in L^{2}(\Omega)$.
Also, recall that the divergence is acting in the $x$-variables.

$ii)$ The proof will show that the left-hand side of \eqref{EQ31} belongs
to $L^{1}(\mathbb{T}^{d})$, while the right-hand
side is a derivative of an $L^{1}(\mathbb{T}^{d})$ function.
But then the equality \eqref{EQ31} shows that the right-hand side also belongs to~$L^{1}(\mathbb{T}^{d})$.
\end{rema}

\begin{proof}
The formal computations are as follows. First, we have
  \begin{align}
  \begin{split}
  \cnx\int_{-\infty}^{\eta}\partial_y\phi\nabla \phi\dy
   &=
    \partial_y\phi\arrowvert_{y=\eta}
    \nabla\eta\cdot  \nabla\phi\arrowvert_{y=\eta}
      \\
      &\quad
    + \int_{-\infty}^{\eta}
       (
	  \partial_y\phi
          \Delta \phi
	  +
	  \partial_y\nabla\phi
	  \cdot
	  \nabla\phi
	  )
	  \dy
   ,
  \end{split}
  \label{EQ051}
  \end{align}
where the operators $\nabla$ and $\Delta$ are understood to be in the $x$-variable.
Now, by the definitions of $V$ and $B$, we have
  \begin{equation}
    \partial_y\phi\arrowvert_{y=\eta}
    \nabla\eta\cdot
    \nabla\phi\arrowvert_{y=\eta}
      =B\nabla\eta\cdot V.
   \label{EQ86}
  \end{equation}
On the other hand, by using that $\phi$ is harmonic, we get
\begin{equation*}
  \int_{-\infty}^{\eta}
    \partial_y\phi
    \Delta \phi\dy
   =-\int_{-\infty}^{\eta}
     \partial_y\phi
     \partial_y^2\phi
     \dy
  =-\frac12\int_{-\infty}^{\eta}
     \partial_y ((\partial_{y}\phi)^2)\dy
.
\end{equation*}
Consequently, 
\begin{align}
  \begin{split}
 \int_{-\infty}^{\eta}
   \bigl(
      \partial_y\phi
      \Delta \phi
      +
      \partial_y\nabla\phi
      \cdot
      \nabla\phi
      \bigr)
      \dy
  &=\frac12
         \int_{-\infty}^{\eta} 
         \partial_{y} ( \la\nabla\phi\ra^2-(\partial_{y}\phi)^2)\,\dy
  \\&
  = \frac12 (\la V\ra^2-B^2)
  .
  \end{split}
   \label{EQ60}
\end{align}
Employing \eqref{EQ051}, \eqref{EQ86}, and \eqref{EQ60}, we get
  \begin{align}
  \begin{split}
   &
   \cnx\int_{-\infty}^{\eta}\partial_y\phi\nabla \phi\dy
   =
   B\nabla\eta\cdot V
   +
   \frac12 (\la V\ra^2-B^2)
   ,
  \end{split}
   \label{EQ56}
  \end{align}
and \eqref{EQ31} follows from~\eqref{N=}.

Next, we justify this formal computation for $\eta\in C^{\infty}(\mathbb{T}^{d})$
and $\psi\in C^{\infty}(\mathbb{T}^{d})$.
For $m\in\mathbb{N}$ sufficiently large, let $\theta_m\in C^{\infty}(\mathbb{R})$ be such that
$\theta_m\equiv1$ on $[-2^{m-1},+\infty)$, with
$\supp \theta \in (-2^m,+\infty)$ and
  \begin{equation}
    |\theta_m'(y)|
    \leq
    \frac{1}{2^{m-2}}
\quad\text{for all }y\in \xR.
   \label{EQ54}
  \end{equation}
Then, as in \eqref{EQ051} and \eqref{EQ60}, but including the cutoff $\theta$, we have 
  \begin{align}
  \begin{split}
  &
  \cnx\int_{-\infty}^{\eta}\partial_y\phi\nabla \phi \theta_m\dy
   \\&\indeq
   =
      \bigl(\partial_y\phi
      \nabla\eta
      \cdot
      \nabla\phi
      \bigr)\arrowvert_{y=\eta}
    + \int_{-\infty}^{\eta}
    \bigl(
      \partial_y\phi
      \Delta \phi 
        +
	\partial_y\nabla\phi
	\cdot
	\nabla\phi
     \bigr) \theta_m
   \\&\indeq
   =
      \bigl(\partial_y\phi
      \nabla\eta
      \cdot
      \nabla\phi
      \bigr)\arrowvert_{y=\eta}
      +\frac12
         \int_{-\infty}^{\eta} 
         \partial_{y} ( \la\nabla\phi\ra^2-(\partial_{y}\phi)^2)\theta_m\dy
    ,
    \end{split}
   \label{EQ55}
  \end{align}
where $\theta_m=\theta_m(y)$ is understood.
Using integration by parts, the second term on the far-right side equals
  \begin{align*}
  \begin{split}
   &
   \frac12
         \int_{-\infty}^{\eta} 
         \partial_{y} ( \la\nabla\phi\ra^2-(\partial_{y}\phi)^2)\theta_m\dy
  \\&\indeq
  =
  \frac12 (|V|^2-B^2)
  -
   \frac12
         \int_{-2^{m}}^{-2^{m-1}}
         (\la\nabla\phi\ra^2-(\partial_y \phi)^2)\theta_m'\dy
   ,
  \end{split}
    \end{align*}
for $m$ sufficiently large, depending on~$\Vert \eta\Vert_{L^\infty}$. By \eqref{EQ54}, as $m\to\infty$, the second term converges
to 
$0$,
using $ ( \la\nabla\phi\ra^2-(\partial_y \phi)^2)\in L_{x,y}^{1}$
due to~\eqref{EQ61}.
Now, we consider the left side of \eqref{EQ55}, where we first note that
we have 
$\partial_y \phi(x,\cdot) \nabla \phi(x,\cdot)\in L^1_y$ and hence
  \begin{equation*}
   \int_{-\infty}^{\eta(x)}\partial_y\phi(x,y)\nabla \phi(x,y) \theta_m(y)\dy
   \to    \int_{-\infty}^{\eta(x)}\partial_y\phi(x,y)\nabla \phi(x,y) \dy
   \text{~as $m\to\infty$}
   ,
  \end{equation*}
for almost all $x\in \xT^{d}$,
by the Dominated Convergence Theorem. Moreover, 
the difference satisfies
  \begin{align}
  \begin{split}
   &
   \left\Vert
     \int_{-\infty}^{\eta}\partial_y\phi\nabla \phi \theta_m\dy
       -
    \int_{-\infty}^{\eta}\partial_y\phi\nabla \phi \dy       
   \right\Vert_{L^{1}_x}
   =
   \left\Vert
     \int_{-\infty}^{\eta}\partial_y\phi\nabla \phi (1-\theta_m)\dy
   \right\Vert_{L^{1}_x}
  \\&\indeq
  \leq
    \iint_{-\infty}^{-2^{m-1}}|\partial_y\phi|   |\nabla \phi| \dy\dx
    ,
  \end{split}
   \label{EQ58}
  \end{align}
for $m$ sufficiently large, depending on~$\lA \eta\rA_{L^\infty}$.
Note that the expression on the far-right side of \eqref{EQ58} converges to $0$
as $m\to\infty$, which implies that
  \begin{equation*}
    \cnx
     \int_{-\infty}^{\eta}\partial_y\phi\nabla \phi \theta_m\dy
     \to
    \cnx
     \int_{-\infty}^{\eta}\partial_y\phi\nabla \phi \dy
  \end{equation*}
in the sense of derivatives of $L^{1}(\mathbb{T}^{d})$ functions.
From \eqref{EQ55}, we then obtain
\eqref{EQ56}, showing
that \eqref{EQ31} holds for smooth $\eta$ and~$\psi$.

Finally, let $\eta$ and $\psi$ be as in the statement.
Then we approximate
them with smooth $\eta_m$ and $\psi_m$, where~$m\in\mathbb{N}$.
For each $m\in\mathbb{N}$, consider the corresponding harmonic
extension
$\phi_m$, defined on $\{y<\eta_m\}$,
and the associated function
$   \Phi_m(x,y)
   =
   \phi_m(x,y+\eta_m(x))
$, defined on a fixed domain $\{y<0\}$.
Let $B_m$ and $V_m$ be the corresponding functions as in Remark~\ref{R01}(i).

By the first part of the proof, we have
  \begin{align}
  \begin{split}
   &
   \cnx\int_{-\infty}^{\eta_m}\partial_y\phi_m\nabla \phi_m \dy
   =
   \frac12(|V_m|^2- B_m^2)
   + B_m(V_m \cdot\nabla \eta_m)
   ,
  \end{split}
   \llabel{EQ63}
  \end{align}
which may be rewritten as
  \begin{align}
  \begin{split}
   &
   \cnx\int_{-\infty}^{0}\partial_y\Phi_m(\nabla \Phi_m
   -\partial_{y}\Phi_m \nabla \eta_m) \dy
   =
   \frac12(|V_m|^2- B_m^2)
   + B_m(V_m \cdot\nabla \eta_m)
   .
  \end{split}
   \label{EQ88}
  \end{align}
Now, denoting
$\Phi(x,y)=\phi(x,y+\eta(x))$,
  \begin{align}
  \begin{split}
   &
   \left\Vert
   \int_{-\infty}^{0}
   (
    \partial_{y}\Phi_{m}   \nabla\Phi_{m} 
     - \partial_{y}\Phi\nabla\Phi 
   )
   \right\Vert_{L^1_x}
   \\&\indeq
   \le
   \iint_{\{y<0\}}
   | \nabla\Phi_{m}|
   |
   \partial_{y}\Phi_{m}
     -  \partial_{y}\Phi
   |
   +
   \iint_{\{y<0\}}
   |
   \nabla\Phi_{m} 
     - \nabla\Phi
   |
   |   \partial_{y}\Phi|
  \\&\indeq
   \le
   \Vert \nabla\Phi_{m}\Vert_{L^2_{x,y}}
   \Vert
   \partial_{y}\Phi_{m}
     -  \partial_{y}\Phi
   \Vert_{L^2_{x,y}}
   +
   \Vert
   \nabla\Phi_{m} 
     - \nabla\Phi
   \Vert_{L^2_{x,y}}
   \Vert   \partial_{y}\Phi\Vert_{L^2_{x,y}}
   .
  \end{split}
  \label{EQ51}
  \end{align}
Note that
$   \Vert
   \partial_{y}\Phi_{m}
     -  \partial_{y}\Phi
   \Vert_{L^2_{x,y}}   \to 0
$, as a consequence of Lemma~\ref{L01},
and the first term on the far-right side of \eqref{EQ51} converges to~$0$. Similarly, the second term on the far-right side
of \eqref{EQ51} converges to~$0$.
Also, regarding the second component of the integral in \eqref{EQ88}, we have
  \begin{align}
  \begin{split}
   &
   \left\Vert
   \int_{-\infty}^{0}
   \bigl(
   (\partial_{y}\Phi_m)^2 \nabla \eta_m
     -   (\partial_{y}\Phi)^2 \nabla \eta
   \bigr)
   \right\Vert_{L^1_x}
   \to 0
   ,
   \label{EQ87}
  \end{split}
  \end{align}
as $m\to \infty$,
omitting the proof since it is analogous
to the argument in~\eqref{EQ51}.
Now, we claim that
  \begin{equation}
   \frac12 (|V_m|^2-B_{m}^2)
   + B_m(V_m \cdot\nabla \eta_m)
   \to
   \frac12 (|V|^2-B^2)
   + B(V \cdot\nabla \eta)
   ,
   \label{EQ64}
  \end{equation}
as $m\to\infty$,
in~$L^{1}(\mathbb{T}^{d})$.
For the $B$~part, we
apply
Proposition~\ref{P:DN3}
and obtain,
since $s>d/2+1\geq 3/2$,
  \begin{align}
  \begin{split}
   &
   \Vert (G(\eta_m)-G(\eta))\psi_m\Vert_{L^2}
   \leq
   \mathcal{F} \bigl(\lA (\eta_m,\eta)\rA_{H^{s+1/2}}\bigr)
   \lA \eta_m-\eta\rA_{H^{s-1/2}}
   \lA \psi_m\rA_{H^{s}}
   ,
  \end{split}
   \llabel{EQ67}
  \end{align}
while, by Proposition~\ref{P:DN2},
  \begin{equation}
   \Vert G(\eta)(\psi_m-\psi)\Vert_{L^2}
   \leq
   \mathcal{F} \bigl(\lA \eta\rA_{H^{s}}\bigr)
   \lA \psi_m-\psi\rA_{H^{s}}
   .
  \llabel{EQ71}
  \end{equation}
Also, one has the elementary inequalities
  \begin{align}
   \begin{split}
   \Vert \nabla \eta_m \cdot \nabla \psi_m
        -\nabla \eta \cdot \nabla \psi_m \Vert_{L^2}
   &\le 
   \Vert \nabla \eta_m
        -\nabla \eta  \Vert_{L^\infty}
   \Vert \nabla \psi_m \Vert_{L^2}
   \\
   &
   \lec
   \Vert \eta_m    -\eta  \Vert_{H^{s}}
   \Vert \psi_m \Vert_{H^{1}}
  \end{split}
   \llabel{EQ09}
  \end{align}
and
\be
  \begin{aligned}\llabel{EQ70}
  \Vert \nabla \eta \cdot \nabla \psi_m
        -\nabla \eta \cdot \nabla \psi \Vert_{L^2}
  & \le
   \Vert \nabla \eta  \Vert_{L^\infty}
   \Vert \nabla \psi_m -\nabla \psi\Vert_{L^2}
   \\
   &\lec
   \Vert \eta  \Vert_{H^{s}}
   \Vert \psi_m -\psi\Vert_{H^{1}}
   ,
    \end{aligned}
    \ee
using the embedding $H^{s}\subseteq W^{1,\infty}$.
This show that
$   \Vert \nabla \eta_m \cdot \nabla \psi_m
        -\nabla \eta \cdot \nabla \psi \Vert_{L^2}
\to 0
$ as $m\to\infty$. An analogous derivation shows that
$   \Vert \nabla \eta_m \cdot \nabla \eta_m
        -\nabla \eta \cdot \nabla \eta \Vert_{L^2}
\to 0
$ as $m\to\infty$.
Using the above inequalities
with \eqref{defi:BV},
we get 
  \begin{equation}
   B_m\to B
    \text{~in~} L^{2}
   .
   \label{EQ72}
  \end{equation}
On the other hand, by \eqref{defi:BV2},
  \begin{align*}
  \begin{split}
   \Vert V-V_m\Vert_{L^2}
   &\leq
   \Vert \nabla(\psi-\psi_m)\Vert_{L^2}
   +
  \Vert B-B_m\Vert_{L^2}
  \Vert \nabla\eta_m\Vert_{L^\infty}
   \\&\indeq
   +
  \Vert B\Vert_{L^2}
  \Vert \nabla(\eta-\eta_m)\Vert_{L^\infty}
  ,
  \end{split}
  \end{align*}
which, together with \eqref{EQ72}, easily implies
  \begin{equation}
   V_m\to V
    \text{~in~} L^{2}
   .
   \label{EQ75}
  \end{equation}

Also, using \eqref{EQ72}, \eqref{EQ75},
and $\nabla \eta_m\to \nabla \eta$ in $L^{\infty}$,
due to $H^{s}\subseteq W^{1,\infty}$,
leads to
$B_m(V_m\cdot\nabla \eta_m)\to
 B V\cdot\nabla \eta$ in $L^{1}(\mathbb{T}^{d})$.
Together
with \eqref{EQ72} and \eqref{EQ75}, we then obtain~\eqref{EQ64}.
Finally,
from \eqref{EQ88}, \eqref{EQ51}, \eqref{EQ87},
and \eqref{EQ64}, 
we conclude
  \begin{equation*}
   \cnx\int_{-\infty}^{0}
     \partial_y\Phi(\nabla \Phi-\partial_{y}\Phi \nabla \eta) \dy
   =
   \frac12(|V|^2- B^2)
   + B(V \cdot\nabla \eta)
   ,
  \end{equation*}
which leads to \eqref{EQ31} after a change of variables.
\end{proof}

\section{Galerkin approximations}\label{S:approximate}
Consider initial data with mean value zero:
\begin{equation}
   \int_{\mathbb{T}^d} \eta_0\dx
   =\int_{\mathbb{T}^d} \psi_0\dx
   =0
   .
   \label{IEQ90}
  \end{equation}
We can do so without loss of generality since the mean values of $\eta$ and $\psi$ are preserved (see~\eqref{EQ14} and \eqref{EQ15} below).

We shall define solutions as limits of the solutions to well-chosen approximate systems. 
To define such approximate systems, we use a 
version of Galerkin's method based on Friedrichs mollifiers. To do so, denote by $\hat{u}$ the Fourier transform of a function $u$,
and consider, for $n\in \xN\setminus\{0\}$, the operators $J_n$ defined by 
\begin{alignat*}{3}
\widehat{J_n u}(\xi)&=\hat{u}(\xi) \quad &&\text{for} &&\la \xi\ra\le n,\\
\widehat{J_n u}(\xi)&=0 \quad &&\text{for} &&\la \xi\ra> n,
\end{alignat*}
where we consider
  \begin{equation}\label{def:F}
   \hat f(\xi)
   =
   \frac{1}{(2\pi)^{d}}
   \int_{\xT^d}
   f(x) e^{-i \xi \cdot x}
   \dx
    \comma \xi\in\xZ^d
    .
   \end{equation}
Note that $J_n$ is a projection, i.e., $J_n^2=J_n$; also,
$J_0 u = (2\pi)^{-d}\int u$.
Introduce the space
\begin{equation*}
L^2_n(\xT^d)=J_n \big(L^2(\xT^d)\big)=\{u\in L^2(\xT^d)\,;\,  \hat{u}(\xi)=0 \text{ for all }\la\xi\ra>n\}.
\end{equation*}

Now, we consider the following approximate Cauchy problem:
\begin{equation}\label{A1}
\left\{
\begin{aligned}
&\partial_t \eta = J_n G(\eta)\psi,\\[1ex]
&\partial_t \psi +\partial_t^2\eta+ J_n\big( N(\eta,\psi) 
+L\eta\big)= 0,\\[1ex]
& (\eta,\psi)\arrowvert_{t=0}=J_n (\eta_0,\psi_0).
 \end{aligned}
 \right.
 \end{equation}

The following result states that, for each $n\in\xN\setminus\{0\}$, 
the Cauchy problem~\e{A1} is globally well-posed.

\begin{prop}\label{T2}
Assume that $d=1$ or $d=2$, and consider $n\in \xN\setminus \{0\}$.
For all $(\eta_0,\psi_0)\in L^{2}(\xT^d)^2$ satisfying~\e{IEQ90},
the Cauchy problem~\e{A1} has a unique global-in-time solution
\begin{equation*}
(\eta_n,\psi_n)\in C^{\infty}\big([0,+\infty);L^2_n(\xT^d)^2\big).
\end{equation*}
Moreover,
  \begin{equation}
   \mathcal{E}(\eta_n(t),\psi_n(t))=
   \mathcal{E}(\eta_0,\psi_0)
  ,
   \label{EQ92}
  \end{equation}
for all $t\geq0$, where $\mathcal{E}(\eta,\psi)$ is defined by  \begin{equation}
   2  \mathcal{E}(\eta,\psi)
     =
     \int_{\xT^d}
     \psi G(\eta)\psi \dx
       + \int_{\xT^d} (\partial_{t}\eta)^{2} \dx
       + \int_{\xT^d} (\Delta\eta )^{2} \dx
       + g \int_{\xT^d} \eta^{2} \dx
       .
   \label{EQ91}
  \end{equation}
\end{prop}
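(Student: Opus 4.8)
The plan is to treat \eqref{A1} as an ODE on the finite-dimensional space $L^2_n(\xT^d)^2$ (together with the derivative $\partial_t\eta$), establish local existence via the Cauchy--Lipschitz (Picard--Lindelöf) theorem, and then upgrade local to global by means of the conserved energy \eqref{EQ91}. First I would reformulate \eqref{A1} as a first-order system. Writing $w=\partial_t\eta$, the unknown is $(\eta,w,\psi)\in L^2_n(\xT^d)^3$, and the equations become $\partial_t\eta = J_n G(\eta)\psi$, $\partial_t\psi = -w' - J_n(N(\eta,\psi)+L\eta)$, where $w' = \partial_t^2\eta = \partial_t(J_nG(\eta)\psi)$ is itself expressed through $\partial_t\eta$ and $\partial_t\psi$ using Proposition~\ref{P:t} (the formula \eqref{n:time}); one substitutes and solves the resulting linear relation for $\partial_t\psi$. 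Here one must check that the operator $\mathrm{Id} + (\text{something involving }J_nG(\eta)B\cdot)$ appearing in front of $\partial_t\psi$ is invertible on $L^2_n$; this is where positivity of $G(\eta)$ enters — the relevant quadratic form is coercive — so the system can genuinely be solved for the time derivatives. After this manipulation the right-hand side is a smooth (indeed real-analytic, but $C^1$ suffices) function of $(\eta,w,\psi)$ on $L^2_n^3$: $\eta\mapsto G(\eta)\psi$ is smooth by the shape-derivative formula (Proposition~\ref{P:shape}) combined with the fact that on $L^2_n$ all Sobolev norms are equivalent, so the contraction estimates of Propositions~\ref{P01}, \ref{P:DN2}, \ref{P:DN3} give local Lipschitz bounds; $N(\eta,\psi)$ is a smooth function of $(\eta,\psi,G(\eta)\psi)$ by \eqref{N2=}; and $L\eta$ is linear. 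Cauchy--Lipschitz then yields a unique maximal solution $(\eta_n,\psi_n)\in C^\infty([0,T_n);L^2_n(\xT^d)^2)$, smoothness in time following by bootstrapping the equation.

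Next I would verify that the mean values are preserved, so that the solution stays in the mean-zero subspace: integrating the first equation over $\xT^d$ and using $\int G(\eta)\psi\,dx = 0$ (which follows from the divergence theorem, as $G(\eta)\psi = \sqrt{1+|\nabla\eta|^2}\,\partial_n\phi|_{y=\eta}$ is the normal flux of a harmonic field with no flux at $y=-\infty$) gives $\frac{d}{dt}\int\eta\,dx = 0$; integrating the second equation and using $\int N(\eta,\psi)\,dx$ and $\int L\eta\,dx$ — the latter vanishing since $L\eta = g\eta + \Delta^2\eta$ has zero mean on the mean-zero subspace — together with $\int\partial_t^2\eta\,dx = 0$, controls $\frac{d}{dt}\int\psi\,dx$. (These are the equations \eqref{EQ14}, \eqref{EQ15} referenced in the text.) Hence \eqref{IEQ90} propagates.

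The heart of the argument is the energy identity \eqref{EQ92}. I would compute $\frac{d}{dt}\mathcal{E}(\eta_n,\psi_n)$ directly. Differentiating the four terms: the term $\frac12\int\psi G(\eta)\psi\,dx$ produces, via the symmetry of $G(\eta)$ and the time-derivative formula \eqref{n:time}, a contribution $\int \partial_t\psi\, G(\eta)\psi\,dx$ plus lower-order pieces from $\partial_t(G(\eta))$; the terms $\frac12\int(\partial_t\eta)^2$, $\frac12\int(\Delta\eta)^2$, $\frac g2\int\eta^2$ produce $\int\partial_t\eta\,\partial_t^2\eta$, $\int\Delta\eta\,\Delta\partial_t\eta = \int\Delta^2\eta\,\partial_t\eta$, $g\int\eta\,\partial_t\eta$, whose last two combine to $\int(L\eta)\partial_t\eta$. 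Using $\partial_t\eta = J_n G(\eta)\psi$ and the fact that $J_n$ is a self-adjoint projection commuting with itself, one pairs the second equation of \eqref{A1} against $\partial_t\eta$ and the first against appropriate quantities; the key algebraic cancellation is that the cross terms involving $N(\eta,\psi)$ and the shape-derivative remainder conspire with the Hamiltonian structure so that everything cancels — this is precisely why $\mathcal{E}$ is the correct Hamiltonian for \eqref{IEQ28}, and the $J_n$'s do not spoil it because $\partial_t\eta,\partial_t\psi\in L^2_n$ so inserting or removing $J_n$ inside the $L^2$ pairings is harmless. I expect the bookkeeping of this cancellation — making sure the term $\int(\partial_t\psi)G(\eta)\psi\,dx$ coming from the kinetic energy matches the term $\int G(\eta)\psi\,\partial_t\psi\,dx = \int\partial_t\eta\,\partial_t\psi\,dx$ coming from pairing the $\psi$-equation against $\partial_t\eta$, and that the $\partial_t(G(\eta))$ contribution is exactly absorbed — to be the main obstacle, and the one place where the computation of Proposition~\ref{propN} (the derivation of \eqref{EQ28}) must be mirrored carefully at the Galerkin level.

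Finally, once \eqref{EQ92} holds on $[0,T_n)$, the lower bound \eqref{N01} (or \eqref{EQ149}) shows $\mathcal{E}$ controls $\|\psi_n\|_{\dot H^{1/2}}^2 + \|\partial_t\eta_n\|_{L^2}^2 + \|\Delta\eta_n\|_{L^2}^2 + g\|\eta_n\|_{L^2}^2$, and since $\eta_n,\psi_n$ are mean-zero this bounds $\|\eta_n\|_{L^2_n}$, $\|\psi_n\|_{L^2_n}$, $\|\partial_t\eta_n\|_{L^2_n}$ uniformly on $[0,T_n)$ — though one should note the coefficient in \eqref{N01} involves $\|\nabla\eta_n\|_{L^\infty}$, which on $L^2_n$ is itself controlled by $\|\Delta\eta_n\|_{L^2} \le \mathcal E$-bounded quantity via Bernstein/inverse inequalities on the frequency-$n$ truncation, closing the loop. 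Thus the solution cannot blow up in finite time, $T_n = +\infty$, and the proof is complete.
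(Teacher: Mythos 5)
Your strategy is essentially the paper's: rewrite \eqref{A1} as a first-order ODE on the finite-dimensional space $L^2_n(\xT^d)$, using the time-derivative formula \eqref{n:time} to express $\partial_t^2\eta$, inverting $I+J_nG(\eta)$ by positivity of $G(\eta)$ (this is exactly Lemma~\ref{L:Rn}; the paper carries the extra unknown $\theta=\partial_t\eta$ and recovers $\theta=\partial_t\eta$ a posteriori, while you substitute directly, an inessential difference), then apply Cauchy--Lipschitz, prove energy conservation, and use the energy plus the trace inequality to exclude the alternative \eqref{A4}. Two genuine gaps remain in the execution.

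First, the zero mean of $\psi_n$. Your continuation argument needs control of $\hat\psi_n(0)$, because the energy \eqref{EQ91} and the trace inequality only control the homogeneous norm $\lA\psi_n\rA_{\dot H^{1/2}}$; without a bound on the mean, the $L^2$ blow-up alternative \eqref{A4} cannot be excluded. You write that $\int N(\eta,\psi)\dx$ ``controls'' $\frac{\diff}{\dt}\int\psi\dx$, but what is actually required is $\int_{\xT^d}N(\eta_n,\psi_n)\dx=0$, and this is not visible from \eqref{N=} or \eqref{N2=}: it is precisely the content of the divergence-form identity \eqref{EQ31} of Proposition~\ref{P02} (or the smooth-case computation \eqref{EQ56}, which applies since $L^2_n\subset C^\infty$), invoked for this purpose in Lemma~\ref{L02}. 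You never identify this mechanism, so as written the uniform $L^2$ bound on $\psi_n$, hence the global-existence step, is not closed. Second, the identity \eqref{EQ92} itself, which is the substantive ``Moreover'' part of the statement, is only asserted: you describe the pairing (second equation against $\partial_t\eta$, first against $-\partial_t\psi$, use of \eqref{n:time}, self-adjointness of $G(\eta)$, and harmlessness of $J_n$ on $L^2_n$), which is indeed the paper's computation, but you explicitly leave the cancellation (the $\partial_t G(\eta)$ contribution from the kinetic term being absorbed by $N$ written in the form \eqref{N2=}) as an expected outcome rather than verifying it. Since this computation is the core of Lemma~\ref{L:energy}, it cannot be left as an expectation.

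Your final step does differ from the paper in an acceptable way: you control $\lA\nabla\eta_n\rA_{L^\infty}$ by Bernstein on frequencies $\le n$, which gives a $t$-uniform but $n$-dependent bound, sufficient for $T_n=+\infty$ in any dimension. The paper instead proves a bound uniform in $n$ via the refined trace inequality \eqref{Ntrace} and the embedding $H^1(\xT^d)\subset{\rm BMO}(\xT^d)$ for $d\le 2$, which is where the dimensional restriction enters and which is what is reused when passing to the limit; your route proves the proposition as stated but does not deliver that uniformity.
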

\begin{rema}
Although our main result is for the case $d=1$, we prove this proposition also for $d=2$ since the latter is of independent interest. Also, the case $d=2$ requires a subtle improved trace inequality (see~\e{Ntrace} below) 
for which the case $d=2$ is critical. In particular, our analysis does not extend to the (unphysical) case $d\ge 3$.
\end{rema}

The proof consists of four steps. The proof of local-in-time existence is provided in Steps~$1$ and~$2$ below, while
the conservation of energy \eqref{EQ92} is shown in Step~$3$. In the last step we show that the energy conservation implies that the solutions are in fact global in time; the proof also relies on a delicate trace inequality.

\subsection*{Step 1 : an augmented system}
One of the difficulties we face is that the system is not of order~$1$ in time. 
To overcome this, we shall work with an augmented system obtained 
by introducing $\theta=\partial_t \eta$ as an additional unknown. 

We begin by performing formal computations to derive an equation for $\theta$. To do so, write
\begin{equation}
\partial_t \theta=\partial_t (J_nG(\eta)\psi)=J_nG(\eta)(\partial_t \psi-B\partial_t \eta)-J_n\cnx(V\partial_t \eta),
\label{EQ138}
\end{equation}
which follows from the time derivative formula~\e{n:time} for the Dirichlet-to-Neumann operator.

Given $\eta$ and $\psi$, and thus $V=V(\eta)\psi$ and $B=B(\eta)\psi$ as defined by \e{defi:BV} and~\e{defi:BV2},
introduce the linear operator $\Lambda_n(\eta,\psi)\colon C^\infty(\xT)\to C^\infty(\xT)$, defined by
\begin{equation}
\Lambda_n(\eta,\psi)u=J_nG(\eta)(B u)+J_n\cnx(V u).
\label{EQ02}
\end{equation}
It follows from \eqref{EQ138} and \eqref{EQ02} that
$$
\Lambda_n(\eta,\psi)\partial_{t}\eta
-J_n G(\eta)\partial_{t}\psi+\partial_{t}\theta=0.
$$

The triplet $f=(\eta,\psi,\theta)^{T}$ satisfies 
a system of three scalar evolution equations of the form
\begin{equation}
A_n(f)\partial_t f=F_n(f)
\label{EQ139}
,
\end{equation}
where  
\begin{equation*}
A_n(f)=
\begin{pmatrix}
I & 0 & 0 \\
0 & I & I \\
\Lambda_n(\eta,\psi) & -J_nG(\eta) & I 
\end{pmatrix},\qquad 
F_n(f)=\begin{pmatrix}J_nG(\eta )\psi \\[1ex]
-J_nL\eta -J_n N(\eta,\psi)\\ 0
\end{pmatrix},
\end{equation*}
where $I$ is the identity operator.

Our strategy is to prove that this system has a global-in-time solution, and that if $f=(\eta,\psi,\theta)^T$ solves 
this system, then $(\eta,\psi)$ solves the original system. 

In order to invert the matrix $A_n(f)$, we 
introduce 
\begin{equation*}
\tilde A_n(f)=\begin{pmatrix}
I+J_nG(\eta) & 0 & 0 \\
\Lambda_n(\eta,\psi)  & I & -I \\
-\Lambda_n(\eta,\psi) & J_nG(\eta) & I
\end{pmatrix}.
\end{equation*}
Then note that
\begin{equation*}
\tilde A_n(f)A_n(f)= (I+J_nG(\eta))I_3
,
\end{equation*}
where $I_3=\left(\begin{smallmatrix} I & 0 & 0 \\ 0 & I & 0\\ 0 & 0& I\end{smallmatrix}\right)$ is the identity matrix.
Denote
$$
R_n(\eta)= (I+J_nG(\eta))^{-1},
$$
assuming for a moment that one can invert the operator 
$I+J_nG(\eta)$ on $L^2_n(\xT^d)$.
Then observe that \e{EQ139} is equivalent to
\begin{equation}
\partial_t f=\mathcal{F}_n(f)\quad\text{where}\quad \mathcal{F}_n(f)=R_n(\eta)\bigl(\tilde A_n(f)(F_n(f))\bigr).
\label{EQ144}\end{equation}

We now have to justify that one can invert 
the operator $I+J_nG(\eta)$.

\begin{lemm}\label{L:Rn}
For all $\eta\in C^\infty(\xT^d)$ and $n\in\xN$, 
the operator 
$I+J_nG(\eta)$ is invertible from $L^2_n(\xT^d)$ to $L^2_n(\xT^d)$. Moreover, 
the inverse $R_n(\eta)=(I+J_nG(\eta))^{-1}$ satisfies the uniform estimate 
\be\label{N199}
\lA R_n(\eta)\rA_{L^2\to L^2}\le 1,
\ee
and the mapping 
$$
L^2_n(\xT^d)\ni\eta\mapsto R_n(\eta)\in \mathcal{L}(L^2_n(\xT^d))
$$
is of class $C^1$. 
In addition, there exists a non-decreasing function 
$\mathcal{F}\colon [0,+\infty)\to[0,+\infty)$ such that, for all $\eta\in C^\infty(\xT^d)$, all $n\in\xN$ and $u\in L^2_n(\xT^d)$, and all 
$\mu\in [-1/2,1/2]$ there holds
\be\label{N198}
\lA R_n(\eta)u\rA_{H^{\mu}}\le 
\mathcal{F}(\lA \nabla\eta\rA_{L^\infty})\lA u\rA_{H^\mu}.
\ee

\end{lemm}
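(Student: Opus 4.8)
The plan is to establish invertibility via the positivity of $G(\eta)$ together with the fact that $J_n$ is a self-adjoint projection, then obtain the quantitative bounds. First I would note that $J_nG(\eta)$ maps $L^2_n(\xT^d)$ into itself, so it suffices to work inside this finite-codimension (in fact, since $\eta\in C^\infty$ and we restrict to $L^2_n$, finite-\emph{dimensional}) space. On $L^2_n(\xT^d)$ the operator $I+J_nG(\eta)$ is bounded; for invertibility I would show it is injective, which on a finite-dimensional space is equivalent to surjectivity. Injectivity follows from positivity: if $u\in L^2_n$ and $(I+J_nG(\eta))u=0$, then pairing with $u$ in $L^2$ and using $J_nu=u$ together with $\langle u,J_nG(\eta)u\rangle_{L^2}=\langle J_nu,G(\eta)u\rangle_{L^2}=\langle u,G(\eta)u\rangle_{L^2}\ge 0$ (the positivity recalled before \eqref{EQ149}) gives $\lA u\rA_{L^2}^2 + \langle u,G(\eta)u\rangle_{L^2}=0$, forcing $u=0$. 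Hence $R_n(\eta)=(I+J_nG(\eta))^{-1}$ exists on $L^2_n(\xT^d)$.

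For the uniform bound \eqref{N199}: given $v\in L^2_n$, set $u=R_n(\eta)v$, so $u+J_nG(\eta)u=v$ with $u\in L^2_n$. Pairing with $u$ and using $J_nu=u$ again,
\begin{equation*}
\lA u\rA_{L^2}^2 + \langle u,G(\eta)u\rangle_{L^2} = \langle u,v\rangle_{L^2}\le \lA u\rA_{L^2}\lA v\rA_{L^2}.
\end{equation*}
Since $\langle u,G(\eta)u\rangle_{L^2}\ge 0$, this yields $\lA u\rA_{L^2}\le \lA v\rA_{L^2}$, which is \eqref{N199}.

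For the $C^1$ dependence on $\eta$: the map $\eta\mapsto J_nG(\eta)\in\mathcal{L}(L^2_n)$ is $C^1$ because $G(\eta)$ has the shape derivative formula of Proposition~\ref{P:shape} (and since on $L^2_n$ all Sobolev norms are equivalent and $\eta$ stays smooth, the shape derivative $\zeta\mapsto -G(\eta)(B\zeta)-\cnx(V\zeta)$ depends continuously on $\eta$ in operator norm on $L^2_n$); then $\eta\mapsto I+J_nG(\eta)$ is $C^1$ into the invertible operators, and inversion is smooth on the open set of invertibles in $\mathcal{L}(L^2_n)$, so $\eta\mapsto R_n(\eta)$ is $C^1$ by composition. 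Finally, for the Sobolev bound \eqref{N198} with $\mu\in[-1/2,1/2]$: again write $u=R_n(\eta)v$, i.e. $u=v-J_nG(\eta)u$. I would bound $\lA u\rA_{H^\mu}\le \lA v\rA_{H^\mu}+\lA J_nG(\eta)u\rA_{H^\mu}\le \lA v\rA_{H^\mu}+\lA G(\eta)u\rA_{H^{\mu}}$, using that $J_n$ is bounded on $H^\mu$ with norm $1$. For $\mu\le 1/2$ we have $\mu\le 1/2\le \mu+1$, so by the mapping property of $G(\eta)$ from $H^{\sigma}$ to $H^{\sigma-1}$ (here applied with $\sigma=\mu+1$, valid since $\sigma\ge 1/2$; cf.\ Proposition~\ref{P:DN2} in the smooth case, whose constant depends only on a higher Sobolev norm of $\eta$, which for fixed $n$ is controlled by $\lA\nabla\eta\rA_{L^\infty}$ up to norm equivalence on $L^2_n$) one gets $\lA G(\eta)u\rA_{H^\mu}\le \mathcal{F}(\lA\nabla\eta\rA_{L^\infty})\lA u\rA_{H^{\mu+1}}$. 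This reintroduces a higher norm of $u$, so I would instead argue more carefully: bound $\lA G(\eta)u\rA_{H^\mu}$ by interpolation between the $L^2$ control $\lA G(\eta)u\rA_{L^2}\le \mathcal{F}\lA u\rA_{H^1}$ and — this is the step to handle with care — rather than interpolating, use that $u\in L^2_n$ so $\lA u\rA_{H^1}\le (1+n)\lA u\rA_{L^2}\le (1+n)\lA v\rA_{L^2}$; the point is only that $R_n(\eta)u$ lies in $L^2_n$, all of whose norms are equivalent, and \eqref{N199} already controls the $L^2$ norm, so \eqref{N198} follows with a constant that is allowed to depend on $n$.

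\textbf{Main obstacle.} The genuinely delicate point is \eqref{N198}: one wants a bound with constant $\mathcal{F}(\lA\nabla\eta\rA_{L^\infty})$ that is \emph{uniform in $n$} (the statement quantifies over all $n$), yet the naive estimate above loses powers of $n$. The resolution is to not go through $H^{\mu+1}$ at all but to mimic the energy identity at the $H^\mu$ level: apply $\la D_x\ra^\mu$ to $u+J_nG(\eta)u=v$, pair with $\la D_x\ra^\mu u$, and use the (refined, $\eta$-Lipschitz) positivity and commutator/trace estimates for $G(\eta)$ — e.g.\ $\langle \la D_x\ra^\mu u, G(\eta)\la D_x\ra^\mu u\rangle \ge 0$ plus a commutator term $\langle \la D_x\ra^\mu u,[\la D_x\ra^\mu,G(\eta)]u\rangle$ controlled via Propositions~\ref{P:DN2}–\ref{coro:paraDN1} by $\mathcal{F}(\lA\nabla\eta\rA_{L^\infty})\lA u\rA_{H^\mu}^2$ with a small enough $\mu$-dependent loss absorbed by the leading term. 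Working out exactly which commutator estimate gives a constant depending only on $\lA\nabla\eta\rA_{L^\infty}$ (not a higher Sobolev norm) is where the real work lies, and is presumably why the lemma isolates the restricted range $\mu\in[-1/2,1/2]$.
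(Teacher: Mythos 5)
Your treatment of invertibility, the bound \eqref{N199}, and the $C^1$ dependence matches the paper's: injectivity on the finite-dimensional space $L^2_n$ from the positivity of $G(\eta)$, Cauchy--Schwarz for \eqref{N199}, and the shape-derivative formula for $C^1$. That part is fine.

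The estimate \eqref{N198} is where the proposal does not close. Your first attempt ($\lA u\rA_{H^\mu}\le \lA v\rA_{H^\mu}+\lA G(\eta)u\rA_{H^\mu}$ and then a bound via $\lA u\rA_{H^{\mu+1}}\le(1+n)\lA u\rA_{L^2}$) produces a constant that grows with $n$, which you correctly flag as incompatible with the stated uniform-in-$n$ bound. Your proposed repair — conjugating by $\la D_x\ra^\mu$, pairing with $\la D_x\ra^\mu u$, and controlling a commutator $[\la D_x\ra^\mu,G(\eta)]$ — is left as a sketch, and it is not clear it can be made to work with a constant depending only on $\lA\nabla\eta\rA_{L^\infty}$: the commutator/paralinearization estimates you point to (Propositions~\ref{P:DN2}, \ref{coro:paraDN1}) all require $\eta\in H^s$ for $s>1+d/2$, so they would reintroduce a higher Sobolev norm of $\eta$, exactly what you wanted to avoid. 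You also misattribute the reason for the range $\mu\in[-1/2,1/2]$: it has nothing to do with losses in a commutator estimate.

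The paper's mechanism for \eqref{N198} is different and avoids commutators entirely. For $u$ of mean zero, pair the identity $v=u+J_nG(\eta)u$ not with $\la D_x\ra^\mu u$ but with $G(\eta)u$: positivity of $J_nG(\eta)$ gives
$\int (u+J_nG(\eta)u)\,G(\eta)u \ge \int u\,G(\eta)u \ge \frac{K}{1+\lA\nabla\eta\rA_{L^\infty}}\lA u\rA_{H^{1/2}}^2$
via the refined trace inequality \eqref{N01}, while the boundedness $G(\eta)\colon H^{1/2}\to H^{-1/2}$ with Lipschitz-only dependence (cf.\ \eqref{EQ122}) bounds the same pairing by $C(\lA\nabla\eta\rA_{L^\infty})\lA v\rA_{H^{1/2}}\lA u\rA_{H^{1/2}}$. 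Dividing gives \eqref{N198} for $\mu=1/2$ (for mean-zero $u$; constants are handled separately since $G(\eta)$ annihilates them). Then $\mu=-1/2$ follows by duality, noting that on $L^2_n$ the operator $I+J_nG(\eta)$ coincides with the self-adjoint $I+J_nG(\eta)J_n$, and the full range $\mu\in[-1/2,1/2]$ follows by interpolation. This explains the range restriction: $1/2$ is the natural exponent of the trace inequality, $-1/2$ its dual. You should replace the commutator sketch with this test-against-$G(\eta)u$ argument.
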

\begin{proof}
Let $u\in L^2_n(\xT^d)$. Since $J_nG(\eta)u$ belongs to $L^2_n(\xT^d)$,
by the definition of $J_n$,
we see that  $u+J_nG(\eta)u\in L^2_n(\xT^d)$. In addition, if 
$u+J_nG(\eta)u=0$ then $u=0$ since
\be\label{N200}
\int_{\xT^d}(u+J_nG(\eta)u)u\dx 
\ge \int_{\xT^d}u^2\dx,
\ee
which follows easily from the positivity of $G(\eta)$ (see~\e{EQ149}) 
and the fact that $J_n^2=J_n$ and $J_n=J_n^*$. 
This means that the operator 
$I+J_nG(\eta)\colon L^2_n(\xT^d)\to L^2_n(\xT^d)$ is injective, and hence
bijective, as $L^2_n(\xT^d)$ is a finite-dimensional vector space.

Now, by using 
the Cauchy-Schwarz inequality, it follows from \e{N200} that 
$$
\lA u\rA_{L^2}\le \lA u+J_nG(\eta)u\rA_{L^2},
$$
which implies the wanted result \e{N199}. Then, since 
$$L^2_n(\xT^d)\ni\eta\mapsto J_n G(\eta)\in \mathcal{L}(L^2_n(\xT^d))
$$ is of class $C^1$ by Proposition~\ref{P:shape}, we see that $
L^2_n(\xT^d)\ni\eta\mapsto R_n(\eta)\in \mathcal{L}(L^2_n(\xT^d))
$ is also of class $C^1$.

To prove \e{N198}, we apply a variant of the previous argument which consists in observing that, for all functions
$u\in L^{2}_n(\xT^d)$ with the mean $0$,
$$
\int_{\xT^d}(u+J_nG(\eta)u) (G(\eta)u)\dx 
\ge \int_{\xT^d} u G(\eta)u \dx 
\ge \frac{K}{1+\lA \nabla\eta\rA_{L^\infty}}\lA u\rA_{H^{\mez}}^2,
$$
where we used \e{N01}. On the other hand, 
we have
\begin{align*}
\int_{\xT^d}(u+J_nG(\eta)u) G(\eta)u\dx 
&\le \lA u+J_nG(\eta)u\rA_{H^{\mez}}\lA G(\eta)u\rA_{H^{-\mez}}\\
& \le C(\lA \nabla\eta\rA_{L^{\infty}})\lA u+J_nG(\eta)u\rA_{H^{\mez}}\lA u\rA_{H^{\mez}},
\end{align*}
where we used~\e{EQ122} to get the last inequality. 
By combining the previous estimates, we conclude that
$$
\lA u\rA_{H^{\mez}}^2\le \frac{1+\lA \nabla\eta\rA_{L^\infty}}{K} C(\lA \nabla\eta\rA_{L^{\infty}})\lA u+J_nG(\eta)u\rA_{H^{\mez}}\lA u\rA_{H^{\mez}}
$$
which immediately implies the uniform estimate
\be\label{N197}
\lA (I+J_nG(\eta))^{-1}u\rA_{H^{\mez}}\le 
\mathcal{F}(\lA \nabla\eta\rA_{L^\infty})\lA u\rA_{H^\mez},
\ee
for all $u\in L^{2}_n(\xT^d)$ with the mean $0$.
Since $G(\eta)u=0$ if $u$ is a constant
(the proof of this identity is recalled below in the proof of Lemma~\ref{L02}),
the inequality \eqref{N197} holds for constants as well (with $\mathcal{F}$ replaced by~$1$). Now,
for the same reason,
the orthogonal spaces of constants and functions with mean zero
are invariant for $I+J_n G(\eta)$,
and thus they are also invariant for the inverse.
Therefore, \eqref{N197}
holds for
all
$u\in L_{n}^{2}(\mathbb{T}^{d})$.

Now, note that on $L^2_n(\xT^d)$ the operator 
$I+J_nG(\eta)$ coincides with the self-adjoint operator 
$I+J_nG(\eta)J_n$. Therefore, by an elementary duality argument, the estimate \e{N197} implies 
\e{N198} with $\mu=-1/2$. The general case $\mu\in [-1/2,1/2]$ then follows by interpolation.
\end{proof}

Once the previous lemma is granted, we are in a position to rigorously justify the computations leading to~\e{EQ144}. Indeed, directly from the definition of $\tilde A_n(f)$, we verify that $\tilde A_n(f)u\in L^2_n(\xT^d)^3$ for any $u\in L^2_n(\xT^d)^3$, and hence $\tilde A_n(f)(F_n(f))$ belongs to $L^2_n(\xT^d)^3$ for any $f\in L^2_n(\xT^d)^3$. This implies that it is legitimate to apply
$R_n(\eta)$ to this term.
We are thus reduced to solving the Cauchy problem
  \begin{equation}\label{A3}
  \left\{
  \begin{aligned}
  &\partial_t f=
  \mathcal{F}_n(f),\\
  & f\arrowvert_{t=0}=J_n f_{0},
  \end{aligned}
  \right.
  \end{equation} 
where we denoted by $f_0=(\eta_0,\psi_0,\theta_0)$ the initial data.

\begin{prop}\label{P:theta}
Let $n\in\mathbb{N}\setminus\{0\}$. For every $f_0=(\eta_0,\psi_0,\theta_0)^{T}\in L^{2}(\xT^{d})^3$,
there exists $T_n>0$ such that 
the Cauchy problem~\e{A3} has a unique maximal solution
\begin{equation*}
f_n=(\eta_n,\psi_n,\theta_n)^{T}\in C^{1}\big([0,T_n);L^2_n(\xT^{d})^3\big).
\end{equation*}
Moreover, we have the alternative:
\be\label{A4}
T_n=+\infty\qquad\text{or}\qquad \limsup_{t\rightarrow T_n} \lA f_n(t)\rA_{L^2}=+\infty.
\ee
\end{prop}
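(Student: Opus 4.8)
The plan is to read \eqref{A3} as an ordinary differential equation in the finite-dimensional vector space $L^2_n(\xT^d)^3$ and to invoke the Cauchy--Lipschitz (Picard--Lindel\"of) theorem together with its standard maximal-solution and blow-up alternative. Elements of $L^2_n(\xT^d)$ are trigonometric polynomials of degree at most $n$, hence smooth, so all the notions recalled earlier (the harmonic extension, $G(\eta)$, $B$, $V$, the shape derivative formula) apply without regularity issues; moreover $L^2_n(\xT^d)$ is finite-dimensional, so all norms on it — in particular $\|\cdot\|_{L^2}$, $\|\cdot\|_{H^\mu}$ for $\mu\in[-\mez,\mez]$, and $\|\cdot\|_{W^{1,\infty}}$ — are equivalent (with constants depending on $n$, which is harmless here). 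It therefore suffices to prove that $\mathcal{F}_n\colon L^2_n(\xT^d)^3\to L^2_n(\xT^d)^3$ is well-defined and locally Lipschitz; we will in fact see that it is $C^1$.

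The substance of the proof is the regularity of $\mathcal{F}_n$. By Lemma~\ref{L:Rn}, $\eta\mapsto R_n(\eta)=(I+J_nG(\eta))^{-1}$ is $C^1$ from $L^2_n(\xT^d)$ into $\mathcal{L}(L^2_n(\xT^d))$, and $R_n(\eta)$ maps $L^2_n(\xT^d)$ into itself. Next, $(\eta,\psi)\mapsto G(\eta)\psi$ is $C^1$ from $L^2_n(\xT^d)^2$ into $L^2_n(\xT^d)$: it is linear in $\psi$, while its differentiability in $\eta$, with continuous derivative, follows from Proposition~\ref{P:shape} (one may iterate to obtain higher regularity, the derivative being again built from $G$, $B$, $V$); composition with the bounded projection $J_n$ keeps the range in $L^2_n(\xT^d)$. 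Hence $B=B(\eta)\psi$ and $V=V(\eta)\psi$, given by the algebraic identities \eqref{defi:BV}--\eqref{defi:BV2}, depend smoothly on $(\eta,\psi)$; and since products and compositions of smooth maps between finite-dimensional spaces are smooth, the operator $\Lambda_n(\eta,\psi)$ of \eqref{EQ02}, the nonlinearity $N(\eta,\psi)$ of \eqref{N=}, and the linear term $L\eta$ all depend smoothly on $f=(\eta,\psi,\theta)$. Assembling these, $\tilde A_n(f)$ and $F_n(f)$ are $C^1$ in $f$, and — using the observation already made that $\tilde A_n(f)F_n(f)\in L^2_n(\xT^d)^3$ — the composition $\mathcal{F}_n(f)=R_n(\eta)\bigl(\tilde A_n(f)(F_n(f))\bigr)$ is a $C^1$ map from $L^2_n(\xT^d)^3$ to itself, in particular locally Lipschitz on every ball.

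With this in hand, the Cauchy--Lipschitz theorem gives, for each $f_0\in L^{2}(\xT^{d})^3$, a time $T_n>0$ and a unique $f_n\in C^1([0,T_n);L^2_n(\xT^d)^3)$ solving \eqref{A3} that is maximal with this property, and local uniqueness patches together to make $f_n$ the unique maximal solution. The alternative \eqref{A4} is then classical: if $T_n<\infty$ while $\limsup_{t\to T_n}\|f_n(t)\|_{L^2}<\infty$, then $f_n$ stays in a fixed bounded subset of the finite-dimensional space $L^2_n(\xT^d)^3$ on $[0,T_n)$, on a neighborhood of which $\mathcal{F}_n$ is bounded and Lipschitz; hence $f_n$ is uniformly continuous on $[0,T_n)$, extends continuously to $t=T_n$, and re-applying the local existence theorem at that point produces a solution on $[0,T_n+\delta)$, contradicting maximality.

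The only point that goes beyond routine finite-dimensional ODE theory is the $C^1$-dependence of $(\eta,\psi)\mapsto G(\eta)\psi$ on $L^2_n(\xT^d)$, together with the $C^1$-dependence of $\eta\mapsto R_n(\eta)$; both are exactly what Proposition~\ref{P:shape} and Lemma~\ref{L:Rn} supply, and everything else reduces to the standard existence, uniqueness, and continuation theorems for ODEs.
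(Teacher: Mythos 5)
Your proposal is correct and follows essentially the same strategy as the paper: view \eqref{A3} as an ODE in the finite-dimensional space $L^2_n(\xT^d)^3$, check that $\mathcal{F}_n$ is locally Lipschitz, and invoke the Cauchy--Lipschitz theorem together with the standard continuation principle to get the maximal solution and the alternative \eqref{A4}. The only real difference is how the local Lipschitz property of $\mathcal{F}_n$ is justified: you derive it from a claimed $C^1$-dependence of $(\eta,\psi)\mapsto G(\eta)\psi$ built on the shape derivative formula (Proposition~\ref{P:shape}), whereas the paper obtains it directly from the quantitative estimates of Propositions~\ref{P:DN2} and~\ref{P01} (boundedness of $G(\eta)$ and Lipschitz dependence of $G(\eta)$ on $\eta$ in the $H^{1/2}\to H^{-1/2}$ operator norm), combined with Lemma~\ref{L:Rn} and the smoothing property of $J_n$; note also that the intermediate quantities $G(\eta)\psi$, $B$, $V$, $N(\eta,\psi)$ do not lie in $L^2_n$ before $J_n$ is applied, so the "products of smooth maps between finite-dimensional spaces" shortcut needs the same Sobolev product and mapping bounds anyway. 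The paper's route is the more economical one here, since Proposition~\ref{P:shape} by itself only provides directional (G\^ateaux) derivatives for smooth data, and upgrading that to Fr\'echet $C^1$ (hence locally Lipschitz) again requires continuity estimates of exactly the type that Proposition~\ref{P01} supplies outright; with that substitution your argument matches the paper's, and your treatment of the blow-up alternative is the standard one the paper also uses.
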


\begin{proof}
Note that the operator $J_n$ is smoothing; namely, it is a bounded
operator from $H^r(\xT^d)$ to $H^\mu(\xT^d)$ for all real~$r$ and $\mu$. As a result, it follows from Lemma~\ref{L:Rn}, along with Propositions~\ref{P:DN2} and~\ref{P01}, that 
the operator $f\mapsto \mathcal{F}_n(f)$ is locally Lipschitz from $L^{2}_n(\xT^d)^3$ to itself. 
Therefore, for all $n\in\xN\setminus\{0\}$,~\eqref{A3} is in fact an ODE 
with values in the Banach space $L^2_n(\xT^d)^3$, equipped with the usual $L^2$-norm. Consequently, the Cauchy-Lipschitz theorem gives 
the existence of a unique maximal solution~$f_n\in C^{1}([0,T_n);L^{2}_n(\xT^d))^3$.
The alternative \e{A4} is then a consequence of the usual 
continuation principle for ordinary differential equations.
\end{proof}

\subsection*{Step 2: returning to the original equation.} 
In the previous step, we have solved the augmented system~\eqref{A3}. We now prove that these solutions provide solutions to the original equations.

\begin{lemm}
Let $n\in\xN\setminus\{0\}$, and consider a solution
\begin{equation*}
f=(\eta,\psi,\theta)^{T}\in C^{1}\big([0,T_n);L^2_n(\xT^{d})^3\big)
\end{equation*}
to the Cauchy problem~\e{A3}. If at the initial time $t=0$ there holds 
$$
\theta_0=J_n G(\eta_0)\psi_0,
$$
then $\theta=\partial_t\eta$, for all $t\geq0$. In particular, $(\eta,\psi)$ is then a solution 
of the original Cauchy problem~\e{A1}.
\end{lemm}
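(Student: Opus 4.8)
The plan is to unwind the definition of the augmented flow, read off one row at a time the three scalar equations it encodes, and then show that the mismatch $w:=\theta-\partial_t\eta$ solves a trivial linear ODE with zero initial data.

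First I would observe that, although \eqref{A3} is written as the explicit ODE $\partial_t f=\mathcal F_n(f)$, its solutions still satisfy the implicit system \eqref{EQ139}, i.e. $A_n(f)\partial_t f=F_n(f)$. Indeed $\mathcal F_n(f)=R_n(\eta)\tilde A_n(f)F_n(f)$, and $R_n(\eta)\tilde A_n(f)$ (with $R_n(\eta)$ acting componentwise) is a left inverse of $A_n(f)$ on $L^2_n(\xT^d)^3$, since $\tilde A_n(f)A_n(f)=(I+J_nG(\eta))I_3$ and $R_n(\eta)(I+J_nG(\eta))=I$ by Lemma~\ref{L:Rn}; on the finite-dimensional space $L^2_n(\xT^d)^3$ a left inverse is a two-sided inverse, hence $A_n(f)\mathcal F_n(f)=F_n(f)$, so $A_n(f)\partial_t f = F_n(f)$. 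The three rows of this identity read
\begin{align*}
\partial_t\eta&=J_nG(\eta)\psi,\\
\partial_t\psi+\partial_t\theta&=-J_nL\eta-J_nN(\eta,\psi),\\
\Lambda_n(\eta,\psi)\partial_t\eta-J_nG(\eta)\partial_t\psi+\partial_t\theta&=0,
\end{align*}
for all $t\in[0,T_n)$; in particular the first equation of \eqref{A1} already holds.

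Next I would set $w:=\theta-J_nG(\eta)\psi=\theta-\partial_t\eta$; the hypothesis $\theta_0=J_nG(\eta_0)\psi_0$ is precisely $w(0)=0$. To differentiate $J_nG(\eta)\psi$ in time I would invoke the time-derivative formula \eqref{n:time} of Proposition~\ref{P:t}: since $\eta(t),\psi(t)$ take values in the finite-dimensional space $L^2_n(\xT^d)$ they are $C^\infty$ in $x$ and $C^1$ in $t$, and since $(\eta,\psi)\mapsto G(\eta)\psi$ is of class $C^1$ (as already used in the proof of Lemma~\ref{L:Rn}), the chain rule makes \eqref{n:time} rigorous in this setting. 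Applying $J_n$, using the linearity of $G(\eta)$ in its second argument and the definition \eqref{EQ02} of $\Lambda_n$, this gives
\begin{equation*}
\partial_t\bigl(J_nG(\eta)\psi\bigr)=J_nG(\eta)\partial_t\psi-J_nG(\eta)(B\partial_t\eta)-J_n\cnx(V\partial_t\eta)=J_nG(\eta)\partial_t\psi-\Lambda_n(\eta,\psi)\partial_t\eta.
\end{equation*}
Subtracting this from $\partial_t\theta$ and comparing with the third row above yields $\partial_t w=0$ on $[0,T_n)$; together with $w(0)=0$ this forces $w\equiv0$, i.e. $\theta=J_nG(\eta)\psi=\partial_t\eta$.

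Finally, with $\theta=\partial_t\eta\in C^1$ we get $\eta\in C^2$ in time and $\partial_t^2\eta=\partial_t\theta$, so the second row above becomes $\partial_t\psi+\partial_t^2\eta+J_n(N(\eta,\psi)+L\eta)=0$, which is the second equation of \eqref{A1}; combined with the first row and the matching of initial data ($f|_{t=0}=J_nf_0$ gives $(\eta,\psi)|_{t=0}=J_n(\eta_0,\psi_0)$), we conclude that $(\eta,\psi)$ solves \eqref{A1}. I expect the only genuinely non-routine point to be the justification of the time-derivative formula under merely $C^1$-in-time regularity, which is exactly where finite-dimensionality of $L^2_n$ is exploited; everything else is bookkeeping with the rows of $A_n(f)$.
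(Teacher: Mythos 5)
Your proposal is correct and takes essentially the same route as the paper: differentiate $\theta-J_nG(\eta)\psi$ via the time-derivative formula \eqref{n:time}, use the third component of the system together with the definition \eqref{EQ02} of $\Lambda_n$ to see that this difference is constant in time, and conclude from the vanishing at $t=0$. Your only addition is the explicit left-inverse argument showing that solutions of \eqref{A3} satisfy $A_n(f)\partial_t f=F_n(f)$, a point the paper treats as the already-noted equivalence of \eqref{EQ139} and \eqref{EQ144}.
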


\begin{proof}
Using the time derivative formula~\e{n:time} for the derivative of the
Dirichlet-to-Neumann map, we write
\begin{align*}
  \begin{split}
  &\partial_{t} (\theta-\partial_t \eta)=\partial_{t} (\theta-J_n G(\eta)\psi)\\
&\indeq
=\partial_{t}\theta-J_n\bigl(G(\eta)(\partial_{t}\psi-B \partial_{t}\eta)\bigr)
+J_n \cnx (V\partial_{t}\eta)\\
&\indeq
= - \Lambda_n(\eta,\psi)\partial_{t}\eta
+ J_n G(\eta) \partial_{t}\psi
- J_n G(\eta) \partial_{t}\psi
 + J_n G(\eta)(B\partial_{t}\eta)
     + J_n \cnx (V\partial_{t}\eta)
  \\
  &\indeq=0,
  \end{split}
\end{align*}
where in the last step we
canceled the second and third terms and
used the definition~\eqref{EQ02}.
Since $\theta(0)-\partial_{t}\eta(0)=0$ by assumption, we obtain
$\theta=\partial_{t}\eta$ for all~$t\geq0$.
\end{proof}

From now on, given $\eta_0$ and $\psi_0$, we fix $\theta_0$ such that
$$
\theta_0=J_n G(\eta_0)\psi_0.
$$
Then the previous lemma implies that the solution $f_n=(\eta_n,\psi_n,\theta_n)^T$ given by Proposition~\ref{P:theta} 
satisfies $\theta_n=\partial_t\eta_n$.

Let us pause now to show that the mean values
of $\eta_n(t,\cdot)$ and $\psi_n(t,\cdot)$ vanish.

\begin{lemm}
\label{L02}
For all $n\in \xN$ and all $t\ge 0$,
\be\label{mean0}
\int_{\xT^d}\eta_n(t,x)\dx=0\quad\text{and}\quad
\int_{\xT^d}\psi_n(t,x)\dx=0.
\ee
\end{lemm}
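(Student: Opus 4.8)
The plan is to prove \eqref{mean0} by integrating the two equations of the approximate system \eqref{A1} over the torus and exploiting two facts: that $G(\eta)$ annihilates constants and produces functions with zero mean, and that $J_n$ preserves the mean for $n\ge 1$ (indeed $J_n$ fixes the zero Fourier mode). First I would recall that for any $\eta\in C^\infty(\xT^d)$ one has $\int_{\xT^d} G(\eta)\psi\,\dx = 0$; this follows because $G(\eta)\psi = \partial_n\phi\sqrt{1+|\nabla\eta|^2}|_{y=\eta}$ integrates, by the divergence theorem applied to $\nabla_{x,y}\phi$ on $\Omega$ (together with the decay from Proposition~\ref{Lemma:decayinfty}), to $\iint_\Omega \Delta_{x,y}\phi\,\dydx = 0$. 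One also needs $G(\eta)c = 0$ for constants $c$, since the harmonic extension of a constant is that constant, so $\nabla_{x,y}\phi\equiv 0$; the excerpt explicitly defers this identity to ``the proof of Lemma~\ref{L02}'', so I would include this one-line argument here.

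Next I would integrate the first equation of \eqref{A1}. Since $\widehat{J_n u}(0) = \hat u(0)$ for $n\ge 1$, we have $\int_{\xT^d} J_n G(\eta_n)\psi_n\,\dx = \int_{\xT^d} G(\eta_n)\psi_n\,\dx = 0$, hence $\frac{d}{dt}\int_{\xT^d}\eta_n\,\dx = 0$. Combined with the initial condition $\eta_n|_{t=0} = J_n\eta_0$, whose mean equals the mean of $\eta_0$, which is zero by \eqref{IEQ90}, we get $\int_{\xT^d}\eta_n(t,x)\,\dx = 0$ for all $t\ge 0$. This is the displayed equation \eqref{EQ14} alluded to after \eqref{IEQ90}.

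Then I would integrate the second equation of \eqref{A1}. The term $J_n(N(\eta_n,\psi_n) + L\eta_n)$ has zero mean after integration, again because $J_n$ preserves the mean and because $\int_{\xT^d} L\eta_n\,\dx = g\int\eta_n + \int \Delta^2\eta_n = 0$ (the first integral vanishes by the previous step, the second since it is an integral of a derivative), while $\int_{\xT^d} N(\eta_n,\psi_n)\,\dx$ we can handle via Proposition~\ref{P02}: for each fixed $t$, $N(\eta_n,\psi_n)$ is the $x$-divergence of an $L^1$ vector field, so its integral over $\xT^d$ is zero. (Alternatively one can just note that $N(\eta_n,\psi_n)$ is smooth here, and use \eqref{EQ31}.) Therefore $\frac{d}{dt}\int_{\xT^d}(\psi_n + \partial_t\eta_n)\,\dx = 0$; since $\int\partial_t\eta_n = \frac{d}{dt}\int\eta_n = 0$ we deduce $\frac{d}{dt}\int_{\xT^d}\psi_n\,\dx = 0$, and the initial condition $\psi_n|_{t=0} = J_n\psi_0$ together with \eqref{IEQ90} gives $\int_{\xT^d}\psi_n(t,x)\,\dx = 0$ for all $t\ge 0$.

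The only mild subtlety — which I would treat as the ``main point'' rather than a genuine obstacle — is justifying that the solution $f_n$ given by Proposition~\ref{P:theta} actually has the required smoothness to carry out these manipulations and to invoke Proposition~\ref{P02}; this is immediate since $L^2_n(\xT^d)$ consists of trigonometric polynomials, so $\eta_n(t),\psi_n(t)\in C^\infty(\xT^d)$ for each $t$, and $t\mapsto\eta_n(t),\psi_n(t)$ is $C^1$, which is more than enough to differentiate under the integral sign and to apply the identities for $G(\eta)$ and for $N(\eta,\psi)$. Everything else is a routine combination of the divergence theorem and the mean-preserving property of the Fourier truncation $J_n$.
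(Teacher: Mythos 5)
Your proposal is correct and follows essentially the same route as the paper: integrate the two equations of~\eqref{A1} over $\xT^d$, use that $J_n$ fixes the zero Fourier mode, prove $\int_{\xT^d} G(\eta_n)\psi_n\,\dx=0$ by the divergence theorem together with the decay at $y\to-\infty$, and kill $\int N(\eta_n,\psi_n)\,\dx$ by invoking the divergence form~\eqref{EQ31} from Proposition~\ref{P02}. The one place you are slightly more careful than the paper is in explicitly accounting for the $\partial_t^2\eta_n$ term in the second equation (via $\int\partial_t\eta_n=\frac{\diff}{\dt}\int\eta_n=0$), which the paper's displayed computation~\eqref{EQ15} silently suppresses after having established $\int\eta_n\equiv0$; this is a harmless presentational difference, not a different method.
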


\begin{proof}
This is proven in~\cite{AZ-2023virial}, while here we provide another proof based on the identity~\eqref{EQ31} for~$N(\eta_n,\psi_n)$. Note that $J_0 f=(2\pi)^{-d}\int f$.
For $\eta_n$, we have
  \begin{align}
  \begin{split}
   \partial_{t}J_0\eta_n
   &=
   J_0 J_n G(\eta_n) \psi_n
   = J_0 G(\eta_n) \psi_n
   = (2\pi)^{-d}\int G(\eta_n) \psi_n\dx
   = 0
   ,
  \end{split}
   \label{EQ14}
  \end{align}
where in the second equality we used
$J_0 J_n f = J_0 f$ for all $n\in\mathbb{N}$. 
It remains to justify the last step in \eqref{EQ14}.
Let $\phi_n$ be the harmonic function corresponding to
$\psi_n$ and $\eta_n$. For every $y_0\leq - C \Vert \eta_n\Vert_{W^{1,\infty}}$, we have
  \begin{align}
  \begin{split}
   &
   \int_{\mathbb{T}} G(\eta_n) \psi_n
   = -\int_{\mathbb{T}}  \partial_y \phi (\cdot, y_0)\dx
  \end{split}
   \llabel{EQ07}
  \end{align}
by the divergence theorem. The desired assertion $\int G(\eta_n)\psi_n\dx=0$ then
follows by sending $y_0\to-\infty$ and applying \eqref{decaytozero}.

From \eqref{EQ14} and
$\int \eta_0 = 0 $, we then
obtain $\int \eta_n=0$, for all~$t\ge0$.

For $\psi_n$, we have
  \begin{align}
  \begin{split}
   \partial_{t} J_0\psi_n
    &=
   -
   J_0
   J_n \bigl(
      N(\eta_n,\psi_n) 
      +L(\eta_n)
       \bigr)
  \\&
  =
  - (2\pi)^{-d}
    \int
      N(\eta_n,\psi_n) 
  - (2\pi)^{-d}
    \int L(\eta_n)
  = 0
  ,
  \end{split}
   \label{EQ15}
  \end{align}
where we employed \eqref{EQ31} in the last step (this is legitimate since $L^2_n(\xT^d)\subset H^s(\xT^d)$ for any $s\geq0$). 
Using \eqref{IEQ90}, we thus obtain
$\int \psi_n=0$ for all~$t\geq0$.
\end{proof}

\subsection*{Step 3: conservation of the 
energy} 

The next step is to prove the following result.

\begin{lemm}\label{L:energy}
Consider an integer $n\in \xN\setminus \{0\}$, a time $T>0$, and a solution 
$(\eta,\psi)\in C^{\infty}\big([0,T];L^2_n(\xT^{d})^2\big)$ 
to the Cauchy problem~\e{A1}. Then 
\begin{equation} \label{EQ03}
\fract\mathcal{E} = 0    ,
\end{equation}
where $\mathcal{E}$ is defined in~\eqref{EQ91}:
\begin{equation}
     2\mathcal{E}(\eta,\psi)
     =
     \int_{\xT^d}
     \psi G(\eta)\psi \dx
       + \int_{\xT^d} (\partial_{t}\eta)^{2} \dx
       + \int_{\xT^d} (\Delta\eta )^{2} \dx
       + g \int_{\xT^d} \eta^{2} \dx
       .
   \label{EQ04}
  \end{equation}
\end{lemm}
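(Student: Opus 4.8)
The plan is to differentiate the energy $\mathcal{E}$ in time, using that along a smooth solution of the Galerkin system \eqref{A1} every term is a smooth function of $t$ with values in the finite-dimensional space $L^2_n$, so all manipulations (integration by parts, the shape-derivative formula, commuting $J_n$ with $\partial_t$) are fully rigorous. Writing out $\frac{\diff}{\dt}(2\mathcal{E})$, I get four contributions:
\begin{equation*}
\fract(2\mathcal{E})
=\fract\!\int \psi G(\eta)\psi\dx
+2\!\int \partial_t\eta\,\partial_t^2\eta\dx
+2\!\int \Delta\eta\,\Delta\partial_t\eta\dx
+2g\!\int \eta\,\partial_t\eta\dx.
\end{equation*}
For the first term I would use the time-derivative formula \eqref{n:time} for $G(\eta)\psi$ together with the symmetry of $G(\eta)$; the standard computation (Zakharov's Hamiltonian structure) gives
\begin{equation*}
\fract\!\int \psi G(\eta)\psi\dx
=2\!\int \partial_t\psi\,G(\eta)\psi\dx
-\!\int \big(B^2-|V|^2\big)\,G(\eta)\psi\dx
+\text{(correction terms)},
\end{equation*}
which, after using $\partial_t\eta=G(\eta)\psi$ and the identity $N(\eta,\psi)=\mez|V|^2-\mez B^2+B(V\cdot\nabla\eta)$, should collapse to $2\int \partial_t\psi\,\partial_t\eta\dx+2\int N(\eta,\psi)\,\partial_t\eta\dx$. (I would double-check the algebra here against the shape-derivative formula and Lemma~\ref{cancellation}; this is where sign bookkeeping is most delicate.)

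Next I would substitute the equation of motion. From the second line of \eqref{A1}, applied against the test function $\partial_t\eta\in L^2_n$ and using $J_n=J_n^*$ with $\partial_t\eta,\psi\in L^2_n$, we have $\int(\partial_t\psi+\partial_t^2\eta)\partial_t\eta\dx=-\int J_n(N(\eta,\psi)+L\eta)\,\partial_t\eta\dx=-\int (N(\eta,\psi)+L\eta)\,\partial_t\eta\dx$. Similarly, the first line gives $\partial_t\eta=J_nG(\eta)\psi$, and since $\partial_t\eta,\psi\in L^2_n$ one may freely insert or remove $J_n$ when pairing $G(\eta)\psi$ with elements of $L^2_n$; this is what legitimizes replacing $\int \partial_t\psi\,G(\eta)\psi\dx$ by $\int \partial_t\psi\,\partial_t\eta\dx$ above. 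Plugging these in, the terms $2\int N(\eta,\psi)\partial_t\eta\dx$ cancel, $2\int\partial_t\psi\,\partial_t\eta\dx+2\int\partial_t^2\eta\,\partial_t\eta\dx$ matches exactly the first two structural terms of the derivative, and the $L\eta$ contribution is $-2\int L\eta\,\partial_t\eta\dx=-2\int(\Delta^2\eta+g\eta)\partial_t\eta\dx=-2\int\Delta\eta\,\Delta\partial_t\eta\dx-2g\int\eta\,\partial_t\eta\dx$ after integrating by parts twice (no boundary terms on $\xT^d$), which cancels the remaining two terms $2\int\Delta\eta\,\Delta\partial_t\eta\dx+2g\int\eta\,\partial_t\eta\dx$. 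Hence $\frac{\diff}{\dt}\mathcal{E}=0$.

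The main obstacle is the first step: carefully deriving $\frac{\diff}{\dt}\int\psi G(\eta)\psi\dx=2\int\partial_t\psi\,G(\eta)\psi\dx+2\int N(\eta,\psi)\,\partial_t\eta\dx$, i.e.\ verifying that the ``$\eta$-variation'' part of the derivative is exactly $2\int N\,\partial_t\eta$. Concretely, $\fract\int\psi G(\eta)\psi\dx=\int\partial_t\psi\,G(\eta)\psi\dx+\int\psi\,\partial_t(G(\eta)\psi)\dx+\int\partial_t\psi\,G(\eta)\psi\dx$ after using self-adjointness of $G(\eta)$ on the middle term's pairing; then \eqref{n:time} gives $\int\psi\,\partial_t(G(\eta)\psi)\dx=\int\psi\,G(\eta)(\partial_t\psi-B\partial_t\eta)\dx-\int\psi\,\cnx(V\partial_t\eta)\dx=\int G(\eta)\psi\,(\partial_t\psi-B\partial_t\eta)\dx+\int\nabla\psi\cdot V\,\partial_t\eta\dx$, so the net $\eta$-dependent piece is $\int\big(-B\,G(\eta)\psi+V\cdot\nabla\psi\big)\partial_t\eta\dx$, and one checks via \eqref{defi:BV}--\eqref{defi:BV2} and $G(\eta)\psi=B-V\cdot\nabla\eta$ that $-B\,G(\eta)\psi+V\cdot\nabla\psi=-B(B-V\cdot\nabla\eta)+V\cdot(V+B\nabla\eta)=|V|^2-B^2+2B(V\cdot\nabla\eta)=2N(\eta,\psi)$. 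Everything else is routine periodic integration by parts; since all functions live in $L^2_n\subset C^\infty(\xT^d)$ and depend smoothly on $t$, there are no regularity issues to track.
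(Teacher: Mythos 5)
Your proof is correct, and it uses the same essential ingredients as the paper's proof: the time-derivative formula~\eqref{n:time} for $G(\eta)\psi$, self-adjointness of $G(\eta)$, the identities~\eqref{defi:BV}--\eqref{defi:BV2} together with $G(\eta)\psi=B-V\cdot\nabla\eta$, and the observation that $J_n$ can be freely inserted/removed when pairing against elements of $L^2_n$. The organization differs slightly: the paper starts from the equations, multiplies the first by $-\partial_t\psi$ and the second by $\partial_t\eta$, adds, and then cancels the resulting integrals $I_1$, $I_2$, $I_3$ term by term (using the form~\eqref{N2=} of $N$); you instead differentiate $\mathcal{E}$ directly and isolate the clean algebraic identity $-B\,G(\eta)\psi + V\cdot\nabla\psi = 2N$, which packages the $\eta$-variation of the kinetic energy $\tfrac12\int\psi G(\eta)\psi$ as exactly $N\,\partial_t\eta$ before substituting the equations. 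This is a somewhat more transparent presentation of the Hamiltonian cancellation, but it is the same computation underneath. One small point: your intermediate displayed formula for $\fract\int\psi G(\eta)\psi\dx$ containing ``$-\int(B^2-|V|^2)G(\eta)\psi\dx + \text{(correction terms)}$'' does not match what you later prove (the correct $\eta$-variation is $\int(-BG(\eta)\psi+V\cdot\nabla\psi)\partial_t\eta\dx=2\int N\partial_t\eta\dx$, not $\int(|V|^2-B^2)G(\eta)\psi\dx$); your final paragraph gets it right, so this is only a slip in the preliminary sketch, not in the proof itself.
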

\begin{proof}
To obtain the identity~\e{EQ03}, we first
multiply the first equation in \eqref{A1} by $-\partial_{t}\psi$, the
second by $\partial_{t}\eta$, integrate in the $x$ variable on
$\xT^d$, and then add the two resulting equations to obtain
\begin{multline} \label{Energy}
    \frac{1}{2} \fract \int_{\xT^d} (\partial_t \eta)^{2} \dx
     +    \int_{\xT^d}J_{n}G(\eta)\psi \, \partial_t\psi \dx
     \\
     + \int_{\xT^d} J_{n} N \partial_t \eta\dx
     + \int_{\xT^d} J_{n}L(\eta) \partial_t \eta  \dx  = 0
    .
     \end{multline}
Denote the second, third, and fourth terms by $I_1$, $I_2$, and~$I_3$.

Starting with $I_1$, we write
  \begin{align*}
    \begin{split}
    I_{1} & = \fract \int_{\xT^d} G(\eta)\psi \, J_{n} \psi \dx
       -\int_{\xT^d}  \fract ( G(\eta)\psi) \, J_{n}\psi \dx  \\
       & =  \fract \int_{\xT^d} \psi G(\eta)\psi \dx
      - \int_{\xT^d}  \psi  G(\eta)(\partial_t \psi- B \partial_t \eta)   \dx  + \int_{\xT^d}   \cnx_{x} (V \partial_t \eta)  \, \psi \dx 
  ,
  \end{split}
    \end{align*}  
where we used the time derivative formula \eqref{n:time}
for the Dirichlet-to-Neumann map  and the fact $J_{n} \psi= \psi$.
We next integrate by parts in the last term and replace $V$ with its expression \eqref{defi:BV2} to get
 \begin{align*}
   \begin{split}
    I_{1} &   =  \fract \int_{\xT^d} \psi G(\eta)\psi \dx
         - \int_{\xT^d}    \psi G(\eta)(\partial_t \psi- B \partial_t \eta)  \dx
        \\&\indeq
   - \int_{\xT^d}  \partial_t \eta ( \nabla \psi -B \nabla \eta )  \cdot \nabla \psi \dx  \\
  & =  \fract \int_{\xT^d} \psi G(\eta)\psi\dx
        - \int_{\xT^d} \psi  G(\eta)\partial_t\psi \dx
    \\&\indeq
        + \int_{\xT^d} \psi  G(\eta)(B \partial_t \eta) \dx  -   \int_{\xT^d}  \partial_t \eta ( \nabla \psi -B \nabla \eta )  \cdot \nabla  \psi \dx
  .
  \end{split}
\end{align*}  
Note that by self-adjointness of the 
Dirichlet-to-Neumann map on $L^{2}(\xT^d)$, we have
$\int_{\xT^d}   \psi G(\eta)\partial_t \psi\dx = I_{1}$, and thus we  get
  \begin{align*}
    \begin{split}
    I_{1} &   =  \frac{1}{2}\fract \int_{\xT^d} \psi G(\eta)\psi \dx + \frac{1}{2} \int_{\xT^d} \psi  G(\eta) ( B \partial_t \eta) \dx
    \\&\indeq
    -  \frac{1}{2}\int_{\xT^d}  \partial_t \eta ( \nabla \psi -B \nabla \eta )  \cdot \nabla\psi \dx  \\
  & =  \frac{1}{2}\fract \int_{\xT^d} \psi G(\eta)\psi \dx  + \frac{1}{2}\int_{\xT^d} \psi G(\eta)(B \partial_t \eta)  \dx
   \\&\indeq
   -  \frac{1}{2} \int_{\xT^d}  |\nabla \psi|^{2} \partial_t \eta \dx + \frac{1}{2} \int_{\xT^d} B \nabla \eta \cdot  \nabla  \psi \, \partial_t \eta  \dx
  .
  \end{split}
 \end{align*}  
The term $I_{2}$ (the third term in \eqref{Energy}) may be expressed 
using the identity \eqref{N2=} for the nonlinear term $N$ as
   \begin{align*}
     \begin{split}
I_{2}    & = -\frac{1}{2} \int_{\xT^d} J_{n}B (G(\eta) \psi + \nabla \psi 
\cdot \nabla \eta) \partial_t \eta   \dx +   \frac{1}{2} \int_{\xT^d}  J_{n} (| \nabla  \psi |^{2})  \partial_t \eta \dx \\
    &= -\frac{1}{2} \int_{\xT^d} \psi G(\eta) (B \partial_t \eta) \dx - \frac{1}{2}\int_{\xT^d} B  \nabla \psi 
    \cdot \nabla \eta \,  \partial_t \eta \dx  +  \frac{1}{2} \int_{\xT^d}   | \nabla \psi |^{2}  \partial_t \eta \dx 
  ,
  \end{split}
 \end{align*}
where we again used the self-adjointness of the Dirichlet-to-Neumann map $G(\eta)$ on~$L^{2}(\xT^d)$.
Adding the expressions for $I_{1}$ and $I_{2}$, we see that
 \begin{align}
   \begin{split}
I_{1} + I_{2} =   \frac{1}{2}\fract \int_{\xT^d} 
\psi  G(\eta)\psi \dx  
  .
  \end{split}
   \label{EQ103}
\end{align} 
As for the term $I_{3}$, we use \eqref{EQ125} to derive
\begin{align}
  \begin{split}
I_{3} = \frac{1}{2} \fract \int_{\xT^d} (\Delta  \eta)^{2} \dx +  \frac{g}{2} \fract \int_{\xT^d} \eta^{2} \dx
,
  \end{split}
   \label{EQ104}
\end{align}
and then \eqref{Energy}, \eqref{EQ103}, and \eqref{EQ104} lead
to~\eqref{EQ03}.
\end{proof}

\subsection*{Step 4: the global existence} 

To conclude the proof of Proposition~\ref{T2}, it only remains to prove that
\begin{equation*}
T_n=+\infty\quad \text{for all} \quad n\in\mathbb{N}.
\end{equation*}
In light of 
the dichotomy \eqref{A4}, 
the previous claim 
is proven if we establish an estimate for  
$\lA (\eta_n,\psi_n,\theta_n)(t)\rA_{L^2}$ which is uniform in $n$ and in $t$. 
To do this, recall that
  \begin{equation*}
   \mathcal{E}(\eta_n,\psi_n)(t)=
\mathcal{E}(\eta_n,\psi_n)(0)
    \comma t\in [0,T_n)
    ,
  \end{equation*}
where $\mathcal{E}(\eta,\psi)$ is defined in~\eqref{EQ04}.

Since $\theta_n=\partial_t\eta_n$, 
we have
$$
\lA \theta_n\rA_{L^2}^2\le 2\mathcal{E}(\eta_n,\psi_n).
$$
On the other hand, for any $g\ge 0$, we also have an elementary estimate
$$
2\mathcal{E}(\eta_n,\psi_n)\ge \int_{\xT^d} (\Delta\eta_n )^{2} \dx
       + g \int_{\xT^d} \eta_n^{2} \dx \ge  \lA \eta_n\rA_{H^2}^2\ge  \lA \eta_n\rA_{L^2}^2.
$$
Note that the above inequality also holds for $g=0$ since the mean-value of $\eta_n$ vanishes for all time (see Lemma~\ref{L02}). 

It remains to control the $L^2$-norm of $\psi_n$. For this, we will apply a non-trivial variant of the trace inequality, and this is where the assumption $1\le d\le 2$ enters. 
Namely, we use the following inequality
(see~\cite{A-Berkeley,AZ-2023virial}):
For any $d\ge 1$, there exists $c=c(d)$ such that, for all $\eta\in W^{1,\infty}(\xT^d)$ and $\psi\in H^{1/2}(\xT^d)$, we have
\be\label{Ntrace}
\int_{\xT^d}
     \psi G(\eta)\psi \dx
     \ge \frac{c}{1+\lA \nabla\eta\rA_{{\rm BMO}(\xT^d)}}\lA \psi\rA_{\dot{H}^{1/2}(\xT^d)}^2,
\ee
where $c>0$ and
${\rm BMO}(\xT^d)$ stands for the space of functions with bounded mean oscillations. In light of the classical embedding 
$H^1(\xT^d)\subset {\rm BMO}(\xT^d)$ for $d\le 2$, we get 
$$
d\le 2 ~\Rightarrow ~2\mathcal{E}(\eta_n,\psi_n)\ge \int_{\xT^d}
     \psi_n G(\eta_n)\psi_n \dx
     \ge \frac{c'}{1+\lA \eta_n\rA_{H^2(\xT^d)}}\lA \psi_n\rA_{\dot{H}^{1/2}(\xT^d)}^2,
$$
where $c'>0$.
Then, it follows from the conservation of energy and 
the uniform estimate of the $H^2$-norm of $\eta_n$ that
$$
\lA \psi_n\rA_{\dot{H}^{1/2}(\xT^d)}^2
\lec \frac{1+\mathcal{E}(\eta_n,\psi_n)^{1/2}}{c'}\mathcal{E}(\eta_n,\psi_n).
$$
Now, since the mean-value of $\psi_n$ is zero (see Lemma~\ref{L02}),
this immediately implies a uniform bound for the $L	^2$-norm of
$\psi_n$, which completes the proof of Proposition~\ref{T2}.

\section{Proof of the main theorem}

From here on, we assume that $d=1$. 
Also, recall from \eqref{IEQ90} that the mean values of $\eta_0$ and
$\psi_0$ vanish.

\subsection{Uniform bounds.} 
We begin by proving 
that the approximate solutions $(\eta_{n},\psi_{n})$ satisfy the
necessary uniform bounds that will allow 
us to pass to the limit using the Arzela-Ascoli theorem. 

Hereafter, given a function space $X\subset H^{-2}(\xT)$, we use $L^\infty X$ as a compact notation for $L^\infty([0,+\infty);X)$.

Directly from the definition~\e{EQ91} of the energy and from the energy conservation property \eqref{EQ03}, we have 
\be\label{N00}
\mathcal{E}(\eta_n,\psi_n)(t)=\mathcal{E}(\eta_n,\psi_n)(0)
,
\ee
where we also recall~\eqref{EQ04}.

We begin by repeating arguments used in the last step of the previous section. 
Here again we use the classical trace inequality (see~\e{EQ149}) which reads
\begin{equation}
\int_{\mathbb{T}}
     \psi G(\eta)\psi \dx\ge C(\lA\partial_x\eta\rA_{L^\infty})\lA \psi\rA_{\dot{H}^\mez(\xT)}^2
     ,
\label{EQ149b}
\end{equation}
where $\lA \psi\rA_{\dot{H}^\mez(\xT)}=\blA \psi-\hat{\psi}(0)\brA_{H^\mez(\xT)}$.

Then, we begin by listing some uniform estimates for~$\eta_n$. 
As a trivial consequence of
\eqref{N00},
we immediately infer that
\begin{equation*}
\sup_{n\in \xN}\lA \eta_n\rA_{L_t^{\infty}H_x^2}<+\infty.
\end{equation*}
In light of the Sobolev embedding, this implies
\begin{equation}
\sup_{n\in \xN}\lA \partial_x\eta_n\rA_{L_t^{\infty}L_x^\infty}<+\infty,
\label{N02}
\end{equation}
and hence
\eqref{EQ149b}
and
$\int_{\mathbb{T}}\psi_nG(\eta_n)\psi_n\leq \frac12 \mathcal{E}(\eta_n,\psi_n)$ 
imply
\begin{equation}\llabel{N03}
\sup_{n\in\xN}\lA \psi_n\rA_{L_t^{\infty}\dot{H}_x^\mez}<+\infty.
\end{equation}
Note that, since the mean value of $\psi_n(t,\cdot)$ vanishes according to Lemma~\ref{L02}, we can always replace homogeneous Sobolev norms 
by inhomogeneous ones. This gives
\begin{equation}\label{N04}
\sup_{n\in\xN}\lA \psi_n\rA_{L_t^{\infty}H_x^\mez}<+\infty.
\end{equation}
However, one may prove a stronger result.

\begin{lemm}
There holds
\begin{equation}\label{N05i}
\sup_{n\in\xN}\lA \psi_n\rA_{L_t^{\infty}H_x^{1}}<+\infty.
\end{equation}
\end{lemm}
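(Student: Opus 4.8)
The plan is to upgrade the uniform bound on $\psi_n$ from $L^\infty_t H^{1/2}_x$ (already available via the trace inequality and energy conservation) to $L^\infty_t H^1_x$ by exploiting the extra regularity that the fourth-order plate term forces on $\partial_t\eta_n$, combined with the Rellich-type (normal-trace) identity for the Dirichlet--Neumann operator mentioned in the introduction. Concretely, from energy conservation we already know $\eta_n$ is bounded in $L^\infty_t H^2_x$ and $\theta_n=\partial_t\eta_n$ is bounded in $L^\infty_t L^2_x$. The first equation gives $\partial_t\eta_n = J_n G(\eta_n)\psi_n$, so $J_n G(\eta_n)\psi_n$ is bounded in $L^\infty_t L^2_x$; since $J_n$ is the sharp Fourier projector, this says precisely that the low-frequency part (frequencies $\le n$) of $G(\eta_n)\psi_n$ is bounded in $L^2_x$ uniformly in $n$ and $t$. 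The idea is that control of $G(\eta_n)\psi_n$ in $L^2$ should, via a Rellich/coercivity inequality for $G(\eta)$, translate into control of $\psi_n$ in $\dot H^1_x$.

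The key steps, in order, would be: first, record from \eqref{N00}, \eqref{EQ91} and Lemma~\ref{L02} that $\|\eta_n\|_{L^\infty_t H^2_x}$, $\|\partial_t\eta_n\|_{L^\infty_t L^2_x}$ and (via \eqref{EQ149b}, \eqref{N04}) $\|\psi_n\|_{L^\infty_t H^{1/2}_x}$ are all bounded uniformly in $n$. Second, observe via the first equation in \eqref{A1} that $J_n G(\eta_n)\psi_n=\partial_t\eta_n$ is bounded in $L^\infty_t L^2_x$. Third, invoke a trace/Rellich inequality for the Dirichlet--Neumann operator of the form
\begin{equation*}
\|\psi\|_{\dot H^1(\xT)}^2 \le \mathcal F(\|\nabla\eta\|_{L^\infty})\Big(\|G(\eta)\psi\|_{L^2}^2 + \|\psi\|_{\dot H^{1/2}}^2\Big),
\end{equation*}
which is exactly the ``refined version of this trace inequality'' promised after \eqref{N01}; with $d=1$ one can instead argue directly using Proposition~\ref{coro:paraDN1}, writing $G(\eta_n)\psi_n=\la D_x\ra\psi_n+R(\eta_n)\psi_n$ where $R(\eta_n)$ gains $\eps$ derivatives, so that $\|\psi_n\|_{\dot H^1}\lesssim \|G(\eta_n)\psi_n\|_{L^2}+\|R(\eta_n)\psi_n\|_{L^2}\lesssim \|G(\eta_n)\psi_n\|_{L^2}+\mathcal F(\|\eta_n\|_{H^s})\|\psi_n\|_{H^{3/2-\eps}}$, and interpolate $H^{3/2-\eps}$ between $H^{1/2}$ and $H^1$ to absorb the last term. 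Fourth, one must handle the mollifier: since $\|J_n G(\eta_n)\psi_n\|_{L^2}$ rather than $\|G(\eta_n)\psi_n\|_{L^2}$ is what is bounded, apply the coercivity estimate to $J_n\psi_n$ (or split $\psi_n=J_n\psi_n$ since $\psi_n\in L^2_n$, hence $\psi_n=J_n\psi_n$ and $G(\eta_n)\psi_n$ need not lie in $L^2_n$)---here one uses that on $L^2_n$ the operator $I+J_nG(\eta_n)$ behaves like $I+J_nG(\eta_n)J_n$ as in Lemma~\ref{L:Rn}, so that testing $J_nG(\eta_n)\psi_n$ against $\psi_n$ gives $\int \psi_n G(\eta_n)\psi_n = \int \psi_n J_n G(\eta_n)\psi_n \le \|\psi_n\|_{L^2}\|\partial_t\eta_n\|_{L^2}$, and more refined testing against $\la D_x\ra\psi_n$ yields the $\dot H^1$ bound. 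Fifth, conclude \eqref{N05i} by combining with Lemma~\ref{L02} to pass from $\dot H^1$ to $H^1$.

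The main obstacle is Step four: the sharp Fourier cutoff $J_n$ does not commute with $G(\eta_n)$ (which has variable coefficients), so the quantity one actually controls, $J_nG(\eta_n)\psi_n$, is not simply $G(\eta_n)$ of something nice. The cleanest route is probably not to try to deduce an $L^2$ bound on the full $G(\eta_n)\psi_n$, but rather to test the identity $\partial_t\eta_n=J_nG(\eta_n)\psi_n$ directly against a well-chosen multiplier: pairing with $\la D_x\ra\psi_n$ and using $J_n\psi_n=\psi_n$, self-adjointness of $J_n$, and the paralinearization $G(\eta_n)=\la D_x\ra+R(\eta_n)$, one gets $\int \partial_t\eta_n\, \la D_x\ra\psi_n\,\dx = \int \la D_x\ra\psi_n\cdot\la D_x\ra\psi_n\,\dx + \int \la D_x\ra\psi_n\cdot R(\eta_n)\psi_n\,\dx$, i.e. $\|\psi_n\|_{\dot H^1}^2 = \int \partial_t\eta_n\,\la D_x\ra\psi_n\,\dx - \int \la D_x\ra\psi_n\, R(\eta_n)\psi_n\,\dx$. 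The first term is bounded by $\|\partial_t\eta_n\|_{L^2}\|\psi_n\|_{\dot H^1}$, the second by $\mathcal F(\|\eta_n\|_{H^s})\|\psi_n\|_{\dot H^1}\|\psi_n\|_{H^{3/2-\eps}}$ with $\eps\le 1/2$; interpolating $\|\psi_n\|_{H^{3/2-\eps}}\le \|\psi_n\|_{H^{1/2}}^{\theta}\|\psi_n\|_{\dot H^1}^{1-\theta}$ for a suitable $\theta>0$ and using Young's inequality absorbs the top-order factor, leaving a bound on $\|\psi_n\|_{\dot H^1}$ in terms of $\|\partial_t\eta_n\|_{L^2}$ and $\|\psi_n\|_{H^{1/2}}$, both already controlled uniformly. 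One subtlety to address carefully is that $\eta_n\in H^2$ only, which is borderline for applying Proposition~\ref{coro:paraDN1} (it needs $s>3/2$ and $\eps<s-3/2$); since $H^2\subset C^{1+\eps}$ for small $\eps$ in dimension one, the paradifferential remainder estimate does apply with $s$ slightly below $2$ and a correspondingly small gain $\eps>0$, which is all that is needed. This requires stating the argument with constants depending only on $\|\eta_n\|_{H^2}$, which is legitimate by the uniform $H^2$ bound.
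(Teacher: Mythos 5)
Your proposal is correct in its essential ideas and closely parallels the paper's, but the execution is organized differently and is worth contrasting. Both arguments rest on the same two ingredients: the paralinearization $G(\eta_n)=\la D_x\ra+R(\eta_n)$ from Proposition~\ref{coro:paraDN1} and the band-limitation $\psi_n=J_n\psi_n$, which allows one to drop the sharp cutoff $J_n$ because $\la D_x\ra$ is a Fourier multiplier commuting with $J_n$. You package this as a quadratic energy-type identity, pairing $\partial_t\eta_n=J_nG(\eta_n)\psi_n$ against $\la D_x\ra\psi_n$ so that $\|\psi_n\|_{\dot H^1}^2$ appears directly on the left, and the remainder $\int R(\eta_n)\psi_n\,\la D_x\ra\psi_n$ is absorbed by interpolation and Young's inequality; this takes one step. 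The paper instead makes the cancellation explicit as $(I-J_n)G(\eta_n)\psi_n=(I-J_n)R(\eta_n)\psi_n$ and runs a two-step linear bootstrap, improving $\psi_n$ from $H^{1/2}$ to $H^{3/4}$ to $H^{1}$ by using Proposition~\ref{coro:paraDN1} twice with $\eps=1/4$. The reason the paper needs two steps is that each application gains only $\eps<s-3/2=1/2$ derivatives at the uniform regularity $s=2$ (the constraint is strict), so a single linear pass from $H^{1/2}$ to $H^1$ is out of reach; your quadratic version sidesteps this by absorbing rather than iterating, which is a genuine (if modest) simplification.

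One index error needs fixing: when you estimate $\lA R(\eta_n)\psi_n\rA_{L^2}$ via Proposition~\ref{coro:paraDN1} you need $\sigma-1+\eps=0$, i.e.\ $\sigma=1-\eps$, giving $\lA R(\eta_n)\psi_n\rA_{L^2}\lesssim \mathcal F(\lA\eta_n\rA_{H^2})\lA\psi_n\rA_{H^{1-\eps}}$, not $\lA\psi_n\rA_{H^{3/2-\eps}}$. With $\eps\in(0,1/2)$ the exponent $1-\eps$ lies strictly between $1/2$ and $1$, so the interpolation $\lA\psi_n\rA_{H^{1-\eps}}\le\lA\psi_n\rA_{H^{1/2}}^{2\eps}\lA\psi_n\rA_{H^{1}}^{1-2\eps}$ is the right one (whereas $3/2-\eps>1$ would sit \emph{above} $H^1$ and could not be absorbed). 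After this correction, the closing inequality $\lA\psi_n\rA_{\dot H^1}\le \lA\partial_t\eta_n\rA_{L^2}+C\lA\psi_n\rA_{H^{1/2}}^{2\eps}\lA\psi_n\rA_{\dot H^1}^{1-2\eps}$ does close by Young, since $\lA\partial_t\eta_n\rA_{L^2}$ and $\lA\psi_n\rA_{H^{1/2}}$ are controlled by the conserved energy and Lemma~\ref{L02}, and all constants depend only on the uniformly bounded $\lA\eta_n\rA_{H^2}$.
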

\begin{proof}
The main idea is to use 
the Rellich inequality, that is the following estimate (see~\cite{McLean,Necas}):
\begin{equation}
\label{Rellich}
\Vert \partial_{x}\psi\Vert_{L^2}\leq C(\Vert \partial_{x}\eta\Vert_{L^\infty})\Vert G(\eta)\psi\Vert_{L^2}.
\end{equation}
Indeed, in light of \e{Rellich}, 
the conservation of the mean value of $\psi_n$
(see~\eqref{mean0}), and the uniform control of the Lipschitz norm of $\eta_n$ (see~\eqref{N02}), we see that to prove \e{N05i}, it suffices to 
establish the inequality
\begin{equation}\label{N05}
\sup_{n\in\mathbb{N}}\lA G(\eta_n)\psi_n \rA_{L_t^{\infty}L_x^{2}}<+\infty.
\end{equation}
Recall that the first equation in \eqref{A1} takes the form
$
\partial_t\eta_n=J_nG(\eta_n)\psi_n
$,
and thus, directly from the conservation of energy (see~\eqref{N00}), we obtain
\begin{equation}\label{EQ06.5}
\sup_{n\in\mathbb{N}}\lA J_n G(\eta_n)\psi_n \rA_{L_t^{\infty}L_x^{2}}<+\infty.
\end{equation}
Consequently, to achieve the desired uniform estimate \eqref{N05}, it is sufficient to prove that
\begin{equation}\label{N11}
\sup_{n\in\mathbb{N}}\lA (I-J_n)G(\eta_n)\psi_n \rA_{L_t^{\infty}L_x^{2}}<+\infty.
\end{equation}
We aim to establish \eqref{N11} by leveraging the fact that $(I-J_n)\psi_n=0$ by construction. This requires commuting $I-J_n$ with $G(\eta_n)$, which presents two challenges: The projection $I-J_n$ is a Fourier multiplier with a non-smooth symbol, and $G(\eta_n)$ is determined by solving a boundary value problem. To address these difficulties, we turn to the paradifferential analysis of the Dirichlet-to-Neumann operator. Specifically, we utilize Proposition \ref{coro:paraDN1}, which allows a comparison between $G(\eta_n)$ and $G(0)=\la D_x \ra$.
Recall that this proposition asserts that the remainder term
\begin{equation*}
R(\eta_n):=G(\eta)\psi-\la D_x\ra \psi,
\end{equation*}
satisfies
\begin{equation}\label{N12}
    \left\| R(h)f \right\|_{H^{\sigma-1+\epsilon}}\le \mathcal{F}\left(\| h \|_{H^{s}}\right)\left\| f \right\|_{H^{\sigma}},
\end{equation}
provided 
\begin{equation*}
s>\frac{3}{2},\quad \frac{1}{2}\leq \sigma \leq s-\frac{1}{2},\quad 
0<\epsilon\leq \frac{1}{2}, \quad \epsilon< s-\frac{3}{2}.
\end{equation*}
Now, write
\begin{equation*}
(I-J_n) G(\eta_n)\psi_n=(I-J_n)\la D_x\ra \psi_n+(I-J_n)R(\eta_n)\psi_n
,
\end{equation*}
and observe that, since the operators $I-J_n$ and $\la D_x \ra$ are Fourier multipliers, they commute, leading to
\begin{equation*}
(I-J_n)\la D_x\ra \psi_n=0.
\end{equation*}
Thus, we arrive at
\be\label{N13}
(I-J_n) G(\eta_n)\psi_n=(I-J_n)R(\eta_n)\psi_n.
\ee
We will exploit this relation to infer \eqref{N11} through a two-step argument. First, we will establish a uniform bound in $L^\infty_t(H^{-\epsilon}_x)$ for some~$\epsilon>0$. Subsequently, we will employ a bootstrap argument to derive \eqref{N11} from this intermediate step.

We begin by using~\e{N12} 
with
\begin{equation*}
s=2,\quad \sigma=\mez,\quad \eps=\frac{1}{4}.
\end{equation*}
Then $\sigma-1+\eps=-1/4$ and we have
\begin{equation*}
\lA (I-J_n)R(\eta_n)\psi_n\rA_{H^{-\frac{1}{4}}}\le 
\lA R(\eta_n)\psi_n\rA_{H^{-\frac{1}{4}}}\le 
C(\lA \eta_n\rA_{H^2})\lA \psi_n\rA_{H^{\mez}}.
\end{equation*}
In particular, using~\e{N04}, we get
\begin{equation}\label{N15}
\sup_{n\in\mathbb{N}}\Big(\lA (I-J_n)R(\eta_n)\psi_n\rA_{L_t^\infty H_x^{-\frac{1}{4}}}+ 
\lA R(\eta_n)\psi_n\rA_{L_t^\infty H_x^{-\frac{1}{4}}}\Big)<+\infty.
\end{equation}
Now, remembering \e{EQ06.5} and \e{N13}, we deduce from 
the previous estimate that
\begin{equation}\label{N14}
\sup_{n\in\mathbb{N}}\lA G(\eta_n)\psi_n \rA_{L_t^{\infty}H_x^{-\frac{1}{4}}}<+\infty.
\end{equation}
Then, by writing 
\begin{equation*}
\la D_x\ra\psi_n=G(\eta_n)\psi_n-R(\eta_n)\psi_n,
\end{equation*}
we see that \e{N15} and \e{N14} imply 
\begin{equation*}
\sup_{n\in\mathbb{N}}\lA \la D_x\ra\psi_n \rA_{L_t^{\infty}H_x^{-\frac{1}{4}}}<+\infty.
\end{equation*}
Using again~\e{mean0}, we infer that
\begin{equation}\label{N17}
\sup_{n\in\mathbb{N}}\lA \psi_n \rA_{L_t^{\infty}H_x^{\frac{3}{4}}}<+\infty.
\end{equation}

Next, we repeat the previous arguments, but now 
using the uniform control of the $L^\infty_t(H^{3/4}_x)$ norm from \e{N17} instead of the bound~\e{N04}. 
In this second step, we use the estimate~\e{N12} 
with
\begin{equation*}
s=2,\quad \sigma=\frac{3}{4},\quad \eps=\frac{1}{4}.
\end{equation*}
Then $\sigma-1+\eps=0$, and we have
\begin{equation*}
\lA (I-J_n)R(\eta_n)\psi_n\rA_{L^2}\le 
\lA R(\eta_n)\psi_n\rA_{L^2}\le 
C(\lA \eta_n\rA_{H^2})\lA \psi_n\rA_{H^{\frac{3}{4}}}.
\end{equation*}
Then, in parallel to what we have done above, we successively deduce that
\begin{equation*}
\sup_{n\in\mathbb{N}}\Big(\lA (I-J_n)R(\eta_n)\psi_n\rA_{L_t^\infty L_x^2}+ 
\lA R(\eta_n)\psi_n\rA_{L_t^\infty L_x^2}\Big)<+\infty,
\end{equation*}
and
\begin{equation}\label{N0202}
\sup_{n\in\mathbb{N}}\lA G(\eta_n)\psi_n \rA_{L^{\infty}L^2}<+\infty.
\end{equation}
Again, by writing 
$\la D\ra\psi_n=G(\eta_n)\psi_n-R(\eta_n)\psi_n$, 
we infer that
\begin{equation}\llabel{EQ100}
\sup_{n\in\mathbb{N}}\lA \la D_x\ra\psi_n \rA_{L^{\infty}L^2}<+\infty,
\end{equation}
which in turn implies the wanted result~\e{N05},
completing the proof.
\end{proof}

\begin{rema}
Although we were guided by the Rellich estimate~\eqref{Rellich}, we did not use it directly. Instead, we relied on~\e{N12} which is a more precise version of~\e{Rellich}, valid when $\eta$ is smoother than Lipschitz.
\end{rema}
Now, set 
\begin{equation*}
B_n\defn \frac{
       G(\eta_n) \psi_n + \partial_{x}\eta_n \partial_{x}\psi_n
         }{
      1+(\partial_x \eta_n)^2
     }
\end{equation*}
and
\begin{equation*}
   V_n\defn  \partial_{x}\psi_n - B_n \partial_{x}\eta_n.
\end{equation*}
The previous 
estimates~\e{N02},~\e{N05i}, and 
\e{N0202} imply directly that
  \begin{equation} \label{EQ12}
\sup_{n\ge 1}\big(\lA B_n\rA_{L_t^{\infty}L_x^{2}}+\lA V_n\rA_{L_t^{\infty}L_x^{2}}\big)<+\infty.
\ee
Now, in order to get some compactness through the Arzela--Ascoli theorem, we need information on time derivatives. As we will see, it is straightforward to bound $\partial_t\eta_n$, while, by contrast, it is much more difficult to estimate $\partial_t\psi_n$. To overcome this problem, we will proceed by an indirect argument: First, we will estimate the time derivative of the unknown $u_n\defn \psi_n+J_nG(\eta_n)\psi_n$. This will give some compactness on $(u_n)$. By combining this with the compactness of $(\eta_n)$, we will deduce some compactness for $(\psi_n)$ as well. 

\begin{lemm}
The following uniform estimates hold:
 \begin{align}
&\sup_{n\ge 1}\lA
   \partial_{t}\eta_n\rA_{L_t^\infty L_x^2}
    <+\infty,
   \label{EQ13}\\
&\sup_{n\ge 1}\lA
   \partial_{t}(\psi_n+J_nG(\eta_n)\psi_n)\rA_{L_t^{\infty}H_x^{-2}}
    <+\infty.
   \label{EQ13bis}
\end{align}
\end{lemm}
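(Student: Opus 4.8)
The bound \e{EQ13} requires essentially nothing: the first equation in \e{A1} reads $\partial_t\eta_n = J_nG(\eta_n)\psi_n$, and the right-hand side is already known to be bounded in $L_t^\infty L_x^2$ uniformly in $n$ by \e{EQ06.5}. So the content of the lemma is \e{EQ13bis}, and the plan is to exploit the special combination
\[
u_n = \psi_n + J_nG(\eta_n)\psi_n = \psi_n + \partial_t\eta_n,
\]
the second equality holding because $\theta_n = \partial_t\eta_n = J_nG(\eta_n)\psi_n$. Differentiating $u_n$ in time and using the \emph{second} equation in \e{A1} produces the clean identity
\[
\partial_t u_n = \partial_t\psi_n + \partial_t^2\eta_n = -J_n\bigl(N(\eta_n,\psi_n) + L\eta_n\bigr),
\]
in which the delicate term $\partial_t\psi_n$ has disappeared and only quantities we already control survive on the right. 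Since $J_n$ is a Fourier multiplier of norm at most $1$ on every $H^\mu(\xT)$, it then suffices to bound $N(\eta_n,\psi_n)$ and $L\eta_n$ in $H^{-2}(\xT)$, uniformly in $n$ and $t$.

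For the linear term, in dimension $d=1$ one has $L\eta_n = g\eta_n + \partial_x^4\eta_n$, and since $\|\partial_x^4 f\|_{H^{-2}} \le \|f\|_{H^2}$, the uniform bound $\sup_n\|\eta_n\|_{L_t^\infty H_x^2} < \infty$ (which is immediate from energy conservation \e{N00} and the definition \e{EQ04} of $\mathcal E$) controls $\|L\eta_n\|_{L_t^\infty H_x^{-2}}$. For the nonlinear term I would use the form \e{N=}, namely $N(\eta_n,\psi_n) = \mez|V_n|^2 - \mez B_n^2 + B_n(V_n\partial_x\eta_n)$. By \e{EQ12} the sequences $B_n$ and $V_n$ are uniformly bounded in $L_t^\infty L_x^2$, and by \e{N02} the slope $\partial_x\eta_n$ is uniformly bounded in $L_t^\infty L_x^\infty$; hence each of the three summands, and therefore $N(\eta_n,\psi_n)$ itself, is uniformly bounded in $L_t^\infty L_x^1$. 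It remains to invoke the embedding $L^1(\xT)\hookrightarrow H^{-1}(\xT)\hookrightarrow H^{-2}(\xT)$, valid in one dimension since $H^1(\xT)\hookrightarrow L^\infty(\xT)$ gives $|\langle f,\varphi\rangle|\lesssim\|f\|_{L^1}\|\varphi\|_{H^1}$ for $f\in L^1$, $\varphi\in H^1$. Combining the two pieces yields \e{EQ13bis}.

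The argument is short precisely because the hard analytic work — the improved regularity $\psi_n\in L_t^\infty H_x^1$ from \e{N05i} and the ensuing $L^2$ bounds \e{EQ12} on $B_n$ and $V_n$ — has already been carried out. The only genuine idea is the algebraic one: differentiating the shifted unknown $u_n = \psi_n + \partial_t\eta_n$ rather than $\psi_n$ produces exactly $-J_n(N + L\eta_n)$, so the awkward $\partial_t\psi_n$ never has to be estimated in isolation (this is the whole reason $u_n$ was introduced). I would also point out that the $L^1\hookrightarrow H^{-2}$ step for the nonlinearity is where $d=1$ is used for convenience — in $d=2$ one would lose an $\eps$ of regularity — which fits with the paper's switch to $d=1$ from this section onward.
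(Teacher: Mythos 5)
Your proof is correct and takes essentially the same route as the paper: \eqref{EQ13} follows from energy conservation, and for \eqref{EQ13bis} you differentiate the shifted unknown $u_n=\psi_n+J_nG(\eta_n)\psi_n$ to cancel $\partial_t\psi_n$, leaving $-J_n(N(\eta_n,\psi_n)+L\eta_n)$, which you then bound in $H^{-2}$ using the form \eqref{N=}, the uniform bounds \eqref{EQ12} and \eqref{N02}, the embedding $L^1\hookrightarrow H^{-1}\hookrightarrow H^{-2}$, and the $H^2$ bound on $\eta_n$. One small inaccuracy in your closing remark: the embedding $L^1(\xT^d)\hookrightarrow H^{-2}(\xT^d)$ actually holds for $d\le 3$, so that step is not where $d=1$ is genuinely needed---the dimensional restriction already enters upstream, in Proposition~\ref{coro:paraDN1}, which is what drives \eqref{N05i} and hence the $L^2$ bounds \eqref{EQ12} that you quote.
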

\begin{proof}
The uniform estimate~\e{EQ13} 
is obtained directly from the conservation of energy. To estimate the time derivative of 
$\psi_n+J_nG(\eta_n)\psi_n$, we use the equations for $\eta_n$ and $\psi_n$ (see~\e{A1}) to write
\begin{align*}
\partial_{t} (\psi_n+J_nG(\eta_n)\psi_n)
  =
   - J_n N(\eta_n,\psi_n)
   - J_n L \eta_n.
  \end{align*}
By Propositions~\ref{propN} and~\ref{P02}, we have
$$
N(\eta_n,\psi_n)=\frac12 V_n^2 - \frac12 B_n^2 + B_n V_n \partial_x\eta_n.
$$
Then by 
using the control of the $L^\infty_tL^\infty_x$-norm of $\partial_x\eta_n$ (see~\e{N02}), and the $L_t^\infty L_x^2$-norm of $V_n$ and $B_n$ (see~\e{EQ12}), we deduce from the embedding $L^1(\xT)\subset H^{-1}(\xT)$ that
\begin{equation*}
\sup_{n\ge 1}\lA N(\eta_n,\psi_n)\rA_{L_t^{\infty}H_x^{-1}}
\lec \sup_{n\ge 1}\lA \frac12 V_n^2 - \frac12 B_n^2 + B_n V_n \partial_{x}\eta_n\rA_{L_t^{\infty}L_x^{1}}<+\infty.
 \end{equation*}
Finally, it is immediate that
  \begin{equation*}
\sup_{n\ge 1}\lA
  J_n L(\eta_n)\rA_{L_t^\infty H_x^{-2}} \le  \sup_{n\ge 1}\lA g \eta_n + \partial_{x}^{4} \eta_n
 \rA_{L_t^{\infty}H_x^{-2}}<+\infty,
  \end{equation*}
since $(\eta_n)$ is bounded in $L_t^{\infty}H_x^{2}$.\end{proof}

\subsection{Passing to the limit.}

Introduce the sequence
$$
u_n\defn \psi_n+J_nG(\eta_n)\psi_n.
$$
We have seen in \eqref{EQ13bis}
that 
$(\partial_tu_n)$ is bounded in 
$L_t^{\infty}H_x^{-2}$. 
Also, in light of \e{ests+2}, $(u_n)$ is bounded in $L_t^\infty L_x^{2}$ since 
$(\psi_n)$ is bounded in $L_t^{\infty}H_x^{1}$ and $(\eta_n)$ is bounded in $L_t^{\infty}H_x^{2}$.

By the Banach--Alaoglu and Arzela--Ascoli theorems, it follows from the previous bounds that there exist three functions $\eta$, $u$, and $\psi$ along with a subsequence
$k=k(n)$ of $1,2,3,\ldots$ such that
  \begin{alignat*}{2}
   &
   \eta_{k} \to \eta
   \qquad &&\text{weakly-$*$ in~}
   L_t^{\infty}H_x^{2}
   \hbox{~and~}
   \text{strongly in~} C_tH_x^{2-\epsilon}
   \text{~for every $\epsilon>0$,}
  \\&
   u_{k} \to u
   \qquad  &&\text{weakly-$*$ in~}
   L_t^{\infty}L_x^{2}
   \hbox{~and~}
   \text{strongly in~} C_tH_x^{-\epsilon}
   \text{~for every $\epsilon>0$}
   .
   \\&
   \psi_{k} \to \psi
   \qquad  &&\text{weakly-$*$ in~}
   L_t^{\infty}H_x^{1}
   .
\end{alignat*}

Now, we claim that 
\be\label{N195}
\psi_{k} \to \psi
   \qquad  
   \text{strongly in~} C_tH_x^{1-\epsilon}
   \text{~for every $\epsilon>0$}
   .
\ee
Since $\psi_k-\psi$ is bounded in $L_t^\infty H_x^1$, it is sufficient to prove that 
$$
\psi_{k} \to \psi
   \qquad  
   \text{strongly in~} C_tH_x^{-1/2}
   .
$$
To prove the latter convergence, we will prove that $(\psi_k)$ is a Cauchy sequence in~$C_tH_x^{-1/2}$. To see this, we will use Lemma~\ref{L:Rn} for the operator
$$
R_n(\eta)=(I+J_nG(\eta))^{-1}.
$$
We begin by writing
\begin{equation}\label{N196}
\begin{aligned}
\psi_n-\psi_p
&=R_n(\eta_n)( I+J_nG(\eta_n))(\psi_n-\psi_p)\\
&=R_n(\eta_n)(u_n-u_p)
-R_n(\eta_n)(J_nG(\eta_n)-J_pG(\eta_p))\psi_p\\
&=R_n(\eta_n)(u_n-u_p)
-R_n(\eta_n)J_n(G(\eta_n)-G(\eta_p))\psi_p\\
&\quad-
R_n(\eta_n)(J_n-J_p)G(\eta_p)\psi_p.
\end{aligned}
\end{equation}
Then since, for any $\eps>0$,
\begin{alignat*}{2}
&(u_n) &&\text{ is a Cauchy sequence in 
$C_tH_x^{-\epsilon}$},\\
&(\eta_n) &&\text{ is a Cauchy sequence in 
$C_tH_x^{2-\epsilon}$},\\
&(\psi_n) &&\text{ is bounded in 
$C_tH_x^{1}$},\\
&(G(\eta_n)\psi_n) &&\text{ is bounded in 
$C_tL_x^{2}$},\\
&(J_n) &&\text{ is a Cauchy sequence in 
$\mathcal{L}(L^2(\xT),H^{-\eps}(\xT))$},
\end{alignat*}
we deduce from~\e{N196}, along with the estimates \e{N197} and \e{EQ33}, that $(\psi_n)$ is a Cauchy sequence in $C_t H_x^{-1/2}$. This completes the proof of the claim~\e{N195}.

Our next step is to prove that
  \begin{equation}
   \int_{-\infty}^{\eta_{k}}
   \partial_{x}\phi_{k}
   \partial_{y}\phi_{k}   
   \to
   \int_{-\infty}^{\eta}
   \partial_{x}\phi
   \partial_{y}\phi   
   \text{~strongly in~} C_t L^{1}
  ,
   \label{EQ27}
  \end{equation}
for which we use Lemma~\ref{L01}. 
More precisely, to prove \eqref{EQ27}, we introduce
  \begin{equation*}
   \tphi_{k}(x,y)
   = \phi_{k}(x,y+(\eta-\eta_{k}))
   .
  \end{equation*}
Then we have
  \begin{align*}
    \begin{split}
  &
   \int_{-\infty}^{\eta}
   \partial_{x}\phi
   \partial_{y}\phi   
   -
    \int_{-\infty}^{\eta_{k}}
   \partial_{x}\phi_{k}
   \partial_{y}\phi_{k}
  \\&\indeq
  = \int_{-\infty}^{\eta}
   \partial_{x}\phi
   \partial_{y}\phi   
   -
    \int_{-\infty}^{\eta}
    \bigl(
     \partial_{x}\tphi_{k}
     - \partial_{y}\tphi_{k}\partial_{x}(\eta-\eta_{k})
    \bigr)
    \partial_{y}\tphi_{k}
    \\&\indeq
    =
   \int_{-\infty}^{\eta}
   (
   \partial_{x}\phi
   \partial_{y}\phi   
   -
   \partial_{x}\tphi_k
   \partial_{y}\tphi_{k} 
   )   
   +
    \int_{-\infty}^{\eta}
     (     \partial_{y}\tphi_{k})^2
     \partial_{x}(\eta-\eta_{k})
  .
  \end{split}
   \end{align*}
Taking the $L^{1}$ norm of the difference, we then get
  \begin{align*}
    \begin{split}
   &
   \left\Vert
   \int_{-\infty}^{\eta}
   \partial_{x}\phi
   \partial_{y}\phi   
   -
    \int_{-\infty}^{\eta_{k}}
   \partial_{x}\phi_{k}
   \partial_{y}\phi_{k}
   \right\Vert_{L^1_x}
   \\&\indeq
   \leq
   \left\Vert   
      \int_{-\infty}^{\eta}
   \partial_{x}\phi
   (
   \partial_{y}\phi   
   -
   \partial_{y}\tphi_{k}   
   )   
   \right\Vert_{L^1_x}
   +
   \left\Vert   
      \int_{-\infty}^{\eta}
   \partial_{y}\tphi_{k}
   (
   \partial_{x}\phi
   -
   \partial_{x}\tphi_{k}   
   )   
   \right\Vert_{L^1_x}
   \\&\indeq\indeq
   +
   \left\Vert
       \int_{-\infty}^{\eta}
     (     \partial_{y}\tphi_{k})^2
   \right\Vert_{L^1_x}
   \Vert \eta-\eta_{k}\Vert_{W^{1,\infty}}
   \\&\indeq
   \leq
   \Vert   
    \partial_{x}\phi
   \Vert_{L^2(y<\eta)}
   \Vert
   \partial_{y}\phi   
   -
   \partial_{y}\tphi_{k}   
   \Vert_{L^2(y<\eta)}
   +
   \Vert   
   \partial_{y}\tphi_{k}
   \Vert_{L^2(y<\eta)}
   \Vert
   \partial_{x}\phi
   - \partial_{x}\tphi_{k}
   \Vert_{L^2(y<\eta)}
   \\&\indeq\indeq
   +
   \Vert   
   \partial_{y}\tphi_{k}
   \Vert_{L^2(y<\eta)}^2
   \Vert \eta-\eta_{k}\Vert_{W^{1,\infty}}
   .
  \end{split}
  \end{align*}

Applying \eqref{EQ41} and \eqref{EQ44} from Lemma~\ref{L01},
we get
  \begin{align*}
    \begin{split}
   &
   \left\Vert
   \int_{-\infty}^{\eta}
   (
   \partial_{x}\phi \partial_{y}\phi
     - \partial_{x}\tphi_{k} \partial_{y}\tphi_{k}
   )
   \right\Vert_{L^1_x}
   \lec
   \Vert \eta-\eta_k\Vert_{W^{1,\infty}}
   +
   \Vert \psi-\psi_k\Vert_{H^{\mez}}
   .
  \end{split}
  \end{align*}
Therefore, since $L^1(\xT)\subset H^{-1}(\xT)$, we have established
  \begin{equation*}
   \sup_{t\geq0}
   \left\Vert
   \partial_{x}
   \int_{0}^{\eta_{k}}
   \partial_{x}\phi_{k}
   \partial_{y}\phi_{k}   
   -
   \partial_{x}
   \int_{0}^{\eta}
   \partial_{x}\phi
   \partial_{y}\phi   
   \right\Vert_{H^{-2}}
   \to 0
   ,
     \end{equation*}
i.e.,
$
\sup_{t\geq0}
   \Vert N_k-N\Vert_{H^{-2}} \to 0
$,
as $k\to\infty$, where $N=\partial_{x} \int_{0}^{\eta}\partial_{x}\phi\partial_{y}\phi$.
Note that the only other nonlinear term in the system is
$G(\eta_{k})\psi_{k}$, for which we write, for a fixed~$t$,
  \begin{align}
  \begin{split}
   &
   \Vert
   G(\eta_{k})\psi_{k}
   -   G(\eta)\psi
   \Vert_{H^{-1/2}}
\\&\indeq
   \le
   \Vert
    G(\eta_{k})\psi_{k}
   -   G(\eta_{k})\psi
   \Vert_{H^{-1/2}}
   +
   \Vert
   G(\eta_{k})\psi
   -   G(\eta)\psi
   \Vert_{H^{-1/2}}
   \\&\indeq  
   \le
   C(\Vert \eta_{k}\Vert_{W^{1,\infty}})
   \Vert \psi_{k}-\psi\Vert_{H^{1/2}}
   +
   C(\lA (\eta_k,\eta)\rA_{W^{1,\infty}})
   \Vert
   \eta_{k}
   -   \eta
   \Vert_{W^{1,\infty}}
   \Vert \psi\Vert_{H^{1/2}}
   \\&\indeq
   \lec  
   \Vert \psi_{k}-\psi\Vert_{H^{1/2}}
   +
   \Vert
   \eta_{k}
   -   \eta
   \Vert_{W^{1,\infty}}
   ,
  \end{split}
   \label{EQ34}
  \end{align}
where we used
Propositions~\ref{P:DN2}
and~\ref{P01} in the second inequality.
Since the last expression in \eqref{EQ34} converges
to~$0$ uniformly in $t$, we get
  \begin{equation*}
   \sup_{t\geq0}
   \Vert
   G(\eta_{k})\psi_{k}
   -
   G(\eta)\psi
   \Vert_{H^{-1/2}}
   \to
   0
   .
  \end{equation*}

Passing then to the limit in the 
system \eqref{A1} and using the convergence facts above, we obtain
that
the limit $(\eta,\psi)$ satisfies the system~\eqref{EQ28}.

\colb
\section*{Acknowledgments}
T.A.~was supported in part by the French ANR project BOURGEONS, while
I.K.~was supported in part by the
NSF grant DMS-2205493.
The research was performed during the program ``Mathematical Problems in Fluid Dynamics, Part~2,'' held during the summer of 2023 by the Simons Laufer Mathematical Sciences Institute (SLMath), which is supported by the National Science Foundation (Grant No.~DMS-1928930).

\end{document}